\newtheorem{theorem}{Theorem}[section]
\newtheorem{lemma}[theorem]{Lemma}
\newtheorem{definition}[theorem]{Definition}
\newtheorem{prop}[theorem]{Proposition}
\theoremstyle{remark}
\newtheorem{remark}[theorem]{Remark}
\newcommand{\eps}{\varepsilon}
\newcommand{\norm}[1]{\lVert#1\rVert} 
\DeclareMathOperator*{\esssup}{ess.\,sup} 
\DeclareMathOperator*{\supp}{supp} 
\newcommand*{\R}{\ensuremath{\mathbb{R}}}
\newcommand*{\N}{\ensuremath{\mathbb{N}}}
\newcommand*{\diam}{\ensuremath{\mathrm{diam\,}}}
\def\Xint#1{\mathchoice
{\XXint\displaystyle\textstyle{#1}}%
{\XXint\textstyle\scriptstyle{#1}}%
{\XXint\scriptstyle\scriptscriptstyle{#1}}%
{\XXint\scriptscriptstyle\scriptscriptstyle{#1}}%
\!\int}
\def\XXint#1#2#3{{\setbox0=\hbox{$#1{#2#3}{\int}$ }
\vcenter{\hbox{$#2#3$ }}\kern-.6\wd0}}
\def\dashint{\Xint-}
\newcommand{\mean}[1]{\,-\hskip-1.08em\int_{#1}}
\renewcommand{\mean}[1]{\dashint_{#1}}
\let\div\undefined\DeclareMathOperator{\div}{div} 
\author[Colombo]{Maria Colombo
}
\address{EPFL SB, Station 8, 
CH-1015 Lausanne, Switzerland
}
\email{maria.colombo@epfl.ch}
\author[Haffter]{Silja Haffter
}
\address{EPFL SB, Station 8, 
CH-1015 Lausanne, Switzerland
}
\email{silja.haffter@epfl.ch}
\title[Improved Hausdorff dimension estimate of the singular set of the SQG equation]{Improved Hausdorff dimension estimate of the singular set of the supercritical surface quasigeostrophic equation}
\begin{document}
\begin{abstract}  We prove that the spacetime singular set of any suitable Leray-Hopf solution of the surface quasigeostrophic equation with fractional dissipation of order $0< \alpha < \frac{1}{2}$ has Hausdorff dimension at most $\frac{1}{2\alpha^2}\,.$   This result improves previously known dimension estimate established in \cite{CH2021} and builds on the excess decay result and the control on the particle flow already developed there.
The improvement lies in the initial iteration of the local energy inequality 
in analogy with the celebrated result of Caffarelli-Kohn-Nirenberg \cite{CaffarelliKohnNirenberg1982} for the Navier-Stokes equations.
\end{abstract}

\maketitle

\section{Introduction}

For $\alpha \in (0, \frac{1}{2}]$, we consider the following fractional drift-diffusion equation
\begin{equation}\label{eq:SQG}
\begin{cases}
\partial_t \theta + u \cdot \nabla \theta = - (-\Delta)^\alpha \theta \\
\div u =  0 \, ,
\end{cases}
\end{equation}
where $\theta: \R^2 \times [0, +\infty) \rightarrow \R$ is an active scalar, $u: \R^2 \times [0, +\infty) \rightarrow \R^2$ is the velocity field and $(-\Delta)^\alpha$ corresponds to the Fourier multiplier with symbol $\lvert \xi \rvert^{2\alpha}$. The system is usually complemented with the initial condition 
\begin{equation}\label{eq:Cauchy}
\theta (\cdot, 0) = \theta_0\, .
\end{equation}
We will be particularly interested in the surface quasigeostrophic (SQG) equation where the velocity field $u$ is determined from $\theta$ by the Riesz-transforms $\mathcal{R}$ on $\R^2$
\begin{equation}\label{eq:u}
 u = \nabla^\perp (-\Delta)^{-\frac{1}{2}} \theta = \mathcal{R}^\perp \theta\,.
\end{equation}
The smoothness and long time behaviour of solutions of  \eqref{eq:SQG}--\eqref{eq:u} is known for $\alpha \geq \frac{1}{2}$ due to the series of works \cite{KiselevNazarovVolberg2007,CaffarelliVasseur2010,ConstantinVicol2012,ConstantinTarfuleaVicol2015,ConstantinCotiZelatiVicol2016}. This range corresponds to the $L^\infty$-subcritical and -critical regime with respect to the natural scaling invariance of the equation given by
\begin{equation}\label{eq:scaling}
(\theta, u) \mapsto \big(\theta_r(x,t):=  r^{2\alpha-1} \theta(rx, r^{2\alpha}t), u_r(x,t):= r^{2\alpha-1} u(rx, r^{2\alpha} t) \big) \,.
\end{equation}
In the supercritical range $\alpha < \frac{1}{2}\,,$ it is still open whether or not starting from smooth data remain smooth forever. Partial results in a positive directions include the conditional regularity results of \cite{ConstantinWu2008,ConstantinWu2009} assuming a certain (sub-)critical spatial H\"older regularity of the velocity field as well as the partial regularity result of \cite{CH2021}, which established the almost everywhere smoothness of suitable Leray-Hopf weak solutions as a consequence of a non-trivial dimension estimate on the spacetime singular set. On the negative side, the non-uniqueness of a class of (very) weak solutions for the system \eqref{eq:SQG}--\eqref{eq:u}, even for subcritical dissipations, has been demonstrated in \cite{BuckmasterShkollerVicol2019}.

The goal of this note is to give a an estimate of the dimension of the spacetime singular set ${\rm Sing }\, \theta$ of a suitable Leray-Hopf solution (see Definition \ref{def:LH} below), which consists of all points in spacetime such that $\theta$ is not smooth in any neighbourhood.

\begin{theorem}\label{thm:main} Let $\alpha \in (\frac{1}{\sqrt 6}, \frac 12)$ and $(\theta, u)$ be a suitable Leray-Hopf solution  of \eqref{eq:SQG}--\eqref{eq:u} on $\R^2 \times (0, + \infty)\,.$ Then the Hausdorff-dimension of the spacetime singular set ${\rm Sing} \, \theta$ satisfies 
$$\dim_\mathcal{H}({\rm Sing} \, \theta) \leq \frac{1}{2\alpha^2}\,.$$
\end{theorem}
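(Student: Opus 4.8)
The plan is to run an $\varepsilon$-regularity and covering argument of Caffarelli--Kohn--Nirenberg type, adapted to the anisotropic parabolic cylinders $Q_r(z_0):=B_r(x_0)\times(t_0-r^{2\alpha},t_0)$ dictated by the scaling \eqref{eq:scaling}, using as black boxes the two central estimates of \cite{CH2021}: the excess-decay estimate and the control of the particle (Lagrangian) flow of $u$. The first ingredient I would isolate is a clean $\varepsilon$-regularity criterion. Recalling from \cite{CH2021} the dimensionless excess $\mathbb{E}(z_0,r)$ — invariant under \eqref{eq:scaling} and measuring the oscillation of $(\theta,u)$ relative to a constant state on $Q_r(z_0)$ — the excess-decay estimate yields $\mathbb{E}(z_0,\rho r)\le C\rho^{2\beta}\mathbb{E}(z_0,r)+(\text{lower order})$ once $\mathbb{E}(z_0,r)$ lies below a universal threshold, the change of variables along characteristics in its proof being made licit by the particle-flow control. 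Iterating along a geometric sequence of radii, one obtains a universal $\varepsilon_\ast>0$ such that $\mathbb{E}(z_0,r)\le\varepsilon_\ast$ for some $r>0$ implies $\theta\in C^\beta$ near $z_0$, hence, by parabolic bootstrap for \eqref{eq:SQG}--\eqref{eq:u}, smooth near $z_0$; in particular $z_0\in{\rm Sing}\,\theta$ forces $\mathbb{E}(z_0,r)>\varepsilon_\ast$ for all $r$.

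The heart of the matter, and the source of the improvement over \cite{CH2021}, is the \emph{initial iteration}: I would show that the criterion above is already triggered by a much weaker quantity, built from a globally integrable density and carrying the smallest scaling dimension compatible with the supercritical drift. Concretely, starting from the local energy inequality of a suitable solution on $Q_r(z_0)$ — with the non-local term $(-\Delta)^\alpha\theta$ split into a piece supported near $B_r$ and a tail absorbed into the energy on concentric cylinders — I would test against cut-offs adapted to $Q_r(z_0)$, estimate the drift contribution $\int u\cdot\nabla\varphi\,|\theta|^2$ by combining the Calder\'on--Zygmund bound $\|u\|_{L^p}\lesssim\|\theta\|_{L^p}$ with the particle-flow control (the only step where supercriticality genuinely bites: a plain interpolation between $L^\infty_t L^2_x$ and $L^2_t\,\dot{H}^\alpha_x$ would not close), and iterate the resulting inequality once. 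In analogy with the passage from the $L^3$-criterion to the $|\nabla u|^2$-criterion in \cite{CaffarelliKohnNirenberg1982}, the outcome should be a constant $\varepsilon_1>0$ such that, schematically,
\begin{equation*}
\limsup_{r\to0}\ r^{-1/\alpha}\!\!\int_{Q_r(z_0)}\!\!|(-\Delta)^{\alpha/2}\theta|^2\,dx\,dt\ \le\ \varepsilon_1 \quad\Longrightarrow\quad \mathbb{E}(z_0,r)\le\varepsilon_\ast\ \text{for some } r>0,
\end{equation*}
the exponent $1/\alpha$ being the critical one for which the iteration still closes; the gain is that the controlling density is now the dissipation $|(-\Delta)^{\alpha/2}\theta|^2$ itself rather than a cruder Lebesgue norm of $\theta$.

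The last step is the covering/measure estimate. By the two implications above, ${\rm Sing}\,\theta$ is contained in the set of points $z_0$ for which $\limsup_{r\to0}r^{-1/\alpha}\int_{Q_r(z_0)}|(-\Delta)^{\alpha/2}\theta|^2\,dx\,dt>\varepsilon_1$. For any $\eta>0$ each such point carries arbitrarily small radii $r<\eta$ with $\int_{Q_r(z_0)}|(-\Delta)^{\alpha/2}\theta|^2\ge\varepsilon_1 r^{1/\alpha}$, so a Vitali argument for the family $\{Q_r\}$ extracts countably many disjoint cylinders $Q_{r_i}(z_i)$ with ${\rm Sing}\,\theta\subseteq\bigcup_i 5Q_{r_i}(z_i)$ and $\sum_i r_i^{1/\alpha}\le\varepsilon_1^{-1}\int_{\bigcup_i Q_{r_i}}|(-\Delta)^{\alpha/2}\theta|^2$. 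Since $|(-\Delta)^{\alpha/2}\theta|^2\in L^1(\R^2\times(0,\infty))$ by the global energy inequality and $\big|\bigcup_i Q_{r_i}\big|\to0$ as $\eta\to0$, the right-hand side vanishes in the limit; covering each $5Q_{r_i}(z_i)$ by a single Euclidean ball of radius comparable to its time-extent $r_i^{2\alpha}$ (legitimate since $r_i<1$) gives $\sum_i (r_i^{2\alpha})^{1/(2\alpha^2)}=\sum_i r_i^{1/\alpha}\to0$, hence $\mathcal{H}^{1/(2\alpha^2)}({\rm Sing}\,\theta)=0$ and $\dim_\mathcal{H}({\rm Sing}\,\theta)\le\frac{1}{2\alpha^2}$. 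The hypothesis $\alpha>\tfrac{1}{\sqrt6}$ is precisely what makes this exponent strictly less than $3=\dim(\R^2\times\R)$ (and, incidentally, ensures $1/\alpha<2+2\alpha$, so that the dimensionless quantity above tends to $0$ at Lebesgue points), so that the estimate is non-vacuous.

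I expect the initial iteration to be the main obstacle. Absorbing the transport term of the local energy inequality in the supercritical regime requires feeding in the particle-flow estimate of \cite{CH2021} in a way that remains compatible with iterating the inequality once, and doing so while tracking the powers of $r$ sharply enough that the final exponent does not degrade below $1/\alpha$; the non-locality of $(-\Delta)^\alpha$, which compels one to carry the tail terms along at every scale of the iteration, is an additional, and somewhat delicate, bookkeeping burden.
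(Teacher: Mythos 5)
Your overall strategy is the right one and matches the paper's: reduce to an $\varepsilon$-regularity criterion whose smallness requirement involves only the globally controlled fractional dissipation, then run a CKN-style Vitali covering. The arithmetic in your last paragraph is sound — for $\alpha<\tfrac12$ the time-extent $r^{2\alpha}$ dominates the parabolic diameter, $(r^{2\alpha})^{1/(2\alpha^2)}=r^{1/\alpha}$ is the summable quantity, and the exponent $\tfrac{1}{2\alpha^2}$ falls out exactly as it should.

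The substantive gap lies in the middle step, precisely where you anticipate trouble. You formulate the $\varepsilon$-regularity criterion as smallness of $\limsup_{r\to0}r^{-1/\alpha}\int_{Q_r(z_0)}|(-\Delta)^{\alpha/2}\theta|^2$ on the \emph{standard} parabolic cylinder $Q_r(z_0)=B_r(x_0)\times(t_0-r^{2\alpha},t_0)$, but this is not what the paper's iteration can close. The obstruction is the tilting effect of the particle flow: subtracting the mean velocity to enforce $[u(s)]_{B_{r/2}}=0$ at every scale displaces the spatial center at scale $\mu^j$ by a distance of order $\mu^{(2\alpha-2/q)j}K_q(u;t,\mu^j)$ (cf.\ \eqref{eq:boundflow} and \eqref{eq:geometry2}), which for $\alpha<\tfrac12$ is far larger than the spatial radius $\mu^j$ — even in the limit $q\to\infty$. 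Consequently the dissipation that actually controls the kinetic part $\mathcal{A}(\theta_j;\cdot)$ along the iteration lives in the much wider ball $B_{K_q(u;t,r)\,r^{2\alpha-2/q}}(x_0)$, not in $B_r(x_0)$. This is exactly why Theorem~\ref{thm:running} is stated with the quantity $\mathrm{B}$ of \eqref{def:B}, whose spatial domain is $B_{K_q(u;t,r)r^{2\alpha-2/q}}(x)$, and why Lemma~\ref{lem:boundcurlybt}, which records how the flow moves the center, is an indispensable step. Smallness of the dissipation on the narrow $B_r(x_0)$ gives no information on the translated cylinders, so your criterion is a strictly stronger statement than what the methods of \cite{CH2021} deliver. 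That you nonetheless land on $\tfrac{1}{2\alpha^2}$ is a happy coincidence: the widened spatial radius $r^{2\alpha-2/q}$ and the time-extent $r^{2\alpha}$ scale the same as $q\to\infty$, so the parabolic diameter — hence the Hausdorff exponent — is unchanged; but the Vitali covering has to be run with balls of Euclidean radius $\sim r_i^{2\alpha-2/q}$, as in Section~\ref{s:covering}, and not with the narrow cylinders $Q_{r_i}$.
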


Theorem~\ref{thm:main} is an improvement of both the previously known bound  $\frac{1}{2\alpha}(\frac{1+\alpha}{\alpha}(1-2\alpha) +2 )$ of \cite{CH2021} on the dimension of the singular set  and of the range of $\alpha$ for which such bound is nontrivial. Indeed, from Theorem \ref{thm:main} it follows that for fractional orders above $\alpha_0= 1/\sqrt{6}\,,$  suitable Leray-Hopf solutions of \eqref{eq:SQG}--\eqref{eq:u}  are smooth almost everywhere on $\R^2 \times (0, + \infty)\,,$ whereas previously this was only known to hold true above $\alpha_0= (1+\sqrt{33})/16$. 


\medskip

As typical in partial regularity results, the proof of Theorem \ref{thm:main} is, with a covering argument, reduced to an $\varepsilon$-regularity result (see Theorem \ref{thm:running} below) and as a consequence, its dimension estimate is intimately linked to the scaling of the \emph{integral} quantity present in the smallness requirement of such $\varepsilon$-regularity result. The improvement with respect to the dimension estimate of \cite{CH2021} relies therefore on proving a $\varepsilon$-regularity theorem which involves only better scaling \emph{integral} quantities;  in case of the SQG equation, there is a vast choice of such \emph{globally controlled}  quantities thanks to the maximum principle and the global energy inequality. 

Our improved $\varepsilon$-Theorem \ref{thm:running} below is much inspired by the celebrated Theorem of Caffarelli-Kohn-Nirenberg for the classical Navier-Stokes equations \cite{CaffarelliKohnNirenberg1982} and  the hyperdissipative Navier-Stokes equation in \cite{CDLM17}: it is based on the simple observation that by interpolation, the smallness of the local part of the excess of \cite{CH2021} can be reached by requiring only the smallness of the kinetic and the dissipative part of a certain nonlinear local energy, both of which have a better scaling than the interpolated spacetime norm employed in \cite{CH2021}. The kinetic part though is not of integral type and cannot be used in a covering argument, whereas the non-local part of the excess, generated both by the tails of the fractional Laplacian and the Riesz-transform, has a worse scaling. To circumvent both problems, we make a crucial, iterative use of the local energy inequality à la Caffarelli-Kohn-Nirenberg which guarantees the asymptotic smallness of both the kinetic part of the nonlinear local energy (playing the same role as the local kinetic energy in Navier-Stokes) as well as the tails of $\theta$ (corresponding to the non-local part of the pressure in Navier-Stokes). In light of the $L^\infty$-supercritical nature of the SQG equation, this procedure encounters the additional difficulty that one needs to require that the velocity field has zero space average on $B_r(x)$ for all centers $x \in B_r$ and for sequence of scales $r \to 0\,.$ This can be achieved by iteratively following the particle flow generated by the mean of the velocity $u$, a construction which has been used previously for the critical equation and in conditional regularity results in \cite{CaffarelliVasseur2010,ConstantinWu2009, CH2021}. 

In contrast to the Navier-Stokes equations, we cannot rely on the classical $L^2$-based local energy inequality, but rather on a family of nonlinear $L^q$-based local energy inequalities (obtained, formally, by multiplying \eqref{eq:SQG} by $\lvert \theta \rvert^{q-2} \theta$). For $q > \frac{1}{\alpha}$, this local energy inequality has compactness  (see \cite{CH2021} for a complete discussion) which manifests itself in the possibility to carry out a a linearization argument leading to the excess decay result of  \cite{CH2021} (recalled in Proposition \ref{prop:excessdecay}  below) and to perform the aforementioned iteration argument. This restriction is responsible for not reaching the dimension bound $\frac{1}{2\alpha}(4-4\alpha)$ which is conjectured to be optimal in \cite{CH2021}; we believe that one should change the linearization in order to be able to exploit the classical $L^2$-based local energy inequality in an excess decay argument. Moreover, since the our $\varepsilon$-regularity Theorem \ref{thm:running} below requires a certain scaling-invariant smallness at all scales, the covering argument does not allow to estimate the box-counting dimension of the singular set and we therefore leave open whether the box-counting dimension estimate of \cite{CH2021} can be improved too. The same issue is observed for related results of \cite{CaffarelliKohnNirenberg1982,CDLM17} on the Navier-Stokes equations and has been the subject of many works \cite{RobinsonSadowski07,Kukavica09, KukavicaPei12,KohYang16,MR4030399, MR4030593}.

Since the overall strategy is easy to follow, but to prove the single steps several technicalities enter the game, we decided to present the flow of ideas and to introduce the relevant quantities in the next section, leaving the detailed proofs to the remaining sections.

\section{Strategy of the proof}\label{s:strategy}
\subsection{The improved $\eps$-regularity theorem} 
We introduce the scaling-invariant quantity
\begin{equation}\label{def:B}
\mathrm{B}(\theta; x, t,r ) := \frac{\norm{\theta}_{L^\infty(\R^2 \times [t-r^{2\alpha}, t+r^{2\alpha}])}^{\frac{p}{1+\alpha}-2}}{r^{\frac{p}{1+\alpha}(1-2\alpha)+2}} \int_{t-r^{2\alpha}}^{t+r^{2\alpha}} \int_0^r \int_{B_{K_q(u;t,r) r^{2\alpha - 2/q}}(x)} y^b \lvert \overline \nabla \theta^* \rvert^2 \, dz \, dy \, ds \,, 
\end{equation}
where  $\theta^*$ is the Caffarelli-Silvestre extension of $\theta$ (see Theorem \ref{thm:CF}), $q\geq 20$ should be thought to be arbitrarily large, $p> \frac{1+\alpha}{\alpha}$ is fixed and
\begin{equation}\label{def:Kq}
K_q(u;t,r):= 2 \max \{ \norm{u}_{L^\infty([t-r^{2\alpha}, t+r^{2\alpha}], L^q(\R^2))}, r^{1-2\alpha+2/q} \}\,.
\end{equation}
The quantity $\mathrm{B}$ is natural in that it involves two globally controlled quantities, the $L^\infty$-norm and the $L^2((0, \infty),W^{\alpha,2})$-norm of $\theta$. The latter is suitably localized in spacetime:  it is integrated not on a standard parabolic cylinder, but on a cylinder which, for $q= \infty$, shrinks equally fast in time and in space. This is a fundamental point related to the tilting effect of the particle flow introduced in Section \ref{s:iteration} below. 

For $\alpha \in (\frac{1}{\sqrt{6}}, \frac{1}{2})$, $p$ close to $\frac{1+\alpha}{\alpha}$ and $q$ large, the quantity $\mathrm{B}$ can be made arbitrarily small at all sufficiently small scales $r$ for every smooth solution $(\theta, u)$ of \eqref{eq:SQG}--\eqref{eq:u}\,. Conversely, we have
\begin{theorem}\label{thm:running} Let $\alpha \in (\frac{1}{\sqrt{6}}, \frac{1}{2} )$ and $q\geq 20\,.$ Set $p:= \frac{1+\alpha}{\alpha}+ \frac{1}{q}\,.$ There exists $\delta:= \delta(\alpha, q)>0$ such that if $(\theta, u)$ is a suitable Leray-Hopf solution to \eqref{eq:SQG}--\eqref{eq:u} on $\R^2 \times (0, +\infty)$ with 
\begin{equation}\label{hp-eps-reg}
\limsup_{r \to 0} \mathrm{B}(\theta; x,t,r)\leq \delta \,,
\end{equation}
then  $(x,t)$ is a regular point.
\end{theorem}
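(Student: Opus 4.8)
The plan is to deduce Theorem~\ref{thm:running} from the excess decay result of \cite{CH2021} (Proposition~\ref{prop:excessdecay}) by showing that the smallness of $\mathrm{B}$ at all small scales forces the smallness of the full excess of \cite{CH2021} at some scale, after which the linearization machinery of \cite{CH2021} applies verbatim. The excess of \cite{CH2021} splits into a local part and a non-local part; the key structural observation is that the local part is controlled by the \emph{nonlinear local energy} whose kinetic and dissipative contributions are in turn bounded, via interpolation, by $\mathrm{B}$ together with a kinetic-type quantity that is not of integral form. So the heart of the argument is an iteration scheme, in the spirit of Caffarelli--Kohn--Nirenberg \cite{CaffarelliKohnNirenberg1982} and \cite{CDLM17}, that propagates smallness of $\mathrm{B}$ (the only hypothesis we are given) into asymptotic smallness of both the kinetic part of the nonlinear local energy and the tails of $\theta$ that generate the non-local part of the excess.

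First I would set up the local energy inequality: since the $L^2$-based identity is unavailable, I multiply \eqref{eq:SQG} formally by $|\theta|^{q-2}\theta$ and integrate on shrinking cylinders, obtaining the family of $L^q$-based local energy inequalities which, for $q>\tfrac1\alpha$, enjoy the compactness discussed in \cite{CH2021}; this is where the hypothesis $q\geq 20$ and the choice $p=\tfrac{1+\alpha}{\alpha}+\tfrac1q$ enter. Next, to make the local energy inequality usable at a sequence of scales $r\to 0$, I must arrange that the velocity field has zero spatial average on $B_r(x)$ at every center in $B_r$; following the particle-flow construction of \cite{CaffarelliVasseur2010,ConstantinWu2009,CH2021}, I iteratively change coordinates along the flow of the mean velocity $\mean{B_r(x)}u$, which produces the tilted cylinders $B_{K_q(u;t,r)r^{2\alpha-2/q}}(x)$ appearing in \eqref{def:B} — the $L^q$-control $K_q$ quantifies how far this tilt can move the center. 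Then comes the iteration proper: assuming $\mathrm{B}(\theta;x,t,r_k)\leq\delta$ along a geometric sequence $r_k\to 0$, I feed the dissipative smallness into the nonlinear local energy inequality on cylinder $r_k$ to bound the kinetic part and the tails on the cylinder $r_{k+1}$ by a fixed fraction of their values on $r_k$ plus a controlled error proportional to $\delta$; iterating, the kinetic part and the tails become arbitrarily small for $k$ large. Finally, I recombine: by the interpolation inequality, smallness of the dissipative part (from $\mathrm{B}$) together with the now-small kinetic part bounds the local part of the excess, while the now-small tails bound the non-local part, so the full excess of \cite{CH2021} drops below the threshold of Proposition~\ref{prop:excessdecay} at some scale $r_k$; invoking that proposition and its consequence that small excess at one scale implies excess decay and hence Hölder regularity of $\theta$ near $(x,t)$, we conclude that $(x,t)$ is a regular point.

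The main obstacle I expect is controlling the non-local terms during the iteration. The excess of \cite{CH2021} carries a non-local contribution coming from \emph{two} sources — the tails of the fractional Laplacian (via the Caffarelli--Silvestre extension $\theta^*$) and the tails of the Riesz transform defining $u$ — and this contribution scales \emph{worse} than the interpolated spacetime norm, so it cannot simply be absorbed. The resolution, and the delicate bookkeeping step, is that the Caffarelli--Kohn--Nirenberg-type local energy inequality also yields asymptotic decay of the tails of $\theta$ (analogous to the decay of the non-local pressure in Navier--Stokes), but only if one simultaneously tracks the tilt of the cylinders: the particle-flow change of coordinates moves the centers, so at each step one must re-estimate the tails at a \emph{displaced} center, and the displacement is itself controlled only through $K_q$, which depends on the globally controlled $L^\infty([t-r^{2\alpha},t+r^{2\alpha}],L^q)$-norm of $u$. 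Closing this loop — showing that the tail decay beats the tilt-induced error at every scale, uniformly — is the technical crux; the restriction $\alpha>\tfrac1{\sqrt6}$ is exactly what makes the relevant exponents line up so that the scaling of $\mathrm{B}$ is good enough for the geometric iteration to converge.
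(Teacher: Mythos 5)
Your proposal accurately reproduces the paper's strategy: the Caffarelli--Kohn--Nirenberg-style iteration of the nonlinear local energy inequality along a particle-flow-tilted sequence of shrinking cylinders to drive both the kinetic part $\mathcal{A}$ and the tail functional $\mathcal{T}_p$ to asymptotic smallness, interpolation (Lemma~\ref{lem:interpol}) to convert this, together with the bound $\mathcal{B}\lesssim\mathrm{B}$, into smallness of the full excess of \cite{CH2021}, and finally the excess decay of Proposition~\ref{prop:excessdecay} plus Campanato's theorem to get H\"older regularity near $(x,t)$. The one minor inaccuracy is the claim that $\alpha>\tfrac1{\sqrt6}$ is what makes the geometric iteration converge: the iteration in Proposition~\ref{prop:energyiter} already works for $\alpha>\tfrac14$, and $\alpha>\tfrac1{\sqrt6}$ is instead precisely the threshold at which the final Hausdorff dimension bound $\tfrac1{2\alpha^2}$ drops below the ambient dimension $3$ and hence becomes nontrivial.
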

\begin{remark}\label{rmk:strangegeometry} Due to Calderon-Zygmund estimates and to the 
boundedness of suitable Leray-Hopf solutions of \eqref{eq:SQG} (see Theorem \ref{thm:LerayConstWu}), $K_q(u; t,r) \leq C_q \max \{ \norm{\theta_0}_{L^2} t^{-\frac{1}{2\alpha}(1-2/q)}, 1 \}$ for $r< \min\{ 1, t^{\frac1{2\alpha} }\}$ small enough (depending on $\norm{\theta_0}_{L^2}$ and $t$). Hence,  the smallness requirement \eqref{hp-eps-reg} is implied by 
\begin{equation}
\limsup_{r \to 0}  \frac{1}{r^{\frac{p}{1+\alpha}(1-2\alpha)+2}} \int_{t-r^{2\alpha}}^{t+r^{2\alpha}} \int_0^r \int_{B_{ r^{2\alpha - 2/q}}(x)} y^b \lvert \overline \nabla \theta^* \rvert^2 \, dz \, dy \, ds \leq \frac{\delta}{M}\,,
\end{equation}
where $M=M(\alpha, q, \norm{\theta_0}_{L^2}, t)\geq 1$ is uniform on bounded sets of initial data $\theta_{0}$ in $L^2$ and on times $t$ well-separated from $0\,.$
\end{remark}

Theorem \ref{thm:main} follows from Theorem~\ref{thm:running} via a standard covering argument and by taking the limit $q \to \infty$. 

\subsection{Fundamental quantities} The proof of Theorem~\ref{thm:running} is based on the excess decay result of \cite{CH2021} which is recalled in Proposition~\ref{prop:excessdecay} below.  
At difference from Proposition~\ref{prop:excessdecay}, we will not employ interpolated spacetime norms, but only the fundamental quantities which appear in the localized energy inequality and have a better scaling. In other words, we introduce the following scaling-invariant quantities
\begin{align}
\mathcal{A}(\theta; x,t, r)&:=\frac{1}{r^{\frac{p}{1+\alpha}(1-2\alpha)+2}} \esssup_{s \in [t-r^{2\alpha}, t]} \int_{B_r(x)} \lvert \theta(z,s) - (\theta)_{Q_r(x,t)}\rvert^\frac{p}{1+\alpha} \, dz \,, \label{def:A}\\
\mathcal{B}(\theta; x,t,r) &:= \frac{\norm{\theta}_{L^\infty(\R^2 \times [t-r^{2\alpha}, t])}^{\frac{p}{1+\alpha}-2}}{r^{\frac{p}{1+\alpha}(1-2\alpha)+2}} \int_{Q_r^*(x,t)} y^b \lvert \overline{\nabla} \theta^* \rvert^2 \, dz \, dy \, ds \,, 
\end{align}
which are localized on the standard parabolic cylinder \eqref{def:cylinder} (here $(\theta)_{Q_{r}(x,t)}$ denotes the spacetime average \eqref{def:averages} on such cylinder). The quantities $\mathcal{A}$ and $\mathcal{B}$ are related to the kinetic and dissipative part of the local ``nonlinear energy of order $\frac{p}{1+\alpha}$": indeed, the local energy inequality \eqref{e:g_energy_2} of order $q$ (when tested with a cut-off in spacetime of radius $r$ centred at $(x,t)$) controls on its left-hand side
\begin{equation}
\mathcal{E}^{tot}_q(\theta;x,t,r) :=
 \frac{1}{r^{q(1-2\alpha)+ 2 }} \bigg( \esssup_{s \in [t-r^{2\alpha}, t]} \int_{B_r(x)} \lvert \theta(z,s) - (\theta)_{Q_r(x,t)}\rvert^q \, dz + \int_{Q_r^*(x,t)} y^b \big\lvert \overline{\nabla} \big[ \lvert \theta^* \rvert^\frac{q}{2}  \big] \big\rvert^2 \, dz \, dy \, ds \bigg) \,.
\end{equation}
For $q= \frac{p}{1+\alpha}\,,$ the first term corresponds to $\mathcal{A}$ and the second is controlled by $\mathcal{B}\,.$ 

Formally,  neglecting for a moment also the additional difficulties introduced by the presence of spacetime (rather than space) averages of $\theta\,,$ $\mathcal{E}^{tot}_q$ controls $\lvert \theta- (\theta)_{Q_r(x,t)} \rvert^\frac{q}{2}$ locally in $L^\infty L^2 \cap L^2 W^{\alpha,2}$ and, by interpolation and Sobolev embedding, in $L^{2(1+\alpha)}\,.$ In this way, the smallness of the local part of the $L^p$-based excess of Proposition \ref{prop:excessdecay} can be reached by requiring the smallness of $\mathcal{A}$ and $\mathcal{B}$. This heuristics is made rigorous in Lemma \ref{lem:interpol}, where, to deal with the problem of the spacetime averages and to control also the nonlocal parts of the excess, we also need to require the smallness of the tail functional $\mathcal{T}_{p}$. Here we directly introduce the tail functional for general integrabilities $p' \geq 2$ (which may vary depending on the context):
\begin{align}
\label{def:Tp'}
\mathcal{T}_{p'}(\theta; x,t, r)&:= \frac{1}{r^{{p'}(1-2\alpha) + 2\alpha}} \int_{t-r^{2\alpha}}^t \sup_{ R \geq \frac{r}{4}} \Big( \frac{r}{R} \Big)^{\sigma p'} \bigg( \, \mean{B_R(x)} \lvert \theta(z,s) - [\theta(s)]_{B_r(x)} \rvert^\frac{3}{2}  \, dz\bigg)^\frac{2p'}{3} \, ds
\end{align}
with $\sigma \in (0, 2\alpha)$ suitably chosen. Whenever $(x,t)=(0,0)\,,$ we will omit specify the dependence with respect to the center $(x,t)$.

\subsection{Decay of the ``kinetic part" of the nonlinear local energy} 
We cannot guarantee the smallness of kinetic part $\mathcal{A}$ (because it is not of integral type) and the tails $\mathcal{T}_{p}$ (because they have a worse scaling) through the smallness assumption of our $\varepsilon$-regularity theorem. The key use of the local energy inequality in an iteration à la Caffarelli-Kohn-Nirenberg consists therefore in proving that once the dissipative part $\mathcal{B}$ is small on a certain parabolic cylinder, then the kinetic part $\mathcal{A}$ decays on a smaller scale up to reaching a certain smallness. 

\begin{prop}\label{prop:energyiter} Let $\alpha \in (\frac{1}{4}, \frac{1}{2} )$, $p> \frac{1+\alpha}{\alpha}$, $\sigma \in (1-2\alpha + \frac{2\alpha}{p}, 2\alpha)$ and $K \geq 2\,.$ Let $(\theta, u)$ be a suitable Leray-Hopf solution to \eqref{eq:SQG}--\eqref{eq:Cauchy} on $\R^2 \times [t-r^{2\alpha},t]$ such that $u= \mathcal{R}^\perp \theta + f$ with $f \in L^1_{loc}(\R)$ and 
\begin{equation}\label{eq:zeroaverage}
[u(s)]_{B_\frac{r}{2}(x)}=0 \quad \text{ for all } s \in [t-r^{2\alpha},t]\,.
\end{equation}
Then for every $\mu$ small enough (only depending on $\alpha, \sigma, p, K$) 
 and any $\varepsilon \in (0,1)$ 
  there exists $\delta= \delta(\mu,p, \eps, K) >0$ such that if
 \begin{equation}\label{hps:energyiter}
\mathcal{B}(\theta;x,t, r)< \delta \,,
 \end{equation}
we have
\begin{equation}\label{eq:energyiter}
\mathcal{A}(\theta; x,t,\mu r)^\alpha + \mathcal{T}_p(\theta; x,t,\mu r)^\frac{2}{p} \leq \frac{1}{K} \left( \mathcal{A}(\theta;x,t, r)^\alpha+ \mathcal{T}_p(\theta;x,t, r)^\frac{2}{p} \right) + \eps \, .
\end{equation}
\end{prop}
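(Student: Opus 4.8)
The plan is to treat \eqref{eq:energyiter} as a Caffarelli--Kohn--Nirenberg-style excess-decay iteration, where the decaying quantity is the ``kinetic plus tail'' functional $\mathcal{A}^\alpha + \mathcal{T}_p^{2/p}$, and the smallness of the dissipative part $\mathcal{B}$ on the unit-scale cylinder is the driving hypothesis. The heart of the argument is the local energy inequality of order $q=\frac{p}{1+\alpha}$, namely \eqref{e:g_energy_2}, tested with a spacetime cut-off adapted to the cylinder $Q_r(x,t)$: on the left-hand side this controls $\mathcal{E}^{tot}_q$, hence both $\mathcal{A}(\theta;x,t,r)$ and (a piece bounded by) $\mathcal{B}(\theta;x,t,r)$; on the right-hand side one gets the convective term, the dissipative cross-terms coming from the Caffarelli--Silvestre extension, the tail contributions from the nonlocal operator and the Riesz transform, and the ``boundary-in-time'' term at $s=t-r^{2\alpha}$ controlled by the average value $(\theta)_{Q_r(x,t)}$. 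The key point, exactly as in \cite{CaffarelliKohnNirenberg1982,CDLM17}, is that on the right-hand side every term that is not already small carries at least one factor of the \emph{localized} quantity $\mathcal{A}+\mathcal{B}+\mathcal{T}_p$ at scale $r$ with a favorable power, together with a factor that vanishes as $\mu\to 0$ once we re-localize from scale $r$ to scale $\mu r$; the terms that do not see that gain are precisely those controlled by $\mathcal{B}(\theta;x,t,r)<\delta$, which is why we are allowed to take $\delta$ depending on $\mu$.

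Concretely I would proceed in the following steps. First, fix the cut-off $\varphi=\varphi_{\mu r}$ supported in $Q_r^*(x,t)$ and equal to $1$ on $Q_{\mu r}^*(x,t)$, and plug $\psi=\varphi^2$ (appropriately, with the scaling weight $y^b$) into \eqref{e:g_energy_2} applied to $\theta-(\theta)_{Q_r(x,t)}$ — this is legitimate because subtracting a constant is harmless for \eqref{eq:SQG} and for the extension. Second, estimate the convective term $\int u\cdot\nabla(|\theta-c|^q)\varphi^2$: here one uses the zero-average hypothesis \eqref{eq:zeroaverage}, i.e. $[u(s)]_{B_{r/2}(x)}=0$, to gain a Poincaré factor on $u=\mathcal{R}^\perp\theta+f$ — after a Calderón--Zygmund/Riesz bound $u-[u]_{B}$ is controlled by an oscillation of $\theta$ plus a tail, so that the convective term is bounded by $C(\mu)\,\big(\mathcal{A}(\theta;x,t,r)+\mathcal{T}_p(\theta;x,t,r)\big)^{\gamma}$ for some $\gamma>1$, where $C(\mu)$ blows up but the extra exponent $\gamma>1$ combined with the smallness of $\mathcal{A}+\mathcal{T}_p$ (which we may assume without loss, since otherwise \eqref{eq:energyiter} holds trivially with the $\varepsilon$ term) beats $C(\mu)$; this is the standard CKN absorption trick. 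Third, estimate the extension cross-terms $\int y^b\,\overline\nabla\theta^*\cdot\overline\nabla(\varphi^2)\,|\theta^*|^{q-2}\theta^*$ by Young's inequality, splitting off a small multiple of $\int y^b|\overline\nabla(|\theta^*|^{q/2})|^2\varphi^2$ to be absorbed into the left-hand side, leaving a remainder of the form $C\int_{Q_r^*}y^b|\theta^*|^q|\overline\nabla\varphi|^2$, which by the extension estimate of Theorem~\ref{thm:CF} and interpolation ($L^\infty L^2\cap L^2W^{\alpha,2}\hookrightarrow L^{2(1+\alpha)}$ for $|\theta-c|^{q/2}$) is bounded by $\mathcal{B}(\theta;x,t,r)$ times a bounded factor — this is the term that forces $\delta=\delta(\mu)$ rather than an absolute constant. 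Fourth, handle the nonlocal ``tail'' terms generated by $(-\Delta)^\alpha$ and by the Riesz transform in $u$: these are estimated by $\mathcal{T}_p(\theta;x,t,r)^{2/p}$ up to a bounded factor, using the weight $(r/R)^{\sigma p}$ in \eqref{def:Tp'} and the hypothesis $\sigma\in(1-2\alpha+\frac{2\alpha}{p},2\alpha)$, which is exactly the range making the tail integrals converge with the correct scaling. Fifth, control the initial-time term by $\mathcal{A}(\theta;x,t,r)$ (it is an $L^{q}$ norm at the bottom of the cylinder, dominated by the essential supremum). Sixth, translate the resulting bound on $\mathcal{E}^{tot}_q$ at scale $\mu r$ into the bound on $\mathcal{A}(\theta;x,t,\mu r)$, and separately prove the tail decay $\mathcal{T}_p(\theta;x,t,\mu r)^{2/p}\le \frac{1}{2K}\mathcal{T}_p(\theta;x,t,r)^{2/p}+C\mathcal{A}(\theta;x,t,r)$ by splitting the supremum over $R\ge \mu r/4$ into $R\in[\mu r/4, r/4]$ (gain from the scaling factor $(\mu r/R)^{\sigma p}\le \mu^{\sigma p}$... — here more care: one writes $[\theta(s)]_{B_{\mu r}}-[\theta(s)]_{B_r}$ is an oscillation controlled by $\mathcal{A}$) and $R\ge r/4$ (directly $(\mu)^{\sigma p}$ times the scale-$r$ tail). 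Collecting, choosing $\mu$ small so that all the $\mu$-gaining constants are $\le \frac{1}{2K}$, and then $\delta=\delta(\mu,p,\varepsilon,K)$ small so that the $\mathcal{B}$-term and the leftover convective remainder are $\le \varepsilon$, yields \eqref{eq:energyiter}.

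The main obstacle, I expect, is the interplay between the convective term and the zero-average condition: because SQG is $L^\infty$-supercritical, $u$ has no better scaling than $\theta$ itself, so the naive bound on $\int u\cdot\nabla(|\theta-c|^q)\varphi^2$ does not close — one genuinely needs the Galilean normalization \eqref{eq:zeroaverage} to produce a Poincaré-type gain on $u-[u]_{B_{r/2}}$, and then to feed it through the Calderón--Zygmund bound $\|\mathcal{R}^\perp(\theta-c)\|$ in a way compatible with the nonstandard geometry of the cylinder in $\mathrm{B}$ (which in Theorem~\ref{thm:running} is shrunk anisotropically). Getting the exponents to line up — so that the convective term carries a strictly superlinear power of $\mathcal{A}+\mathcal{T}_p$ while the $\mathcal{B}$-term is merely linear and hence absorbable by $\delta(\mu)$ — is the delicate bookkeeping step, and it is also where the restriction $\alpha\in(\frac14,\frac12)$ and $q>\frac1\alpha$ (equivalently $p>\frac{1+\alpha}{\alpha}$) enters, since it is what gives the extension functional $|\theta^*|^{q/2}$ enough integrability for the interpolation/Sobolev embedding into $L^{2(1+\alpha)}$ to hold with room to spare.
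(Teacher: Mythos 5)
Your overall architecture — the order-$\frac{p}{1+\alpha}$ local energy inequality at scale $r$ with a spacetime cut-off, the zero-average hypothesis \eqref{eq:zeroaverage} feeding a Poincar\'e gain on $u=\mathcal{R}^\perp\theta+f$, a separate tail decay by splitting the supremum in $R$, and Young's inequality to close — matches the paper's. But there is a genuine gap in the mechanism you propose for absorbing the $\mu$-dependent constants, and it concerns precisely the delicate bookkeeping you flag at the end.

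You claim the convective term is bounded by $C(\mu)\,(\mathcal{A}+\mathcal{T}_p)^{\gamma}$ with $\gamma>1$ and propose to beat $C(\mu)$ by assuming ``without loss'' that $\mathcal{A}+\mathcal{T}_p$ is small. This WLOG is false: if $\mathcal{A}(\theta;x,t,r)^\alpha+\mathcal{T}_p(\theta;x,t,r)^{2/p}$ is large, the conclusion \eqref{eq:energyiter} is not trivially true — the left-hand side $\mathcal{A}(\theta;x,t,\mu r)^\alpha$ is a priori only bounded by a $\mu$-negative power of $\mathcal{A}(\theta;x,t,r)^\alpha$, so there is no free way to dominate it by $\tfrac{1}{K}(\dots)+\eps$. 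Moreover, even the superlinearity claim does not survive scrutiny if you track only $\mathcal{A}$ and $\mathcal{T}_p$: the correct exponent on $\mathcal{A}$ coming from the convective term, after H\"older and Poincar\'e, is not strictly larger than $\alpha$ (the power $\mathcal{A}$ carries on the left), so there is no room to absorb a $\mu$-blow-up by Young against $\mathcal{A}$ alone. What actually saves the paper is that \emph{every} right-hand-side term carries a strictly positive power of $\mathcal{B}$: the interpolation/Poincar\'e Lemma \ref{lem:interpol} (together with Lemma \ref{lem:nonlinearPoincare}) shows $\mathcal{C}\lesssim \mathcal{A}^\alpha\mathcal{B}(1+\dots)$ and $\mathcal{D}\lesssim \mathcal{A}^\alpha\mathcal{B}+\mathcal{T}_p$, so the convective term inherits the $\mathcal{B}$ factor. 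Young's inequality then peels off $\tfrac{1}{4K}(\mathcal{A}^\alpha+\mathcal{T}_p^{2/p})$ at the \emph{fixed} scale $r$ (harmless constants), leaving a remainder which is a positive power of $\mathcal{B}$ multiplied by the $\mu$-blow-up, and \emph{this} is what the hypothesis $\mathcal{B}<\delta$ with $\delta=\delta(\mu,p,\eps,K)$ kills. You correctly identify this mechanism for the extension cross-term, but you drop it for the convective term — and it is the convective term that is the crux.

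The same issue recurs in your tail step: you propose $\mathcal{T}_p(\mu r)^{2/p}\le \tfrac{1}{2K}\mathcal{T}_p(r)^{2/p}+C\,\mathcal{A}(r)$, with $C$ implicitly $\mu$-dependent and no small factor. This cannot be absorbed. In the paper's Step 1, the intermediate range $\tfrac{\mu}{4}\le R\le\tfrac14$ is bounded via H\"older and the Poincar\'e inequality \eqref{eq:Poincare}, which produces $\mu^{-2}\mathcal{A}(1)^\alpha\mathcal{B}(1)$; the $\mathcal{B}$ factor is then used exactly as above. The zero-average hypothesis and the $p>\tfrac{1+\alpha}{\alpha}$ restriction indeed enter as you say, but the crucial idea you are missing is the systematic appearance of $\mathcal{B}$ in every term via Lemma \ref{lem:interpol}; the smallness assumption on $\mathcal{A}$ is neither needed nor available.
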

The reader may first look at this proof neglecting all nonlocal contributions; this would simplify the argument and for instance one could choose $\mu= \frac{1}{4} \,.$ A requirement of smallness of $\mu$ comes in the full proof from the tail estimate (which does not make use of the equation). The smallness assumption \eqref{hps:energyiter} on the dissipative part of the nonlinear local energy will be reached with the help of \eqref{hp-eps-reg} in the proof of Theorem~\ref{thm:running}. 

Let us point out that we require the velocity to have zero average: This seems unavoidable since all fundamental quantities $\mathcal A$ and $\mathcal B$ are of differential nature and hence do not see translations in $\theta$ or $u$. On the contrary, when we write the local energy inequality for $\theta$ minus its average on a fixed spacetime cylinder, the velocity field appearing in the equation is unchanged and hence its intensity is not under control with the fundamental quantities. Since the average of $u$ on $B_r$ scales in a supercritical way, our only option, in order to keep it under control as $r \to 0$, is to require zero average at every scale.

If we didn't need to assume zero average of the velocity $u$ in Proposition~\ref{prop:energyiter}, it would be quite classical, following \cite{CaffarelliKohnNirenberg1982}, to conclude the proof of Theorem~\ref{thm:running}. Indeed, we would iteratively apply Proposition \ref{prop:energyiter} $j$ times, letting thereby quantity $\mathcal A$ decay until it is smaller than $2\eps$ at scale $\mu^j r$. Since $\mathcal{A}$ and $\mathcal{B}$ control the $L^p$-excess by interpolation, we see that the smallness requirement of the excess decay of Proposition~\ref{prop:excessdecay}  are met provided that $\eps$ is chosen suitably small. Iterating the excess decay on a sequence of scales $r \to 0$, we conclude from Campanato's theorem that $\theta$ is H\"older continuous of order $\gamma >1-2\alpha$ in a neighbourhood of $(x,t)\,,$ hence smooth.

\subsection{Iteration of the nonlinear local energy inequality}\label{s:iteration}

To guarantee the zero-mean property \eqref{eq:zeroaverage} of the velocity field along a sequence of scales shrinking to $0$, we define, for a fixed center $(x,t) \in Q_1$ and an admissible rescaling parameter $\mu\in (0, \frac{1}{4}]$, the family $\big\{(\theta_j= \theta_j^{(x,t), \mu}, u_{j}=u_j^{(x,t), \mu})\big \}_{j \geq 0}$ of suitable Leray-Hopf solutions
\begin{align}\label{def:thetajcompact}
\theta_j^{(x,t),\mu}(z,s)&=\mu^{(2\alpha-1)j} \theta(\mu^j z + R_j(s) + x, \mu^{2\alpha j} s+t) \,, \\
u_j^{(x,t),\mu}(z,s)&= \mu^{(2\alpha-1)j} \big( u(\mu^j z + R_j(s) + x, \mu^{2\alpha j} s+t) - \mu^{-2\alpha j}\dot R_j(s)\big) \label{def:ujcompact}\,,
\end{align}
where $R_j=R_j^{(x,t), \mu}$ solves the ODE
\begin{equation}\label{eq:ODERj}
\begin{cases}
\dot R_j(s)= \mu^{2\alpha j} \displaystyle \mean{B_{1/4}}  u(\mu^j z + R_j(s) +x, \mu^{2\alpha j}s +t) \\
R_j(0) =0 \,.
\end{cases}
\end{equation}
The particle flow $R_j$ is well-defined thanks to the global control $u \in L^\infty BMO$ (see Theorem \ref{thm:LerayConstWu}) and we recall its iterative construction in Section \ref{s:particleflow} below. The above construction depends in a nontrivial way from the specific choice of $(x,t)$ and $\mu$. Since many statements work with these functions for fixed $(x,t)$ and $\mu$, we choose to explicit this dependence only when these parameters vary (for instance, in Proposition \ref{prop:ex-dec} below).

By scaling-invariance \eqref{eq:scaling}, it is easy to see that for every $j\geq 0$
\begin{enumerate}[label=(\roman*)]
\item \label{flow:i} $(\theta_j,  u_j )$ is a suitable Leray-Hopf solution of \eqref{eq:SQG} that ``lives" at scale $\mu^j$ around $(x,t)\,,$
\item  \label{flow:ii} the velocity field is of the form $u_j(z,s) = \mathcal{R}^\perp \theta_j (z,s) + f_j(s)$ for $f_j \in L^1_{loc} (\R)\,,$
\item  \label{flow:iii} the velocity field verifies the zero-mean assumption at unit scale $r= \frac{1}{4}$, that is $$[u_j(s)]_{B_{1/4}} =0 \qquad  \forall s\in [-1, 0]\,.$$
\end{enumerate}

The following proposition allows to iterate Proposition~\ref{prop:energyiter} at $(x,t)$ along a sequence of smaller scales $\mu^j$, at the price of working in each iteration with a different suitable Leray-Hopf solution $\theta_{j}$ (which, as we saw in its definition, does not only include the natural rescaling of $\theta$ by $\mu^j$, but also a space translation according to a certain flow).

\begin{prop}\label{prop:energyiter2} Let $p> \frac{1+\alpha}{\alpha}$ and $\sigma \in (1-2\alpha + \frac{2\alpha}{p}, 2\alpha)\,.$ Let $\varepsilon\in (0,1)$ and let $\mu$ be admissible for Proposition~\ref{prop:energyiter}. 
Let $(x,t) \in Q_1^+$ and let $(\theta, u)$ be a suitable Leray-Hopf solution on $\R^2  \times [-4^{2\alpha}, 0]\,$, let $(\theta_j, u_j)$ as in \eqref{def:thetajcompact}, \eqref{def:ujcompact}.
There exists $\delta=\delta(\varepsilon, \mu, p) \in (0,1)$ such that if
\begin{equation}\label{eq:hypiterj}
\mathcal{B}(\theta_j; 1/2) + \mathcal{T}_\frac{p}{1+\alpha}(\theta_j; 1/2) \leq \delta \, \text{ for } j=0, \dots, j_{0} \,,
\end{equation}
then
\begin{equation}\label{eq:iterj}
\mathcal{A}(\theta_{j_{0}}; \mu)^\alpha + \mathcal{T}_p (\theta_{j_{0}}; \mu)^\frac{2}{p} \leq \frac{1}{2^{j_{0}}} \left(\mathcal{A}(\theta_0; 1/2)^\alpha + \mathcal{T}_p(\theta_0; 1/2)^\frac{2}{p}\right)+ \varepsilon \, .
\end{equation}
\end{prop}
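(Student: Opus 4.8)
The plan is to prove Proposition~\ref{prop:energyiter2} by induction on $j_0$, at each step applying Proposition~\ref{prop:energyiter} to the rescaled solution $\theta_j$ at \emph{its} unit scale. Recall from \ref{flow:i}--\ref{flow:iii} that each $(\theta_j, u_j)$ is a suitable Leray-Hopf solution of \eqref{eq:SQG} with $u_j = \mathcal{R}^\perp \theta_j + f_j$ and, crucially, $[u_j(s)]_{B_{1/4}} = 0$ for all $s \in [-1,0]$. So the zero-average hypothesis \eqref{eq:zeroaverage} of Proposition~\ref{prop:energyiter} is satisfied for $\theta_j$ at center $0$ and radius $r = 1/2$ (the center-radius pair $(x,t,r) = (0,0,1/2)$; note $B_{r/2} = B_{1/4}$). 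Thus, provided $\mathcal{B}(\theta_j; 1/2) < \delta_0$ for the threshold $\delta_0 = \delta_0(\mu, p, \varepsilon', K)$ coming from Proposition~\ref{prop:energyiter}, we obtain
\begin{equation}\label{eq:onestep}
\mathcal{A}(\theta_j; \mu)^\alpha + \mathcal{T}_p(\theta_j; \mu)^{2/p} \leq \frac{1}{K}\left( \mathcal{A}(\theta_j; 1/2)^\alpha + \mathcal{T}_p(\theta_j; 1/2)^{2/p}\right) + \varepsilon'
\end{equation}
for a decay factor $K \geq 2$ and a small additive error $\varepsilon'$ both at our disposal. I would take $K = 2$ and choose $\varepsilon' = \varepsilon/2$.

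The engine of the induction is a \textbf{change-of-scale lemma} identifying $\theta_j$ rescaled from scale $\mu$ with $\theta_{j+1}$ at scale $1/2$ — or more precisely relating the fundamental quantities $\mathcal{A}, \mathcal{T}_p$ of the two. By construction \eqref{def:thetajcompact}, $\theta_{j+1}^{(x,t),\mu}(z,s) = \mu^{2\alpha-1}\theta_j^{(x,t),\mu}(\mu z + \tilde R(s), \mu^{2\alpha} s)$ for an appropriate residual translation $\tilde R$ (the increment of the particle flow between consecutive scales), which by the recursive construction of $R_j$ recalled in Section~\ref{s:particleflow} is exactly the flow generated by $\dashint_{B_{1/4}} u_j$. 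Since $\mathcal{A}$ and $\mathcal{B}$ are scaling-invariant and of differential nature (insensitive to the translation $\tilde R$ in the spatial variable and to the mean subtraction absorbed into $f_{j+1}$), one gets bounds of the form $\mathcal{A}(\theta_{j+1}; 1/2) \lesssim \mathcal{A}(\theta_j; \mu)$ and likewise for $\mathcal{B}$; the tail functional $\mathcal{T}_p$ requires a little more care because of the outer average $[\theta(s)]_{B_r}$ and the supremum over $R \geq r/4$ in \eqref{def:Tp'}, but the same scaling invariance plus the fact that a translation by $\tilde R$ with $|\tilde R| \lesssim \mu$ (controlled via $u \in L^\infty BMO$, Theorem~\ref{thm:LerayConstWu}) only perturbs the ball centers by a constant multiple of the radius, so it too transfers up to a harmless constant: $\mathcal{T}_p(\theta_{j+1}; 1/2) \lesssim \mathcal{T}_p(\theta_j; \mu) + (\text{lower order})$.

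With these two ingredients the induction runs as follows. Define $a_j := \mathcal{A}(\theta_j; 1/2)^\alpha + \mathcal{T}_p(\theta_j; 1/2)^{2/p}$. The one-step estimate \eqref{eq:onestep} together with the change-of-scale bounds gives, when $\mathcal{B}(\theta_j; 1/2) < \delta$ (and the hypothesis \eqref{eq:hypiterj} also controls $\mathcal{T}_{p/(1+\alpha)}(\theta_j;1/2) \leq \delta$, which is what is actually needed to start the tail estimate inside Proposition~\ref{prop:energyiter}), a recursion of the shape $a_{j+1} \leq \tfrac12 a_j + \tfrac{\varepsilon}{2}$, possibly after first absorbing the transfer constants by shrinking $\delta$ and rescaling $\varepsilon'$. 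Here one must be slightly careful: the transfer from scale $\mu$ at step $j$ to scale $1/2$ at step $j+1$ should not introduce a multiplicative constant $>1$ in front of $a_j$; this is arranged because Proposition~\ref{prop:energyiter} already provides a factor $1/K$ with $K$ arbitrarily large, so one picks $K$ larger than twice the transfer constant and still ends with contraction factor $\leq 1/2$. Unrolling $a_{j+1} \leq \tfrac12 a_j + \tfrac{\varepsilon}{2}$ from $j = 0$ to $j_0 - 1$ yields $a_{j_0} \leq 2^{-j_0} a_0 + \varepsilon \sum_{i \geq 0} 2^{-i-1} \cdot(\ldots) \leq 2^{-j_0} a_0 + \varepsilon$. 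Finally, applying \eqref{eq:onestep} once more at step $j_0$ (using $\mathcal{B}(\theta_{j_0}; 1/2) < \delta$ from \eqref{eq:hypiterj}) converts $a_{j_0}$ into the left-hand side $\mathcal{A}(\theta_{j_0}; \mu)^\alpha + \mathcal{T}_p(\theta_{j_0}; \mu)^{2/p}$ at the cost of another contraction and $\varepsilon'$, giving precisely \eqref{eq:iterj} after relabelling constants.

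The main obstacle I anticipate is the change-of-scale step for the \emph{tail} functional $\mathcal{T}_p$: unlike $\mathcal{A}$ and $\mathcal{B}$, it is genuinely nonlocal (the supremum over all outer radii $R \geq r/4$ reaches out to infinity) and it involves the \emph{outer} average $[\theta(s)]_{B_r}$, so the translation $\tilde R$ induced by the particle flow genuinely moves the reference ball; one needs the weight $(r/R)^{\sigma p}$ with $\sigma \in (1-2\alpha + \tfrac{2\alpha}{p}, 2\alpha)$ and the smallness of $|\tilde R|/r \lesssim \mu$ to show this contributes only a bounded multiple plus a term that can be folded into $\varepsilon'$. A secondary technical point is verifying the recursive flow identity $R_{j+1}(s) = R_j(\mu^{2\alpha}s) + \mu^j \tilde R(s)$ (or whatever the exact form is) so that $\theta_{j+1}$ really is the scale-$\mu$ zoom of $\theta_j$ with the correct translation — this is the content of Section~\ref{s:particleflow} and, once quoted, makes the bookkeeping routine.
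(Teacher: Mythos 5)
Your high-level skeleton matches the paper's: apply Proposition~\ref{prop:energyiter} to each $(\theta_j,u_j)$ at $r=1/2$ (where the zero-average hypothesis holds at unit scale by construction), transfer from $\mathcal{A}(\theta_{j};\mu), \mathcal{T}_p(\theta_j;\mu)$ to $\mathcal{A}(\theta_{j+1};1/2), \mathcal{T}_p(\theta_{j+1};1/2)$ via the change of variables of Lemma~\ref{lem:changeofvar} and scale invariance, and choose $K$ large enough (the paper takes $K=2C_1$) to swallow the transfer constant $C_1$ so the recursion contracts by $1/2$. That part is right.

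However, there is a genuine gap in how you justify the transfer step, and it is precisely the point the proposition was designed to address. You claim the translation increment $\tilde R$ satisfies $|\tilde R|\lesssim\mu$ ``controlled via $u\in L^\infty BMO$'' (Theorem~\ref{thm:LerayConstWu}). This does not work: $u_j$ is $u$ rescaled by $\mu^{(2\alpha-1)j}$, and since $BMO$ scales like $L^\infty$, $\|u_{j}\|_{BMO}\sim\mu^{(2\alpha-1)j}\|u\|_{BMO}$ \emph{grows} as $j\to\infty$ (because $2\alpha-1<0$), so BMO gives no uniform-in-$j$ smallness of the flow $x_{j+1}$. The $BMO$ control enters only to make the ODE \eqref{eq:ODERj} well-posed (log-Lipschitz vector field), not to bound the flow. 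Relatedly, you assert that $\mathcal{T}_{p/(1+\alpha)}(\theta_j;1/2)\le\delta$ in \eqref{eq:hypiterj} is ``what is actually needed to start the tail estimate inside Proposition~\ref{prop:energyiter}'' --- but Proposition~\ref{prop:energyiter} requires \emph{only} $\mathcal{B}(\theta;r)<\delta$, not any tail smallness. The real purpose of the $\mathcal{T}_{p/(1+\alpha)}$ term is exactly to control the flow: via Lemma~\ref{lem:excessofu} and Poincar\'e \eqref{eq:Poincare}, the smallness of $\mathcal{B}(\theta_{j-1};1/2)+\mathcal{T}_{p/(1+\alpha)}(\theta_{j-1};1/2)$ bounds $\big(\mean{Q_1}|u_{j-1,\mu}|^{p/(1+\alpha)}\big)^{(1+\alpha)/p}$ by $C\mu^{2\alpha-1-(2+2\alpha)(1+\alpha)/p}\delta^{(1+\alpha)/p}$, which can be forced $\le\varepsilon_1$ by shrinking $\delta$; this is the hypothesis \eqref{eq:changeofvarhyp} of Lemma~\ref{lem:changeofvar}, which in turn yields the flow bound \eqref{eq:controlonflow} and hence the comparability \eqref{eq:changeofvar}. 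Without this mechanism the change-of-scale step is unjustified and the recursion does not close.
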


In order to control the intensity of the flow, the smallness assumption \eqref{eq:hypiterj}, at difference from \eqref{hps:energyiter}, needs to include a nonlocal control of the tails. Such a control is necessary to allow a comparison of $\mathcal{A}(\theta_j; 1/2)$ and $\mathcal{T}_p(\theta_j; 1/2)$ with $\mathcal{A}(\theta_{j-1}; \mu)$ and $\mathcal{T}_p(\theta_{j-1}; \mu)\,.$ We choose to enforce this control in the form of $\mathcal{T}_{\frac{p}{1+\alpha}}$, since this quantity, at difference from $\mathcal{T}_{p}$, has the same scaling as $\mathcal{B}$, and we will see in Lemma \ref{lem:limsup} that our assumption on the asymptotic smallness of $\mathrm{B}$ also implies the asymptotic smallness of $\mathcal{T}_{\frac{p}{1+\alpha}}$.

\subsection{Deduction of H\"older continuity of $\theta$ via excess decay} 
With the help Proposition~\ref{prop:energyiter2}, starting at any center $(x,t)$, we can make the quantity $\mathcal A(\theta_{j_{0}}; \mu)$ and $\mathcal{T}_p(\theta_{j_{0}}; \mu)$ smaller than any fixed $\eps$, provided that we perform sufficiently many iterations, or otherwise stated, provided that we allow a sufficiently large $j_{0}$ (whose size we show to be locally uniform with respect to the center). Hence,  for larger values of $j$, we can apply the excess decay of Proposition~\ref{prop:excessdecay} below and, iterating the excess decay along the sequence of solutions $\{ \theta_{j}, u_{j}\}$ as $j \to \infty$ (corresponding to a sequence of scales $\mu^j \to 0$), we deduce the H\"older continuity of $\theta$ via Campanato's theorem by employing the joint decay of the two excesses and by keeping under control the effect of the flow. This is the content of the next proposition, whose proof follows closely the one of \cite[Theorem 5.2]{CH2021}. We  include it nevertheless since some aspects need to be changed: for instance, the switch between the two different notions of excess. The notation $Q_r^+(x,t)$ denotes the centered in time parabolic cylinder introduced in \eqref{def:cylinder}.

\begin{prop}\label{prop:ex-dec} Let $\alpha \in (\frac{1}{4}, \frac{1}{2})$, $p> \frac{1+\alpha}{\alpha}$,  $\sigma\in (1-2\alpha + \frac{2\alpha}{p}, 2\alpha)$  and $\gamma \in (0, \sigma - \frac{2\alpha}{p})$. Let $(\theta, u)$ be a suitable Leray-Hopf solution to \eqref{eq:SQG}--\eqref{eq:u} on $\R^2 \times [-4^{2\alpha}, 4^{2\alpha}]$. 
Then, there exist $j_0:=j_0(\norm{\theta}_{L^\infty_t L_x^2}, \norm{\theta}_{L^\infty_{t,x}}) \in \N$,
 $\mu:= \mu(\alpha,p, \sigma) \in (0, \frac{1}{4}]$ and $\delta:= \delta(\alpha,p, \sigma) \in (0,1)$ such that if
\begin{equation}\label{hyp:ex-dec}
\sup_{(x,t) \in Q_{\mu^{j_0}}^+} \,\sup_{j\in\{ 0,..., j_0\}}  \mathcal{B}(\theta^{(x,t), \mu}_j; 1/2) + \mathcal{T}_{p-1}(\theta^{(x,t), \mu}_j; 1/2) < \delta \,,
\end{equation}
then
\begin{equation}\label{eq:holdercont}
\theta\in C^{\beta, \frac{p-1}{p} \beta} (Q_{\mu^{j_0}}^+) \quad \text{with } \beta := \gamma - \frac{1}{p-1}\Big[ 1-2\alpha + \frac{2\alpha}{p} \Big] \,.
\end{equation}
In particular, if $\beta >1-2\alpha\,,$ then $(0,0)$ is a regular point.
\end{prop}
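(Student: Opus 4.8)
\emph{Overall strategy.} The plan is to run the Campanato-type scheme of \cite[Theorem~5.2]{CH2021}, with the one pervasive change that the excess-decay iteration must be \emph{started} from the differential quantities $\mathcal{A},\mathcal{B},\mathcal{T}_p$ rather than from the interpolated spacetime excess of Proposition~\ref{prop:excessdecay}. Concretely, I would first invoke Proposition~\ref{prop:energyiter2} to reach, at \emph{every} center $(x,t)\in Q_{\mu^{j_0}}^+$ and for $j_0$ large, the smallness of $\mathcal{A}(\theta_{j_0}^{(x,t),\mu};\mu)$ and $\mathcal{T}_p(\theta_{j_0}^{(x,t),\mu};\mu)$; then feed this, together with the smallness of $\mathcal{B}$, into Lemma~\ref{lem:interpol} to conclude that the $L^p$-excess of $\theta_{j_0}^{(x,t),\mu}$ at scale $\mu$ (including its nonlocal tails) lies below the threshold of Proposition~\ref{prop:excessdecay}; and finally iterate the excess decay of \cite{CH2021} along the whole flow-adapted sequence $\{\theta_j^{(x,t),\mu}\}_{j\ge j_0}$, keeping the tilting of the particle flow under control, so as to obtain Campanato decay at $(x,t)$ and hence the asserted H\"older regularity.

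\emph{Reaching the excess-decay regime.} Fix $\mu\in(0,\tfrac14]$ admissible for Proposition~\ref{prop:energyiter} (and small enough for the tilting estimate below), and let $\eps\in(0,1)$ be the smallness threshold required by Lemma~\ref{lem:interpol} and Proposition~\ref{prop:excessdecay}. Applying Proposition~\ref{prop:energyiter2} with this $\eps$ yields $\delta_1=\delta_1(\eps,\mu,p)$ such that, whenever \eqref{eq:hypiterj} holds for the solutions $\theta_j^{(x,t),\mu}$ with $j=0,\dots,j_0$, one has by \eqref{eq:iterj}
\[
\mathcal{A}(\theta_{j_0}^{(x,t),\mu};\mu)^\alpha+\mathcal{T}_p(\theta_{j_0}^{(x,t),\mu};\mu)^{2/p}\le 2^{-j_0}\Big(\mathcal{A}(\theta_0^{(x,t),\mu};1/2)^\alpha+\mathcal{T}_p(\theta_0^{(x,t),\mu};1/2)^{2/p}\Big)+\eps .
\]
Since $\theta_0^{(x,t),\mu}$ is just $\theta$ composed with the bounded flow $R_0$, the parenthesis is bounded by a constant $C_0=C_0(\norm{\theta}_{L^\infty_t L_x^2},\norm{\theta}_{L^\infty_{t,x}})$ (the maximum principle bounds $\mathcal{A}$, Theorem~\ref{thm:LerayConstWu} bounds the tail); choosing $j_0$ with $2^{-j_0}C_0<\eps$ gives $\mathcal{A}(\theta_{j_0}^{(x,t),\mu};\mu)^\alpha+\mathcal{T}_p(\theta_{j_0}^{(x,t),\mu};\mu)^{2/p}\le 2\eps$ for every $(x,t)\in Q_{\mu^{j_0}}^+$. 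In turn \eqref{eq:hypiterj} follows from \eqref{hyp:ex-dec} for $\delta$ small enough: at the fixed scale $1/2$, $\mathcal{T}_{p/(1+\alpha)}$ is controlled by a fixed power of $\mathcal{T}_{p-1}$ via Jensen's inequality (recall $\tfrac{p}{1+\alpha}<p-1$), and $\mathcal{B}(\theta_{j_0}^{(x,t),\mu};\mu)\le C(\mu)\,\mathcal{B}(\theta_{j_0}^{(x,t),\mu};1/2)$ because shrinking the cylinder from scale $1/2$ to scale $\mu$ costs only a fixed power of $\mu^{-1}$. Feeding the resulting $\mathcal{A},\mathcal{B},\mathcal{T}_p\lesssim\eps$ at scale $\mu$ into Lemma~\ref{lem:interpol}, and using $\mathcal{T}_p\lesssim\eps$ once more for the nonlocal contributions, the full $L^p$-excess of $\theta_{j_0}^{(x,t),\mu}$ at scale $\mu$ lies below the threshold of Proposition~\ref{prop:excessdecay}, uniformly over $(x,t)\in Q_{\mu^{j_0}}^+$.

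\emph{Iterating the excess decay.} From here I would argue as in \cite[Theorem~5.2]{CH2021}. Each $\theta_j^{(x,t),\mu}$ is a suitable Leray-Hopf solution whose velocity $\mathcal{R}^\perp\theta_j+f_j$ has zero spatial mean on $B_{1/4}$ (properties \ref{flow:i}--\ref{flow:iii}), so Proposition~\ref{prop:excessdecay} applies to it; moreover the identity $\theta_{j+1}^{(x,t),\mu}(z,s)=\mu^{2\alpha-1}\theta_j^{(x,t),\mu}(\mu z+a_j(s),\mu^{2\alpha}s)$, with $a_j(s)=\mu^{-j}\big(R_{j+1}(s)-R_j(\mu^{2\alpha}s)\big)$, makes the $L^p$-excess of $\theta_{j+1}$ at a fixed scale comparable to the (already decayed) $L^p$-excess of $\theta_j$ at scale $\mu$, up to an error governed by the tilting $a_j$. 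Since $a_j$ is a difference of spatial means of $u_j=\mathcal{R}^\perp\theta_j+f_j$ over balls of scales $\mu$ and $1/4$, it is controlled, through Calderon-Zygmund estimates, by the tail functionals of $\theta_j$: small and summable by \eqref{hyp:ex-dec} for $j\le j_0$, and by the smallness of the excess (which carries its own tail) propagated by Proposition~\ref{prop:excessdecay} for $j>j_0$. Iterating, the $L^p$-excess of $\theta$ centered at $(x,t)$ decays geometrically along the scales $\mu^j$, $j\ge j_0$, uniformly over $(x,t)\in Q_{\mu^{j_0}}^+$; Campanato's theorem in the parabolic metric adapted to \eqref{eq:scaling} then upgrades this to $\theta\in C^{\beta,\frac{p-1}{p}\beta}(Q_{\mu^{j_0}}^+)$ with $\beta$ as in \eqref{eq:holdercont}, the loss $\gamma\mapsto\beta$ stemming from the scaling exponent $1-2\alpha+\tfrac{2\alpha}{p}$ of the flow/tail term. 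Finally, if $\beta>1-2\alpha$ then $\theta$ — hence $u=\mathcal{R}^\perp\theta$ up to Calderon-Zygmund — is H\"older continuous of order strictly above $1-2\alpha$ near $(0,0)$, and the conditional regularity of \cite{ConstantinWu2008,ConstantinWu2009} yields smoothness; thus $(0,0)$ is a regular point.

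\emph{Main obstacle.} The delicate new point, compared with \cite[Theorem~5.2]{CH2021}, is the matching in the middle: one has to verify, at the single deep scale $\mu$ and uniformly in the center, that the three differential quantities $\mathcal{A},\mathcal{B},\mathcal{T}_p$ delivered by Proposition~\ref{prop:energyiter2} translate — through Lemma~\ref{lem:interpol} and the elementary behaviour of $\mathcal{B}$ and of the tails under change of scale and of integrability — into the precise smallness of the \emph{interpolated} $L^p$-excess demanded by Proposition~\ref{prop:excessdecay}, and that this property survives the first passage from $\theta_{j_0}$ to $\theta_{j_0+1}$. The remainder is the bookkeeping, already present in \cite{CH2021}, of a conditional Campanato iteration: at each stage the decayed excess plus the accumulated tilting error must still fit below the excess-decay threshold, which is exactly what forces $\eps$ and $\mu$ to be chosen small depending only on $\alpha,p,\sigma$.
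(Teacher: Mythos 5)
Your overall strategy matches the paper's: reach smallness of $\mathcal{A}$ and $\mathcal{T}_p$ at a deep scale via Proposition~\ref{prop:energyiter2}, convert to smallness of the $L^p$-excess via Lemma~\ref{lem:interpol} (together with Lemma~\ref{lem:changeofvar}), then iterate Proposition~\ref{prop:excessdecay} along the flow-adapted sequence and conclude via a Campanato argument. Your remark that $\mathcal{T}_{p/(1+\alpha)}$ at a fixed scale is controlled by $\mathcal{T}_{p-1}$ via Jensen (since $p/(1+\alpha)<p-1$ when $p>\frac{1+\alpha}{\alpha}$) is correct and addresses a step the paper leaves implicit.

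There is, however, a genuine gap in your handling of the particle flow in the Campanato step. You state that the tilting is ``small and summable by \eqref{hyp:ex-dec} for $j\le j_0$.'' This is false. The hypothesis \eqref{hyp:ex-dec}, via Lemma~\ref{lem:excessofu}, controls the size of the rescaled velocity so that Lemma~\ref{lem:changeofvar} can be applied with the \emph{fixed} threshold $\varepsilon_1$; it makes each increment $x_j$ bounded by $\frac{1}{4}$ but not small, and consequently the cumulative translation $R_{j_0}(s)=\sum_{k\le j_0}\mu^k x_k(\mu^{2\alpha(k-j_0)}s)$ is in general of order one. Since the Campanato estimate for $\theta$ (as opposed to the sequence $\theta_j$) requires undoing this translation, one must control $R_j(s)$ for $j$ large, and the first $j_0$ terms cannot be made small by any choice of $\delta$. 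The paper resolves this by a separate observation: writing $R_j(s)=R_{j_0}(\mu^{2\alpha(j-j_0)}s)+r_{j>j_0}(s)$, the ``stubborn'' term $R_{j_0}(\mu^{2\alpha(j-j_0)}s)$ does become small for $j$ large because the time argument shrinks and $R_{j_0}(0)=0$, and this is quantified via the global $u\in L^\infty_t BMO_x$ control from Theorem~\ref{thm:LerayConstWu}. Carrying this out forces the introduction of an additional index $j_1>j_0$ (depending on $\norm{u}_{L^{2p}(Q_1^+)}$ and $j_0$) at which the Campanato decay actually begins, and of the adapted metric with exponent $\tfrac{p-1}{p}$ in time rather than $\tfrac{1}{2\alpha}$, which is precisely the source of the loss $\gamma\mapsto\beta$ and of the restriction to $r\le\mu^{j_1+1}$ in the final Campanato bound. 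Your sketch would not close without supplying this idea; simply iterating the excess decay controls the differences $\theta_{j+1}$ vs.\ $\theta_j$, but does not by itself translate back to decay of the local oscillation of the original $\theta$ at the physical point $(x,t)$.
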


Finally, the proof of Theorem~\ref{thm:running} follows from this proposition by showing how to meet the smallness assumption \eqref{hyp:ex-dec} taking advantage of the smallness of $\mathrm{B}$ in \eqref{hp-eps-reg}.

\section{Preliminary definitions and results}\label{s:preliminaries}

\subsection{Notation}\label{s:notation}
We introduce the spacetime cylinders (backward or centred in time) that are adapted to the parabolic scaling \eqref{eq:scaling}
\begin{equation}\label{def:cylinder}
Q_r(x, t):=B_r(x) \times (t-r^{2\alpha}, t] \qquad \text{ and } \qquad  Q_r^+(x,t):= B_r(x) \times (t-r^{2\alpha}, t+r^{2\alpha}) \,.
\end{equation}
In the upper half-space $\R^{n+1}_+\,,$ we define $B_r^*(x):=B_r(x) \times [0, r)$ and correspondingly
$Q_r^*(x, t) := B_r^*(x) \times (t-r^{2\alpha}, t]\,. $
We will omit the center of the cylinders whenever $(x, t)=(0,0) \,.$

For bounded sets $E \subseteq \R^n \times [0, \infty)$ and $F \subseteq \R^n\,,$ we denote the space(time) averages of functions or vector fields $f$ defined on $\R^n \times [0, \infty)$ by
\begin{equation}\label{def:averages}
(f)_E := \mean{E} f(x,t) \, dx \,dt \qquad \text{ and } \qquad [f(t)]_F :=\mean{F} f(x,t) \, dx\,.
\end{equation}

For $\Omega \subseteq \R^n$, $s \in (0,1)$, $1 \leq p < \infty $ and $f \in L^p(\Omega)$ we denote the Gagliardo semi-norm by
\begin{equation}
 [f]_{W^{s,p}(\Omega)}:= \left(\int_{\Omega} \int_{\Omega} \frac{\lvert f(x)-f(y) \rvert^p}{\lvert x-y \rvert^{n +sp} } \, dx \, dy \right)^\frac{1}{p}\,.
\end{equation}
We define the spatial Sobolev spaces  $W^{s,p}(\Omega)$  of order $s$ as the space of functions where such seminorm is finite.
When $p=2$, we sometimes denote $W^{\alpha,2}$ by $H^\alpha$ and  we recall that for $\Omega=\R^n$ the Gagliardo semi-norm coincides, up to a universal constant, with the semi-norms \eqref{e:en_of_ext-CS} below.

\subsection{Caffarelli-Silvestre extension}\label{sec:CS} We recall the following extension problem. We use the notation $\overline \nabla$, $\overline \Delta$ for differential operators defined on the upper half-space $\mathbb R^{n+1}_+$.
\begin{theorem}[Caffarelli--Silvestre \cite{CaffarelliSilvestre2007}]\label{thm:CF} Let $\alpha \in (0,1)\,,$ $f \in W^{\alpha,2} (\R^n)$ and set $b:=1-2\alpha$. There is a unique solution $f^* \in W^{1,2}(\R^{n+1}_+,y^b)$ (the  ``extension'' of $f$) to the boundary value problem
\begin{equation}\label{e:extension}
\begin{cases}
	\overline{\Delta}_b \, f^*(x,y):= y^{-b} \overline{{\rm div}}\, \big(y^b \overline\nabla f^*\big)=0\,, \\
	f^*(x,0)=f(x)\, .
\end{cases}
\end{equation}
Moreover, there exists a constant $c_{n, \alpha}$, depending only on $n$ and $\alpha$, with the following properties:
\begin{itemize}
\item[(a)] $(-\Delta)^{\alpha} f (x)=-c_{n,\alpha}\lim_{y\to 0}y^b\partial_y f^* (x,y)\,,$
\item[(b)] The following energy identity holds
\begin{equation}\label{e:en_of_ext-CS}
\int_{\R^n}|(-\Delta)^{\frac{\alpha}{2}} f^2\,dx=\int_{\R^n}|\xi|^{2\alpha}|\widehat{f}(\xi)|^2\,d\xi=c_{n,\alpha}\int_{\R^{n+1}_+}y^b|\overline\nabla f^*|^2\,dx\,dy\,.
\end{equation}
\item[(c)] The following inequality holds for every extension $g \in W^{1,2} (\R^{n+1}_+, y^b)$ of $f$:
\begin{equation}\label{e:minimum_ext_caff}
\int_{\R^{n+1}_+}y^b|\overline \nabla f^*|^2\,dx\,dy \leq \int_{\R^{n+1}_+}y^b|\overline \nabla g |^2\,dx\,dy\,.
\end{equation}
\item[(d)]\label{e:poissonformula} We have the formula $f^*(x,y)= c_{n, \alpha} \,(P(\cdot, y) \ast f)(x)\,,$ where  $P(x,y):=y^{2\alpha}/( y^2 + \lvert x \rvert^2)^\frac{n+2\alpha}{2}$.
\end{itemize}
\end{theorem}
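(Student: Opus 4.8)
The plan is to obtain $f^*$ by the direct method in the calculus of variations and then to identify it explicitly through the Fourier transform in the $x$ variable. The natural setting is the weighted Hilbert space $W^{1,2}(\R^{n+1}_+, y^b)$: since $b = 1-2\alpha \in (-1,1)$ exactly when $\alpha \in (0,1)$, the weight $y^b$ is a Muckenhoupt $A_2$ weight, and in this regime the space carries a bounded trace operator onto $W^{\alpha,2}(\R^n)$ that is surjective with a bounded right inverse. Hence the affine class $\mathcal{A}_f := \{ g \in W^{1,2}(\R^{n+1}_+, y^b) : g(\cdot, 0) = f \}$ is a nonempty closed affine subspace, and I would minimize over it the convex, weakly lower semicontinuous energy $\mathcal{E}(g) := \int_{\R^{n+1}_+} y^b \lvert \overline\nabla g \rvert^2 \, dx \, dy$. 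A unique minimizer $f^*$ exists; its first variation is precisely the weak form of $\overline{\Delta}_b f^* = 0$, and minimality among all extensions is assertion (c). Uniqueness of a solution of \eqref{e:extension} inside $W^{1,2}(\R^{n+1}_+, y^b)$ follows because the difference $w$ of two such solutions has zero trace and satisfies $\overline{\Delta}_b w = 0$, so testing the equation against $w$ gives $\int_{\R^{n+1}_+} y^b \lvert \overline\nabla w \rvert^2 = 0$.

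For the explicit formula, applying the Fourier transform in $x$ reduces \eqref{e:extension} to the family of ODEs $\partial_y(y^b \partial_y \widehat{f^*}) = \lvert \xi \rvert^2 y^b \widehat{f^*}$ with $\widehat{f^*}(\xi, 0) = \widehat{f}(\xi)$, whose unique bounded solution is $\widehat{f^*}(\xi, y) = \widehat{f}(\xi)\, \varphi(\lvert \xi \rvert y)$, where $\varphi$ is the decaying solution of $\varphi'' + \tfrac{b}{s}\varphi' = \varphi$ normalized by $\varphi(0) = 1$ (a modified Bessel profile, $\varphi \propto s^\alpha K_\alpha(s)$). Rather than inverting this transform by hand, the shortest route to (d) is to check directly that $P(x,y) = y^{2\alpha}/(y^2 + \lvert x \rvert^2)^{(n+2\alpha)/2}$ solves $\overline{\Delta}_b P = 0$ in $\{ y > 0\}$ and that $c_{n,\alpha} P(\cdot, y)$ is an approximate identity as $y \to 0$ for the appropriate normalization $c_{n,\alpha}$; then $c_{n,\alpha}(P(\cdot, y) \ast f)$ is a finite-energy solution of \eqref{e:extension}, hence equals $f^*$ by the uniqueness above.

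Properties (a) and (b) are then computations on the Fourier side. For (b), Plancherel in $x$ gives $\mathcal{E}(f^*) = \int_{\R^n} \int_0^\infty y^b \big( \lvert \xi \rvert^2 \lvert \widehat{f^*} \rvert^2 + \lvert \partial_y \widehat{f^*} \rvert^2 \big) \, dy \, d\xi$; inserting $\widehat{f^*}(\xi, y) = \widehat{f}(\xi)\varphi(\lvert \xi \rvert y)$ and rescaling $s = \lvert \xi \rvert y$ (using $1 - b = 2\alpha$) collapses the inner integral to a finite constant times $\lvert \xi \rvert^{2\alpha}$, which integrates to $c_{n,\alpha}^{-1}\int_{\R^n} \lvert \xi \rvert^{2\alpha} \lvert \widehat{f}(\xi) \rvert^2 \, d\xi$. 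For (a), the indicial analysis of the ODE yields $\varphi(s) = 1 - c\, s^{2\alpha} + o(s^{2\alpha})$ near $0$ with $c > 0$, hence $s^b \varphi'(s) \to -2\alpha c$, and therefore $y^b \partial_y \widehat{f^*}(\xi, y) = \widehat{f}(\xi)\, \lvert \xi \rvert^{1-b}\, \big(s^b\varphi'(s)\big)\big|_{s = \lvert \xi \rvert y} \to -2\alpha c\, \lvert \xi \rvert^{2\alpha}\, \widehat{f}(\xi)$ as $y \to 0$; choosing $c_{n,\alpha}$ so that $2\alpha c\, c_{n,\alpha} = 1$ identifies $-c_{n,\alpha}\lim_{y \to 0} y^b \partial_y f^*$ with the operator whose Fourier multiplier is $\lvert \xi \rvert^{2\alpha}$, that is, with $(-\Delta)^\alpha f$.

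The main obstacle I expect is the degenerate-elliptic functional analysis underpinning the first step: verifying that $y^b$ is $A_2$ precisely for $\alpha \in (0,1)$, that traces onto $W^{\alpha,2}(\R^n)$ are well defined and the corresponding trace operator is surjective with a bounded right inverse (so that $\mathcal{A}_f \neq \emptyset$ and the direct method applies), and that one may integrate by parts against test functions and pass to the limit $y \to 0$ in (a). These rest on the Fabes--Kenig--Serapioni theory of second-order equations with $A_2$ weights, in particular a weighted Sobolev/trace inequality and interior regularity for $\overline{\Delta}_b$-harmonic functions. The remaining ODE bookkeeping, i.e., the exact value of $c$, is elementary and, as noted, can be bypassed once (d) is in hand, since then (a)--(c) can be read off directly from the Poisson kernel.
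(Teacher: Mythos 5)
The paper does not prove this theorem; it is taken verbatim (modulo a typo in \eqref{e:en_of_ext-CS}: it should read $\int_{\R^n}\lvert(-\Delta)^{\alpha/2}f\rvert^2\,dx$) from Caffarelli--Silvestre \cite{CaffarelliSilvestre2007} and simply cited, so there is no in-paper proof to compare against. Your proposal is essentially the argument of that reference and is correct in its main lines: the direct method in $W^{1,2}(\R^{n+1}_+,y^b)$ gives existence, uniqueness and property (c); the Fourier-side ODE $\partial_y(y^b\partial_y\widehat{f^*})=\lvert\xi\rvert^2 y^b\widehat{f^*}$ with bounded solution $\widehat{f}(\xi)\varphi(\lvert\xi\rvert y)$, $\varphi\propto s^\alpha K_\alpha(s)$, gives (a) and (b) after the rescaling $s=\lvert\xi\rvert y$; and (d) follows by uniqueness after checking that $P$ is $\overline\Delta_b$-harmonic with $c_{n,\alpha}P(\cdot,y)$ an approximate identity. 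I also verified your indicial analysis: $\varphi(s)=1-cs^{2\alpha}+o(s^{2\alpha})$ gives $s^b\varphi'(s)\to-2\alpha c$, hence the correct sign and multiplier $\lvert\xi\rvert^{2\alpha}$ in (a).

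Two points merit a bit more care, both of which you partially flag. First, in the direct method, the Dirichlet energy $\mathcal{E}$ is only a seminorm on $W^{1,2}(\R^{n+1}_+,y^b)$ (it vanishes on constants), so to get a weakly convergent minimizing sequence you either work in the homogeneous space modulo constants, or, as Caffarelli--Silvestre effectively do, first produce the Poisson extension explicitly and check $\mathcal{E}(c_{n,\alpha}P(\cdot,y)\ast f)\simeq [f]^2_{W^{\alpha,2}}$, then observe that competitors with strictly lower energy would contradict the variational characterization; the two routes can be merged but one should be explicit about coercivity. Second, the uniqueness argument ``test against $w$'' requires justifying integration by parts for a degenerate-elliptic operator with $A_2$ weight all the way to the boundary; this rests on the Fabes--Kenig--Serapioni theory you invoke, and the limit in (a) similarly needs the weighted interior and boundary regularity to pass from the Fourier-side pointwise limit to a limit in the sense of distributions. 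Finally, a remark on the paper's statement rather than yours: it uses a single symbol $c_{n,\alpha}$ across (a), (b) and (d). The constants in (a) and (b) coincide, as one sees by multiplying $\overline\Delta_b f^*=0$ by $f^*$ and integrating by parts, but the normalization constant of the Poisson kernel in (d) is a priori a different numerical constant; this abuse of notation is harmless but worth noticing.
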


\subsection{Suitable Leray-Hopf solutions} \label{s:LH} 
In analogy with the seminal work of Leray \cite{Leray34} for the Navier-Stokes system, the existence of distributional solutions to \eqref{eq:SQG}--\eqref{eq:u} obeying \emph{the global energy inequality} is well-known.
\begin{definition}\label{def:LerayHopf} Let $\theta_0\in L^2 (\R^2)$. A pair $(\theta,u) $, with $\theta\in L^\infty ((0,T), L^2 (\R^2)) \cap L^2 ((0,T), W^{\alpha,2} (\R^2))$, is a Leray-Hopf solution of \eqref{eq:SQG}--\eqref{eq:Cauchy}  on $\mathbb R^2\times (0,T)$ if
\begin{itemize}
\item[(a)] 
 $\theta$ is a distributional solution of \eqref{eq:SQG}--\eqref{eq:Cauchy}, namely $\div u= 0$ and for any $\varphi\in C^\infty_c (\mathbb R^2\times [0,T))$
\begin{equation}
\label{eqn:weak}
\int \big(\partial_t \varphi  \theta + u \theta \cdot \nabla \phi - (-\Delta)^\alpha \varphi  \theta \big)\, dx\, dt 
= - \int \theta_0 (x) \cdot \varphi (0,x)\, dx \,,
\end{equation}
\item[(b)] the following inequality holds for $s=0$, a.e. $s\in [0,T]$ and for every $t \in (s,T)$
\begin{align}
\frac{1}{2} \int |\theta|^2 (x,t)\, dx + \int_s^t \int |(-\Delta)^{\frac{\alpha}{2}} \theta|^2 (x,\tau)\, dx\, d\tau &\leq \frac{1}{2} \int |\theta|^2 (x,s)\, dx\,.
\label{e:g_energy_2} 
\end{align}
\end{itemize}
\end{definition}
As suggested by the maximum principle in the smooth case, Leray-Hopf solutions are bounded. 
\begin{theorem}[\cite{ConstantinWu2009} Theorem 2.1]\label{thm:LerayConstWu} Let $\theta_0 \in L^2(\R^2)$ and let $(\theta, u)$ be a Leray-Hopf solution of \eqref{eq:SQG}--\eqref{eq:Cauchy}. Then there exist a universal constant, independent on $u$, such that for any $t>0\,$
\begin{equation}\label{eq:thetaLinfty}
\sup_{x \in \R^2} \lvert \theta(x,t) \rvert \leq C\norm{\theta_0}_{L^2} t^{-\frac{1}{2\alpha}}\, .
\end{equation}
In the particular case \eqref{eq:u}, we deduce that for any $t>0\,,$
$\norm{u(\cdot, t)}_{{BMO}(\R^2)} \leq C \norm{\theta_0}_{L^2}t^{-\frac{1}{2\alpha}}\, .$
\end{theorem}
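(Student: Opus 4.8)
The plan is to establish the pointwise bound \eqref{eq:thetaLinfty} by a De~Giorgi-type iteration on the level sets of $\theta$, exploiting the structure of the equation \eqref{eq:SQG} together with $\div u = 0$, and then to deduce the $BMO$ bound on $u$ in the case \eqref{eq:u} from Calder\'on--Zygmund/Riesz-transform estimates. I would first recall that, since $(\theta,u)$ is a Leray-Hopf solution, it obeys a \emph{local} energy inequality of the form obtained by (formally) multiplying \eqref{eq:SQG} by $(\theta-\lambda)_+$ for a truncation level $\lambda>0$: using $\div u = 0$ to discard the transport term $\int u\cdot\nabla\theta\,(\theta-\lambda)_+ = \tfrac12\int u\cdot\nabla (\theta-\lambda)_+^2 = 0$, one is left with an inequality controlling $\sup_t \tfrac12\|(\theta-\lambda)_+(t)\|_{L^2}^2 + \int_0^t\|(-\Delta)^{\alpha/2}(\theta-\lambda)_+\|_{L^2}^2$ by $\tfrac12\|(\theta-\lambda)_+(s)\|_{L^2}^2$ on the parabolic slab. (The nonlocality of $(-\Delta)^\alpha$ contributes a favourably-signed commutator term, by the pointwise inequality $(\theta-\lambda)_+(x)\,(-\Delta)^\alpha\theta(x)\geq (-\Delta)^\alpha\big[\tfrac12(\theta-\lambda)_+^2\big](x)$, which is the standard fractional analogue of the chain rule used here.)

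Next I would run the level-set iteration. Fix a time $t>0$ and work on the time interval $(t/2,t)$. Choose truncation levels $\lambda_k = M(1-2^{-k})$ for a constant $M$ to be determined, and set the nonlinear energy quantities $E_k := \sup_{s\in(t_k,t)}\|(\theta-\lambda_k)_+(s)\|_{L^2}^2 + \int_{t_k}^t\|(-\Delta)^{\alpha/2}(\theta-\lambda_k)_+\|_{L^2}^2\,ds$, with $t_k = t/2\,(1-2^{-k})$ a shrinking sequence of starting times; the shrinkage of the time interval produces, via the energy inequality averaged in the starting time $s$, a gain of a factor $2^k/t$. Using the fractional Sobolev embedding $W^{\alpha,2}(\R^2)\hookrightarrow L^{2/(1-\alpha)}$ together with interpolation, one upgrades $E_k$ to a spacetime norm of $(\theta-\lambda_k)_+$ with integrability exponent $2+\beta$ for some $\beta=\beta(\alpha)>0$. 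Finally, on the set where $\theta>\lambda_{k+1}$ one has $(\theta-\lambda_k)_+ > M2^{-k-1}$, so Chebyshev converts the higher-integrability norm into a bound $E_{k+1}\leq C\,\tfrac{2^{k}}{t}\,(M2^{-k})^{-\gamma}\,E_k^{1+\nu}$ for suitable exponents $\gamma,\nu>0$. This is a nonlinear recursion of the form $E_{k+1}\leq C^k B^{-\gamma k}\,t^{-1}M^{-\gamma}E_k^{1+\nu}$, which by the standard fast-geometric-convergence lemma forces $E_k\to 0$ provided $E_0$ is small enough; tracking constants, $E_0 = E_0(M,t) \leq \tfrac12\|\theta_0\|_{L^2}^2$ (by the global energy inequality \eqref{e:g_energy_2}) is below the required threshold precisely when $M\geq C\|\theta_0\|_{L^2}t^{-1/(2\alpha)}$, which is dimensionally forced by the scaling \eqref{eq:scaling}. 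Then $\theta\leq M$ a.e.\ at time $t$; applying the same argument to $-\theta$ yields \eqref{eq:thetaLinfty}.

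For the second statement, in the case \eqref{eq:u} we have $u = \mathcal R^\perp\theta$, and the Riesz transforms are bounded $L^\infty\to BMO$; hence $\|u(\cdot,t)\|_{BMO(\R^2)}\leq C\|\theta(\cdot,t)\|_{L^\infty}\leq C\|\theta_0\|_{L^2}t^{-1/(2\alpha)}$ by the bound just proved. The main obstacle is the first step: making the formal local energy inequality with the truncation $(\theta-\lambda)_+$ rigorous for merely Leray-Hopf solutions --- one must justify the chain-rule/commutator inequality for $(-\Delta)^\alpha$ at the level of regularity $\theta\in L^\infty_t L^2_x\cap L^2_t W^{\alpha,2}_x$, typically by working with the Caffarelli--Silvestre extension of Theorem~\ref{thm:CF} (where the truncation is a genuine convex post-composition and the extension inequality \eqref{e:minimum_ext_caff} handles the trace term) or by a suitable mollification/approximation argument, and to verify that the transport term genuinely vanishes for the truncated quantity given only $\div u = 0$ and $u\in L^\infty_t BMO$ (which a priori is established only a posteriori, so one argues by approximation from smooth solutions and passes to the limit, or cites the construction in \cite{ConstantinWu2009} directly). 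Since this is Theorem~2.1 of \cite{ConstantinWu2009}, I would in fact simply invoke it; the sketch above indicates the proof it rests on.
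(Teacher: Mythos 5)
The paper does not prove this statement: it is quoted from \cite{ConstantinWu2009} (Theorem~2.1), and the paper's only additions are the one-line observation after the statement that the proof relies solely on the level-set energy inequality and $\div u=0$, plus the caveat in Remark~\ref{rmk:levelset} that the global level-set inequalities \eqref{eq:levelset} do not obviously follow from Definition~\ref{def:LerayHopf} and should either be postulated or obtained via a vanishing-viscosity approximation. Your proposal ultimately does exactly the same thing --- invoke \cite{ConstantinWu2009} --- and you correctly identify both the De~Giorgi level-set iteration behind that theorem and the very technical obstruction (rigour of \eqref{eq:levelset} at Leray--Hopf regularity) that the paper isolates in Remark~\ref{rmk:levelset}; so this matches the paper in both substance and caveat.

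One small slip in your sketch, worth correcting. The pointwise C\'ordoba--C\'ordoba inequality you quote, $(\theta-\lambda)_+(-\Delta)^\alpha\theta \geq (-\Delta)^\alpha\big[\tfrac12(\theta-\lambda)_+^2\big]$, integrates to zero over $\R^2$ and is the one used for \emph{localized} level-set inequalities; it does not produce the dissipation term in \eqref{eq:levelset}. What the global level-set inequality actually needs is the integral estimate
\begin{equation}
\int_{\R^2} (\theta-\lambda)_+\,(-\Delta)^\alpha\theta\,dx \;\geq\; \int_{\R^2}\big\lvert(-\Delta)^{\alpha/2}(\theta-\lambda)_+\big\rvert^2\,dx\,,
\end{equation}
which follows from the singular-integral representation by splitting $\theta=\lambda+(\theta-\lambda)_+-(\lambda-\theta)_+$ and using that the cross term has a sign. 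Your conclusion (the displayed energy inequality for $(\theta-\lambda)_+$) is correct, only the cited justification is the wrong member of the C\'ordoba--C\'ordoba family. The rest of the iteration (interpolation to $L^{2(1+\alpha)}$, Chebyshev, fast geometric convergence, and scaling to identify $M\sim\|\theta_0\|_{L^2}t^{-1/(2\alpha)}$) and the $L^\infty\to BMO$ bound for the Riesz transforms are as in the reference.
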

Notice that in \cite{ConstantinWu2009} Theorem \ref{thm:LerayConstWu} is proven for the coupled system \eqref{eq:SQG}--\eqref{eq:u}. However, the proof of \eqref{eq:thetaLinfty} uses only the energy inequality on level sets and the assumption $\div u=0$, not  \eqref{eq:u}.

\begin{remark}[Global level set inequalities]\label{rmk:levelset} The proof of Theorem \ref{thm:LerayConstWu} relies on the validity of a global energy inequality for all level sets $\theta_{\lambda}:=( \theta-\lambda)_{+}$ with $\lambda \geq 0$ 
\begin{equation}\label{eq:levelset}
\frac{1}{2}\int_{\R^2} \theta_{\lambda}^2(x,t) \, dx +  \int_{s}^t \int_{\R^2} \lvert (-\Delta)^\frac{\alpha}{2} \theta_{\lambda}\rvert^2 (x, \tau) \, dx \, d \tau \leq \frac{1}{2} \int_{\R^2} \theta_{\lambda}^2 (x,s) \, dx
\end{equation}
for $s=0$, a.e. $s\in [0,T]$ and for every $t \in (s,T)$.  As a technical side-remark, we were not able to derive \eqref{eq:levelset} for Leray-Hopf solutions because of a lack of differentiability. We believe that the definition of Leray-Hopf solutions should \emph{a priori} be slightly modified and also require the validity of the global level set inequalities \eqref{eq:levelset} for all $\lambda \geq 0\,.$ In particular, without this augmented definition, Theorem \ref{thm:LerayConstWu} might not exclude the existence of unbounded solutions. 

At the same time,  the global level set inequalities \eqref{eq:levelset} are verified by the Leray-Hopf solutions obtained via Leray's construction: for instance, by adding a vanishing viscosity term $\varepsilon \Delta \theta$ on the right-hand side of \eqref{eq:SQG}, a Leray-Hopf solution can be obtained as the limit, for $\varepsilon \to 0$, of smooth solutions $\theta_{\varepsilon}$ obeying the level set inequalities \eqref{eq:levelset} for every $\varepsilon>0$ \cite{CH2021}. Since $f(x)= (x-\lambda)_{+}$ is Lipschitz, the strong (and weak) convergence of $\theta_{\varepsilon}$ to $\theta$ (and $(-\Delta)^{\alpha/2} \theta_{\varepsilon}$ to $(-\Delta)^{\alpha/2} \theta$  respectively),  is passed on to the level sets and allows to pass to the limit the associated level set inequality (see \cite{Thesis} for a detailed account). 
\end{remark}

At difference from the Navier-Stokes equations, 
the system  \eqref{eq:SQG}--\eqref{eq:u}  allows to formally derive infinitely many local energy inequalities of nonlinear type. Indeed, it has been observed in \cite[Section 3.3]{CH2021} that any smooth solution $\eta$ of \eqref{eq:SQG}--\eqref{eq:Cauchy} obeys for a.e. $t\in (0,T)$, all nonnegative test functions\footnote{That is, the function $\varphi$ vanishes when 
$|x| + y + |t|$ is large enough and if $t$ is sufficiently close to $0$, but it can be nonzero on some regions of $\{(x,y,t): y=0\}$.} $\varphi \in C^{\infty}_c ( \R^3_+ \times (0,T))$ with $\partial_y \varphi (\cdot,0,\cdot)= 0$ in $\R^2 \times (0,T)$ and for any $q \geq 2$
\begin{align}\label{eqn:suit-weak-tested}
\int_{\R^2} \varphi (x,0,t) &\lvert \eta\rvert^q(x,t) \, dx +4 \big(1- \tfrac{1}{q}\big)  c_{\alpha}\int_0^t \int_{\R^3_+} y^b|\overline{ \nabla} \lvert \eta^* \rvert^\frac{q}{2} |^2 \varphi \, dx \, dy \, ds  \\
	\leq&\;
	\int_0^t \int_{\R^2}\big( \lvert \eta\rvert^q \partial_t \varphi|_{y=0} + u \lvert \eta \rvert^q \cdot \nabla \varphi|_{y=0}\big)\, dx \, ds \nonumber + c_{\alpha} \int_0^t\int_{\R^3_+} y^b \lvert \eta^*\rvert^q \overline\Delta_b \varphi \, dx \, dy \, ds \,,
	\end{align}
where the constant $c_{\alpha}$ comes from Theorem~\ref{thm:CF}. We will subsequently refer to \eqref{eqn:suit-weak-tested} as \textit{the local energy inequality of order q}. 

Since linear transformations of solutions of \eqref{eq:SQG} still solve the very same equation (with unchanged velocity), the previous observation motivates the definition of so-called suitable Leray-Hopf solutions (sometimes called suitable weak solutions) as introduced in \cite[Definition 3.4]{CH2021}.

\begin{definition}\label{def:LH}
A  Leray-Hopf solution $(\theta,u)$ is called a suitable Leray-Hopf solution of \eqref{eq:SQG}--\eqref{eq:Cauchy}  if \eqref{eqn:suit-weak-tested} holds for a.e. $t\in (0,T)$, all nonnegative test functions $\varphi \in C^{\infty}_c ( \R^3_+ \times (0,T))$ with $\partial_y \varphi (\cdot,0,\cdot)= 0$ in $\R^2 \times (0,T)$, for all $q \geq 2$ and every linear transformation of the form $\eta:= (\theta-M)/L$ with  $L>0$ and $M \in \R\,.$
	
Correspondingly, we say that $\theta$ is a suitable Leray-Hopf solution of \eqref{eq:SQG}--\eqref{eq:u} if additionally \eqref{eq:u} holds. 
\end{definition}
The class of suitable Leray-Hopf solutions includes classical solutions and for $\alpha \in (0, \frac{1}{2})$, they exist (globally in time) from any $\theta_0 \in L^2(\R^2)$ \cite[Theorem 3.6]{CH2021}.

\subsection{Singular set}\label{s:singularset} We call a point $(x,t) \in \R^2 \times (0, \infty)$ a regular point of a Leray-Hopf weak solution $\theta$ of \eqref{eq:SQG}--\eqref{eq:u}  if there exists a neighbourhood of $(x,t)$ where $\theta$ is smooth. Due to the regularizing effect of the heat equation, this is equivalent to finding a neighbourhood of $(x,t)$ where $\theta$ is $C^\gamma$-H\"older continuous in space, uniformly in time,  for $\gamma >1-2\alpha$ (see \cite{ConstantinWu2008} or \cite[Lemma B.1]{CH2021} for the precise local statement).

We denote by ${\rm Reg} \, \theta$ the open set of regular points in spacetime. Correspondingly, we define the spacetime singular set ${\rm Sing} \, \theta:=[ \R^2 \times (0, \infty)] \setminus {\rm Reg} \, \theta \, .$

\subsection{Excess decay}\label{s:excessdecay} 
 Let $p \in (3, \infty)$ and $\sigma \in (0, 2\alpha)$ to be chosen later. For $r>0$ and $Q_r(x,t) \subseteq \mathbb{R}^2 \times (0, T)$, we define the excess at scale $r$ as 
\begin{align}
E(\theta, u; x, t, r)&:= E^S(\theta; x,t,r) +
E^V(u; x, t,r)+ 
E^{NL} (\theta; x, t,r) \label{def:excess}\\
 &:= \bigg(\,\mean{Q_r(x,t)} \!\!\! \lvert \theta(z,s)- (\theta)_{Q_r(x,t)} \rvert^p \, dz \, ds \bigg)^\frac{1}{p} \label{def:Es} + \bigg(\, \mean{Q_r(x,t)}\!\!\! \lvert u(z,s)- \left[u(s)\right]_{B_r(x)} \rvert^p \, dz \, ds \bigg)^\frac{1}{p} \nonumber
 \\
& \qquad+ \bigg(\,\mean{t-r^{2\alpha}}^t \sup_{R \geq \frac{r}{4}} \left(\frac{r}{R} \right)^{\sigma p} \bigg(\, \mean{B_R(x)} \lvert \theta(z,s) -  [\theta(s)]_{B_r(x)} \rvert^{ \frac{3}{2}} \, dz \bigg)^\frac{2p}{3}  \, ds \bigg)^\frac{1}{p}
 \, ,
\end{align}
 We recall the following excess decay result for suitable Leray-Hopf solutions from \cite[Proposition 4.1]{CH2021}.

\begin{prop}[Excess decay]\label{prop:excessdecay} Let $\alpha \in (0, \frac{1}{2})$, $\sigma \in (0, 2\alpha)$, {$p > \max \left\{\frac{1+\alpha}{\alpha}, \frac{2\alpha}{\sigma} \right\}$}, $c>0$ and $\gamma \in (0, \sigma -\frac{2\alpha}{p})$. Let $Q_r(x,t) \subseteq \R^2 \times (0, \infty)$ and let $(\theta, u)$ be a suitable Leray-Hopf solution to \eqref{eq:SQG}--\eqref{eq:Cauchy} such that $u(y,s) = \mathcal{R}^\perp \theta (y,s) + f(s)$ for some $f \in L^1([t-r^{2\alpha}, t])$ and
$$\left[u(s)\right]_{B_r(x)} = 0 \text{ for all } s \in [t-r^{2\alpha}, t]\,.$$
Then there exist a universal $\mu_0= \mu_0(\alpha, \sigma, p, c, \gamma)\in \left(0, \frac{1}{2}\right)$ such that the following holds: for every $\mu \in (0, \mu_0]$, there exists $\varepsilon_0= \varepsilon_0(\alpha, \sigma, p, c, \gamma, \mu) \in (0, \frac{1}{2})$ such that, if $E(\theta, u; x, t,r) \leq r^{1-2\alpha} \varepsilon_0\,,$ then the excess decays at scale $\mu$, that is
\begin{equation}\label{eq:excessdecay}
E(\theta, u; x,t, \mu r) \leq c {\mu}^\gamma E(\theta, u; x,t,r) \, .
\end{equation}
\end{prop}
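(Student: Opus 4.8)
The plan is to argue by contradiction via a blow-up/linearization scheme, which is the standard mechanism behind $\varepsilon$-regularity statements of this flavour. Suppose the conclusion fails: then for fixed $\mu \in (0,\mu_0]$ (with $\mu_0$ to be determined) there is a sequence of suitable Leray-Hopf solutions $(\theta_k, u_k)$ on cylinders $Q_{r_k}(x_k,t_k)$ with $u_k = \mathcal{R}^\perp \theta_k + f_k$, with $[u_k(s)]_{B_{r_k}(x_k)}=0$, such that $E_k := E(\theta_k,u_k;x_k,t_k,r_k) \le r_k^{1-2\alpha}\varepsilon_k$ with $\varepsilon_k \to 0$, but $E(\theta_k,u_k;x_k,t_k,\mu r_k) > c\,\mu^\gamma E_k$. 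Using the scaling invariance \eqref{eq:scaling}, I would rescale each solution to sit on the unit cylinder $Q_1$, and normalise $\theta_k$ by subtracting $(\theta_k)_{Q_1}$ and dividing by $E_k$; call the normalised objects $\vartheta_k$. By construction the rescaled excess of $(\vartheta_k, v_k)$ at scale $1$ equals $1$, which means $\vartheta_k$ is bounded in $L^p(Q_1)$, $v_k$ is bounded in $L^p(Q_1)$, and the tail term $E^{NL}$ is bounded — so $\vartheta_k$ has controlled growth on all larger balls in the weighted-average sense encoded by $\mathcal{T}$.

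The next step is to pass to a limit. From the uniform excess bound and the nonlocal tail control, together with the local energy inequality \eqref{eqn:suit-weak-tested} of some order $q > \tfrac1\alpha$ (which provides compactness — this is exactly the point emphasised in the discussion around Proposition~\ref{prop:energyiter}), I extract a subsequence with $\vartheta_k \rightharpoonup \vartheta$ weakly in the relevant spaces and $\vartheta_k \to \vartheta$ strongly in $L^p_{loc}$; the velocity fields converge via the continuity of $\mathcal{R}^\perp$ on the relevant spaces (modulo the spatially-constant corrections $f_k$, which drop out when one only looks at $u - [u(s)]_{B_r}$). Since the nonlinear term $u_k \cdot \nabla \theta_k$ in \eqref{eq:SQG} carries a factor that is $O(E_k) \to 0$ relative to the linear part after normalisation, the limit $\vartheta$ solves the \emph{linear} fractional heat equation $\partial_t \vartheta = -(-\Delta)^\alpha \vartheta$ on $\R^2 \times (-1,0]$, with a Caffarelli–Silvestre extension $\vartheta^*$ harmonic in the weighted sense \eqref{e:extension}. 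The smallness hypothesis $E_k \le r_k^{1-2\alpha}\varepsilon_k$ after rescaling reads: the \emph{rescaled} excess is $1$ but the solution is a small perturbation of a caloric function — so the blow-up limit is a genuine solution of the linear equation with unit-excess normalisation, and crucially with the zero-average constraint preserved.

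Now I would invoke the interior regularity of solutions of the linear fractional heat equation: such $\vartheta$ are smooth in the interior, so one has the Campanato-type decay estimate $E(\vartheta; 0,0,\mu) \le C_0 \mu^{\sigma} \cdot (\text{(something} \le 1)$ — more precisely, the excess of a caloric function decays at rate $\mu^1$ in the Taylor-expansion part and the tail term decays like $\mu^{\sigma}$ because of how $E^{NL}$ is built with the exponent $\sigma$; since $\gamma < \sigma - \tfrac{2\alpha}{p} < \sigma < 1$, one can choose $\mu_0$ small enough that $C_0 \mu_0^{\min(1,\sigma)} \le \tfrac{c}{2}\mu_0^{\gamma}$. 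By lower semicontinuity of the excess under the (weak/strong) convergence, $\liminf_k E(\vartheta_k; \mu) \le E(\vartheta; \mu) \le \tfrac{c}{2}\mu^{\gamma}$, contradicting $E(\vartheta_k;\mu) > c\mu^{\gamma}$ for large $k$ (recall the rescaled $E_k$ at scale $1$ is $1$). This yields the claim; the choice $\mu_0 = \mu_0(\alpha,\sigma,p,c,\gamma)$ comes from the caloric decay estimate and is independent of $\mu$, while $\varepsilon_0 = \varepsilon_0(\ldots,\mu)$ is extracted from the contradiction at the fixed scale $\mu$.

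The main obstacle is the compactness step: one must upgrade the weak $L^p$ bounds on $\vartheta_k$ to strong $L^p_{loc}$ convergence in order to pass to the limit in the quadratic nonlinearity and in the excess functional, and for this the \emph{linear} $L^2$-based local energy inequality is insufficient in the supercritical regime — one genuinely needs the nonlinear $L^q$-energy inequality \eqref{eqn:suit-weak-tested} with $q > \tfrac1\alpha$ to get enough compactness (this is precisely the restriction discussed in the introduction that prevents reaching the conjecturally optimal dimension bound). A secondary technical point is handling the nonlocal tail term $E^{NL}$ under the blow-up: the rescaling must be done so that the weighted sup over $R \ge r/4$ transforms correctly, and one needs a uniform tail bound to ensure that the limit $\vartheta$ does not grow too fast at spatial infinity, which is what makes the linear equation uniquely solvable and its excess estimate valid. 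Both of these are handled in \cite{CH2021}, so here I would simply cite \cite[Proposition 4.1]{CH2021} for the full argument, as the statement already indicates.
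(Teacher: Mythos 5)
The paper does not prove this proposition; it simply recalls \cite[Proposition 4.1]{CH2021} verbatim, adding only the remark that the decay at every scale $\mu \le \mu_0$ (rather than just $\mu = \mu_0$) follows from the same argument, as is classical for excess-decay results. Your proposal ends the same way — by deferring to \cite{CH2021} — and the blow-up/linearization sketch you give is a broadly accurate description of the argument in \cite{CH2021}, so the two take essentially the same route.

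A couple of points in the sketch are slightly off, though they do not change the overall picture. The inequality you want in the contradiction step is $\limsup_k E(\vartheta_k;\mu) \le E(\vartheta;\mu)$ — that is, \emph{upper} semicontinuity (in fact continuity) of the excess along the blow-up sequence, which requires strong $L^p_{loc}$ convergence of $\vartheta_k$ and of the corresponding velocity excesses, together with a separate uniform argument for the nonlocal term $E^{NL}$; writing ``lower semicontinuity'' gives the wrong direction. Also, the comparison rate for the caloric limit is not $\mu^{\min(1,\sigma)}$: the time-average of length $\mu^{2\alpha}$ inside $E^{NL}$ costs a factor $\mu^{-2\alpha/p}$ after taking the $1/p$-power, so the correct rate is $\mu^{\sigma-2\alpha/p}$, which is precisely why the hypothesis reads $\gamma < \sigma - \frac{2\alpha}{p}$. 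You correctly identify the two genuine technical hurdles — compactness via the nonlinear $L^q$-local energy inequality with $q>\frac{1}{\alpha}$, and passing the sup-over-radii tail term to the limit — and defer those, as the paper does, to \cite{CH2021}.
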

\begin{remark} In \cite{CH2021}, \eqref{eq:excessdecay} was stated only at scale $\mu=\mu_0$ (and not for all scales below $\mu_0$); as it is classical for excess decay results, the proof actually gives this stronger statement.
\end{remark}

\subsection{Tools to control the excess}\label{s:controlexcess}  Let $\alpha \in (0, 1)$ and $q \in [2, \frac{2}{1-\alpha}]$. There exist universal constants, depending only on $\alpha$ and $n$, such that for every $r>0$ and for every $f \in W^{\alpha, 2}(\R^n)$
\begin{equation}\label{eq:Poincare}
\bigg(\, \mean{B_{r}(x)} \lvert f - [f]_{B_{r}(x)} \rvert^q \, dz \bigg)^\frac{1}{q} \lesssim r^{\alpha - \frac{n}{2}} [f]_{W^{\alpha, p}(B_{r}(x))} \, \lesssim r^{\alpha - \frac{n}{2}} \bigg ( \int_{B_{4r/3}^*(x)} y^b \lvert \overline \nabla f^* \rvert^2  \, dz \, dy \bigg)^\frac{1}{2} \, .
\end{equation}
The first inequality is the Poincar\'e inequality, whereas the second one is due to a rewriting of the Gagliardo semi-norm (see  \cite[Lemma 2.3]{CH2021}). We also recall a nonlinear Poincar\'e inequality of parabolic type \cite[Lemma 6.2]{CH2021} and a control on $L^q$-excess of $u$ in terms of the excess and tails of $\theta$ \cite[Lemma 6.3]{CH2021}.  In both estimates, we change the enlarging factor of the radii to our convenience.

\begin{lemma} \label{lem:nonlinearPoincare} Let $\alpha \in (0,1) \,$ and let $(\theta, u)$ be a Leray-Hopf weak solution of \eqref{eq:SQG}--\eqref{eq:Cauchy}. We assume that $u(z,s)= \mathcal{R}^\perp \theta (z,s)  + f(s)$ for $f \in L^1_{loc}(\R)$ and that $\left[u(s)\right]_{B_r(x)} = 0$ for all $s \in [t-(2r)^{2\alpha}, t]$. 

There exists  $C=C(\alpha)\geq 1$ such that for any $q \in \big[2, \frac{2}{1-\alpha} \big]$ and for any $r>0$
\begin{align}
\frac{1}{r^{(1-2\alpha) + \frac{2}{q} + \alpha}} &\bigg(\int_{t- r^{2\alpha}}^{t} \bigg(\int_{B_r(x)} \lvert \theta - (\theta)_{Q_r(x,t)} \rvert^q \, dz \bigg)^\frac{2}{q} \, ds \bigg)^\frac{1}{2} \\
&\leq  C \Bigg (1+\bigg( \frac{1}{r^{2(1-2\alpha)+2+2\alpha}} \int_{Q_{4r/3}(x,t)} \lvert u - [u(s)]_{B_{4r/3}(x)} \rvert^2 \, dz \, ds \bigg)^\frac{1}{2} \Bigg) \, \mathcal{E}(\theta; x,t, 2r)^\frac{1}{2} \,,
\end{align}
where $\mathcal{E}(\theta;x,t,r)$ is a suitably localized version of the dissipative part of the total energy
\begin{equation}\label{def:curlyE}
\mathcal{E}( \theta; x,t,r) := \frac{1}{r^{2(1-2\alpha)+2}} \int_{Q_r^*(x,t)} y^b \lvert \overline \nabla \theta^* \rvert^2 \, dz \, dy \, ds \,. \\
\end{equation}
\end{lemma}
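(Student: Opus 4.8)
\textbf{Proof plan for Lemma \ref{lem:nonlinearPoincare}.}

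The strategy is to reconstruct the space–time norm of $\theta - (\theta)_{Q_r}$ from two ingredients: a purely spatial control of $\theta(\cdot,s) - [\theta(s)]_{B_r}$ by the Gagliardo seminorm (hence by the extension energy $\mathcal{E}$), valid pointwise in time, and a control of the \emph{discrepancy} between the spatial average $[\theta(s)]_{B_r}$ and the fixed space–time average $(\theta)_{Q_r}$, which is where the equation and the zero-velocity-average hypothesis enter through the nonlinear Poincar\'e inequality \cite[Lemma 6.2]{CH2021}. Throughout I may normalize $(x,t)=(0,0)$ and, by the scaling \eqref{eq:scaling}, reduce to $r=1$; the powers of $r$ in the statement are exactly the ones dictated by that scaling, so tracking them is bookkeeping rather than substance.

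First I would split
\begin{equation}
\bigg(\int_{B_r} \lvert \theta - (\theta)_{Q_r} \rvert^q \, dz \bigg)^{1/q} \leq \bigg(\int_{B_r} \lvert \theta - [\theta(s)]_{B_r} \rvert^q \, dz \bigg)^{1/q} + \lvert B_r \rvert^{1/q}\, \big\lvert [\theta(s)]_{B_r} - (\theta)_{Q_r} \big\rvert \,.
\end{equation}
For the first term, apply the Poincar\'e–extension inequality \eqref{eq:Poincare} with exponent $q \in [2, \frac{2}{1-\alpha}]$ at each fixed time $s$, raise to the power $2$, integrate in $s$ over $[t-r^{2\alpha},t]$, and take a square root; this produces exactly $r^{\alpha - n/2}$ times $\mathcal{E}(\theta;x,t,2r)^{1/2}$ up to the stated $r$-powers, with a constant $C(\alpha)$ and \emph{no} velocity term — this is the ``$1+$'' part of the right-hand side. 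For the second term, the quantity $g(s) := [\theta(s)]_{B_r} - (\theta)_{Q_r}$ has zero time average on $[t-r^{2\alpha},t]$, so its $L^2_s$ norm is controlled by the time-increments of $s \mapsto [\theta(s)]_{B_r}$; the weak formulation \eqref{eqn:weak} tested against a spatial cutoff of $B_r$ expresses $\frac{d}{ds}[\theta(s)]_{B_r}$ in terms of $\int u\theta\cdot\nabla\varphi$ and a dissipative term. This is precisely the content of the nonlinear parabolic Poincar\'e inequality of \cite[Lemma 6.2]{CH2021}; the hypothesis $[u(s)]_{B_r}=0$ lets one replace $u\theta$ by $(u - [u(s)]_{B_{4r/3}})(\theta - (\theta)_{Q_r})$ inside the flux integral (the average of $u$ being zero on the slightly larger ball up to harmless enlargement), generating the product of the $L^2$-excess of $u$ on $Q_{4r/3}$ and the $L^2_{t,x}$-norm of $\theta - (\theta)_{Q_r}$ that we started with.

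The resulting inequality has the schematic form $X \leq C(1 + V)\,\mathcal{E}^{1/2} + C\, V^{1/2}\, X^{1/2}$, where $X$ is the left-hand side and $V$ abbreviates the normalized $L^2$-excess of $u$; absorbing the last term via Young's inequality ($C V^{1/2} X^{1/2} \leq \frac12 X + \frac{C^2}{2} V$) and then using $V \lesssim (1+V)\mathcal{E}^{1/2}$ — which holds because $\mathcal{E}^{1/2}$ itself dominates a nontrivial lower bound coming from the nonzero oscillation, or more simply by noting $V \leq (1+V)\,\mathcal{E}(\theta;x,t,2r)^{1/2}$ after enlarging constants when $\mathcal{E} \geq c_0$ and treating the small-$\mathcal{E}$ regime separately — closes the estimate in the stated form. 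The main obstacle is the careful handling of this absorption together with the mismatch of radii and of the two notions of average: one must verify that enlarging $B_r$ to $B_{4r/3}$ (and $B_{2r}$ for $\mathcal{E}$) is compatible with the zero-average hypothesis, which is assumed on $B_r$ only for $s \in [t-(2r)^{2\alpha},t]$ — this is exactly why the hypothesis is stated on that larger time interval — and that the constant in \eqref{eq:Poincare}, valid only for $q \le \frac{2}{1-\alpha}$, is the binding constraint on the admissible range of $q$. Everything else is the scaling bookkeeping already fixed by \eqref{eq:scaling}.
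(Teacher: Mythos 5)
The core decomposition — split $\theta - (\theta)_{Q_r}$ into the spatial oscillation $\theta - [\theta(s)]_{B_r}$ plus the drift $g(s) := [\theta(s)]_{B_r} - (\theta)_{Q_r}$, control the first by \eqref{eq:Poincare} and the second through the weak formulation using $\div u = 0$ and $[u(s)]_{B_r}=0$ — is correct and is indeed how the cited Lemma 6.2 of \cite{CH2021} goes. The gap is in the final closure of the estimate.

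Your schematic form
\begin{equation}
X \leq C(1+V)\,\mathcal{E}^{1/2} + C\,V^{1/2}\,X^{1/2}
\end{equation}
followed by Young's absorption produces $X \lesssim (1+V)\,\mathcal{E}^{1/2} + V$, and the claimed patch $V \lesssim (1+V)\,\mathcal{E}^{1/2}$ is simply false: $V$ is the normalized $L^2$-excess of $u$ while $\mathcal{E}$ involves $\theta^*$ alone, and neither dominates the other (the ``small-$\mathcal{E}$ regime'' has no special structure). Moreover, even if the absorption went through, the bound you would obtain is \emph{quadratic} in the excess of $u$, whereas the stated right-hand side is $C(1+V^{1/2})\,\mathcal{E}^{1/2}$, i.e.\ \emph{linear} in the normalized excess $V^{1/2}$. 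So the endpoint of your argument does not match the lemma.

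The self-reference that forces the absorption in the first place is created by subtracting the spacetime average $(\theta)_{Q_r}$ inside the flux integral $\int u(\theta - c)\cdot\nabla\varphi$. Since $\div u = 0$ makes this integral independent of the constant $c$, the correct choice is $c = [\theta(s)]_{B_{4r/3}}$, the spatial average at the fixed time slice. With this choice the factor $\|\theta(\cdot,s) - [\theta(s)]_{B_{4r/3}}\|_{L^2(B_{4r/3})}$ is controlled directly by the Gagliardo seminorm (hence by $\mathcal{E}(\theta;x,t,2r)^{1/2}$ after integrating in $s$) via \eqref{eq:Poincare}, with no reappearance of $g$ on the right. Combined with the dissipative contribution of $\int \theta(-\Delta)^\alpha\varphi$, this gives $\|g\|_{L^2(I)} \lesssim (1+V^{1/2})\,\mathcal{E}^{1/2}$ directly, and the absorption step disappears altogether. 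Note also that the paper does not reprove this lemma; it cites \cite[Lemma 6.2]{CH2021} and merely adjusts the enlarging factors of the radii, so the genuinely new content you need to supply is precisely this careful choice of $c$ — which is the step your sketch gets wrong.
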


\begin{lemma}\label{lem:excessofu}  Let $\alpha \in (\frac{1}{4}, \frac{1}{2}),$ $q\geq 2 $ and  $\sigma \in (0, 1).$ Let $\theta \in L^q(\R^2 \times [t-(4r/3)^{2\alpha}, t])$ and consider a velocity field of the form $u(z,s)= \mathcal{R}^\perp \theta (z,s) + f(s)$ for $f \in L^1_{loc}(\R)\,.$  

There exists $C=C(q,\sigma )\geq 1$, uniform in $q$ and $\sigma$ away from $\sigma=1$ and $q=\infty$, such that
\begin{align}\label{eq:excessofu}
\frac{1}{r^{1-2\alpha}} &\bigg(\, \mean{Q_r(x,t)} \lvert u - [u(s)]_{B_r(x)} \rvert^q \, dz \, ds \bigg)^\frac{1}{q} \\
&\leq C \Bigg( \frac{1}{r^{1-2\alpha}} \bigg( \, \mean{Q_{{4r/3}}(x,t)} \lvert \theta- [\theta(s)]_{B_{{4r/3}}(x)} \rvert^q \, dz \,ds \bigg)^\frac{1}{q}+  \big(\mathcal{T}_q (\theta; x, t,  4r/3)\big)^\frac{1}{q} \Bigg)\,,
\end{align}
where the tail functional $\mathcal{T}_q$ is defined in \eqref{def:Tp'}.
\end{lemma}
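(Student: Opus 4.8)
The plan is to use that, modulo the time-dependent drift $f(s)$ — which does not affect the oscillation, since $u(z,s)-[u(s)]_{B_r(x)}=\mathcal R^\perp\theta(z,s)-[\mathcal R^\perp\theta(s)]_{B_r(x)}$, so we may take $f\equiv 0$ — the field $u(\cdot,s)=\mathcal R^\perp\theta(\cdot,s)$ is a genuine Calder\'on--Zygmund operator applied to $\theta(\cdot,s)\in L^q(\R^2)$ (true for a.e.\ $s$ by the integrability hypothesis). One then splits into a local part, controlled by the $L^q$-boundedness of the Riesz transform, and a tail part, controlled by the off-diagonal decay of the Riesz kernel $K$ ($|K(z)|\lesssim |z|^{-2}$, $|\nabla K(z)|\lesssim |z|^{-3}$ in $\R^2$). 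Throughout, fix $s$ and set $c=c(s):=[\theta(s)]_{B_{4r/3}(x)}$.

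First I would establish the pointwise representation: for $z,z'\in B_r(x)$,
\begin{equation*}
u(z,s)-u(z',s)=\big[\mathcal R^\perp\theta_1(z)-\mathcal R^\perp\theta_1(z')\big]+\int_{B_{4r/3}(x)^c}\big[K(z-w)-K(z'-w)\big]\big(\theta(w,s)-c\big)\,dw,
\end{equation*}
where $\theta_1:=(\theta(\cdot,s)-c)\mathbf 1_{B_{4r/3}(x)}\in L^q(\R^2)$. This is obtained by splitting the singular integral for $\mathcal R^\perp\theta(z,s)$ into the contributions of $B_{4r/3}(x)$ and its complement and, on each piece, adding and subtracting $c$: the constant produces the terms $c\,\mathrm{p.v.}\!\int_{B_{4r/3}(x)}K(z-w)\,dw$ and $c\int_{B_{4r/3}(x)^c}K(z-w)\,dw$, whose sum is the symmetric principal value of the odd homogeneous kernel $K$ over all of $\R^2$ and hence vanishes; the two $c$-contributions therefore cancel. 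The remaining tail integral converges absolutely because, for $z,z'\in B_r(x)$ and $w\notin B_{4r/3}(x)$, one has $|z-w|,|z'-w|\ge\tfrac14|x-w|$ and the segment $[z,z']$ avoids $w$, so $|K(z-w)-K(z'-w)|\lesssim r\,|x-w|^{-3}$. Averaging in $z'\in B_r(x)$ turns $u(z',s)$ into $[u(s)]_{B_r(x)}$ and $\mathcal R^\perp\theta_1(z')$ into $[\mathcal R^\perp\theta_1(s)]_{B_r(x)}$.

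Next I would bound the two pieces. For the local one, $\|\mathcal R^\perp\theta_1-[\mathcal R^\perp\theta_1(s)]_{B_r(x)}\|_{L^q(B_r(x))}\le 2\|\mathcal R^\perp\theta_1\|_{L^q(\R^2)}\le Cq\,\|\theta(\cdot,s)-c\|_{L^q(B_{4r/3}(x))}$, using the Calder\'on--Zygmund bound with its (at most) linear-in-$q$ operator norm. For the tail, I would decompose $B_{4r/3}(x)^c$ into the dyadic annuli $A_k=\{\,2^k\tfrac{4r}{3}\le|x-w|<2^{k+1}\tfrac{4r}{3}\,\}$ and write $R_{k+1}:=2^{k+1}\tfrac{4r}{3}$; using $|K(z-w)-K(z'-w)|\lesssim r|x-w|^{-3}$, the contribution of $A_k$ is $\lesssim \tfrac{r}{R_{k+1}}\mean{B_{R_{k+1}}(x)}|\theta(w,s)-c|\,dw$. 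Splitting $\tfrac{r}{R_{k+1}}=(\tfrac r{R_{k+1}})^{1-\sigma}(\tfrac r{R_{k+1}})^{\sigma}$, applying Jensen to pass to an $L^{3/2}$-average, and using $R_{k+1}\ge\tfrac{4r/3}{4}$, each term is $\le(\tfrac r{R_{k+1}})^{1-\sigma}G(s)$ with $G(s):=\sup_{R\ge r/3}(\tfrac{4r/3}{R})^{\sigma}\big(\mean{B_R(x)}|\theta(w,s)-c|^{3/2}dw\big)^{2/3}$; summing the geometric series (here $\sigma<1$ enters) gives $\sum_k(\tfrac r{R_{k+1}})^{1-\sigma}\le C(1-\sigma)^{-1}$, hence a bound $\le C_\sigma G(s)$ uniform in $z\in B_r(x)$.

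Finally I would assemble: raising to the power $q$, integrating over $z\in B_r(x)$ and $s\in[t-r^{2\alpha},t]$, dividing by $|Q_r(x,t)|$, and using $[t-r^{2\alpha},t]\subset[t-(4r/3)^{2\alpha},t]$ together with $|B_r|\simeq|B_{4r/3}|$, the local part becomes $\lesssim_q\big(\mean{Q_{4r/3}(x,t)}|\theta-[\theta(s)]_{B_{4r/3}(x)}|^q\,dz\,ds\big)^{1/q}$, while, since by the very definition \eqref{def:Tp'} one has $\int_{t-r^{2\alpha}}^t G(s)^q\,ds\le(4r/3)^{q(1-2\alpha)+2\alpha}\,\mathcal T_q(\theta;x,t,4r/3)$, the tail part becomes $\lesssim_\sigma r^{1-2\alpha}\,\mathcal T_q(\theta;x,t,4r/3)^{1/q}$. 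Dividing through by $r^{1-2\alpha}$ yields \eqref{eq:excessofu} with a constant $C(q,\sigma)$ that degenerates only as $q\to\infty$ (through the Calder\'on--Zygmund norm) and as $\sigma\to 1$ (through the dyadic series). The step requiring most care is the cancellation of the non-decaying constant $c$ between the near and far contributions together with the exact matching of the powers of $r$ and of the ratio $r/R$ with the definition of $\mathcal T_q$; this is bookkeeping rather than a genuine difficulty, the underlying argument being the standard off-diagonal singular-integral estimate, essentially as in \cite[Lemma 6.3]{CH2021}.
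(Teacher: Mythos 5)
Your proof is correct and follows the standard singular-integral scheme one would expect here; note that the paper does not reprove this lemma but cites it verbatim from \cite[Lemma 6.3]{CH2021}, so there is no in-paper argument to compare against, and your near/far kernel splitting with the $c$-cancellation, the $O(q)$ Calder\'on--Zygmund bound for the local part, and the dyadic tail estimate summed to $O((1-\sigma)^{-1})$ and matched against $\mathcal{T}_q$ is exactly the argument of the cited reference.
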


\subsection{Particle flow}\label{s:particleflow} A peculiar feature of the SQG equation is that the velocity can be forced, up to a subtracting a function of time, to have zero average on a fixed ball, uniformly in time. This is achieved by following the particle flow generated by the mean of the velocity. Thanks to the scaling-invariance \eqref{eq:scaling}, it suffices to construct the particle flow at unit-scale.

\begin{lemma}\label{lem:changeofvar} Let $\alpha \in (\frac{1}{4}, \frac 12)\,,$ $q\geq 2$ and $\sigma \in (0, 1)$. Let $(\theta, u)$ be a suitable Leray-Hopf solution of \eqref{eq:SQG} on $\R^2 \times [-4^{2\alpha},0]$. 
For a fixed center $(x, t) \in Q_1$, the particle flow $\Phi^{(x,t)}(u; \cdot)$ generated by the spatial average of $u$ on $B_{1/4}(x)$ at time $t$, is well-defined as the unique solution of 
\begin{equation}\label{eq:ODE}
\begin{cases}
 \frac{d}{ds} \, \Phi^{(x,t)}(u;s)&=\displaystyle \mean{B_\frac{1}{4}} u(z+\Phi^{(x,t)}(u;s) +x, s+t) \,dz \\
\Phi^{(x,t)}(u;0)&=0 \,
\end{cases}
\end{equation}
since the vector field in the right-hand side of the ODE is log-Lipschitz.
Correspondingly, if we set
\begin{align}
\Theta^{(x,t)}(z,s)&:=\theta(z+\Phi^{(x,t)}(u;s) +x,s+t)\,, \\
 U^{(x,t)}(z,s)&:= u(z+\Phi^{(x,t)}(u;s)+x, s+t) - \dot \Phi^{(x,t)}(u;s)\,,
\end{align}
then  $\big(\Theta^{(x,t)}, U^{(x,t)}\big)$ is a suitable Leray-Hopf solution of \eqref{eq:SQG} on $\R^2 \times [-1, 0]$ with 
$$[U^{(x,t)}(s)]_{B_{1/4}}=0 \text{ for } s \in[-1,0]\, .$$ 
 Moreover, there exists universal $\varepsilon_1=\varepsilon_1(q, \alpha) \in (0,\frac{1}{2})$ and $C_1(\alpha, p) \geq 1$  such that if 
\begin{equation}\label{eq:changeofvarhyp}
\bigg(\, \mean{Q_1(x,t)} \lvert u \rvert^q \,dz \, ds\bigg)^\frac{1}{q} \leq \varepsilon_1\, ,
\end{equation}
then 
\begin{equation}\label{eq:controlonflow}
\lvert \Phi^{(x,t)}(u;s)\rvert \leq \frac{1}{4} \qquad \text{ for } s \in [-1,0]\,.
\end{equation}
Finally, the control on the flow \eqref{eq:controlonflow}, allows to relate the fundamental quantities of $(\Theta^{(x,t)}, U^{(x,t)})$ with the ones of $(\theta, u).$ More precisely, there exists $C_1(\alpha, p) \geq 1$ universal such that 
\begin{align} 
E(\Theta^{(x,t)}, U^{(x,t)} ; 1/4) &\leq C_1 E(\theta, u;x,t,1) \label{eq:changeofvarexcess}\,,\\
\mathcal{A}(\Theta^{(x,t)}; 1/2) \leq C_1 \mathcal{A}(\theta; x,t,1) \qquad &\text{ and } \qquad \mathcal{T}_p(\Theta^{(x,t)}; 1/2) \leq C_1\mathcal{T}_p(\theta;x,t, 1) \, . \label{eq:changeofvar}
\end{align}
\end{lemma}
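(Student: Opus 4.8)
The plan is to establish Lemma~\ref{lem:changeofvar} in three stages: first the well-posedness of the particle flow ODE~\eqref{eq:ODE} and the verification that $(\Theta^{(x,t)},U^{(x,t)})$ is again a suitable Leray-Hopf solution with vanishing mean; second, the $C^0$-control~\eqref{eq:controlonflow} on the flow under the smallness hypothesis~\eqref{eq:changeofvarhyp}; third, the comparison estimates~\eqref{eq:changeofvarexcess}--\eqref{eq:changeofvar} for the fundamental quantities. For the first stage, I would invoke the global bound $u \in L^\infty_t BMO_x$ from Theorem~\ref{thm:LerayConstWu}: the spatial average $\dashint_{B_{1/4}} u(\cdot + y + x, s+t)\,dy$ is, as a function of $y$, log-Lipschitz (this is the standard fact that convolving a $BMO$ function with the normalized indicator of a ball produces a log-Lipschitz function, with seminorm controlled by $\|u\|_{L^\infty_t BMO_x}$), hence the Osgood condition holds and~\eqref{eq:ODE} has a unique solution on $[-1,0]$. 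That $(\Theta,U)$ solves~\eqref{eq:SQG} is a direct change of variables: writing $z \mapsto z + \Phi(s) + x$, $s \mapsto s+t$ turns $\partial_t\theta + u\cdot\nabla\theta$ into $\partial_s\Theta + (U)\cdot\nabla\Theta$ with $U = u(\cdot) - \dot\Phi$, and since $\dot\Phi$ depends only on $s$, $\div U = \div u = 0$ and the fractional Laplacian term is unaffected; the suitability (i.e.\ the local energy inequalities~\eqref{eqn:suit-weak-tested} for all linear transformations $\eta = (\Theta - M)/L$) is inherited because the change of variables is measure-preserving in space and the relevant test functions can be pulled back, exactly as in~\cite{CH2021}. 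The vanishing-mean property $[U(s)]_{B_{1/4}} = 0$ is immediate from the definition of $\dot\Phi$.

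For the second stage, I would set $g(s) := |\Phi^{(x,t)}(u;s)|$ and estimate $\dot g(s) \le |\dot\Phi(s)| \le \dashint_{B_{1/4}} |u(z + \Phi(s) + x, s+t)|\,dz$. The point is to bound the right-hand side by the quantity appearing in~\eqref{eq:changeofvarhyp}, modulo the fact that the ball $B_{1/4}(\Phi(s)+x)$ is displaced. As long as $g(s) \le 1/4$, the displaced ball $B_{1/4}(\Phi(s)+x)$ is contained in $B_{1}(x)$ (since $1/4 + 1/4 < 1$ with room to spare), so by Hölder, $\dashint_{B_{1/4}(\Phi(s)+x)} |u(\cdot,s+t)| \lesssim \left(\int_{B_1(x)} |u(\cdot,s+t)|^q\right)^{1/q}$ with a constant depending only on $q$ and the volumes. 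Integrating over $s \in [-1,0]$ and applying Hölder once more in time, $g(0) - g(-1) \le \int_{-1}^0 |\dot\Phi| \lesssim_{q} \left(\dashint_{Q_1(x,t)} |u|^q\right)^{1/q} \le \varepsilon_1$; choosing $\varepsilon_1$ small (so that the total variation of $g$ is $< 1/4$) and running a continuity/bootstrap argument starting from $g(0) = 0$ closes the bound~\eqref{eq:controlonflow} on all of $[-1,0]$. Here I am using that the flow is run both forward and backward from $s = 0$; a standard connectedness argument on the set $\{s : g(s) \le 1/4\}$ makes this rigorous.

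For the third stage, once~\eqref{eq:controlonflow} holds, the map $z \mapsto z + \Phi(s) + x$ sends $B_{1/4}$ into $B_{1/2}(x)$ and, more importantly for the excess, sends $Q_{1/4}$ (resp.\ $Q_{1/2}$) into $Q_1(x,t)$ with bounded overlap and comparable measure, uniformly in $s$. The quantities $E$, $\mathcal{A}$, $\mathcal{T}_p$ are all built from spatial or spacetime integrals of $|\theta - \text{average}|$ or $|u - \text{average}|$, and the change of variables is an isometry in the spatial variable for each fixed $s$; hence each integral over a domain for $(\Theta, U)$ is dominated, up to the universal constant coming from the volume ratios $|Q_1|/|Q_{1/4}|$ and from inclusion of the translated cylinders in $Q_1(x,t)$, by the corresponding integral for $(\theta, u)$. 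For the nonlocal tail $\mathcal{T}_p$ one additionally uses that the supremum over enlarging radii $R \ge r/4$ only improves under translation of the base ball (a translated ball $B_R(\Phi(s)+x)$ sits inside $B_{2R}(x)$ when $R \ge 1/4 \ge |\Phi(s)|$, absorbing the factor $(r/R)^{\sigma p}$ into the constant). For the Caffarelli-Silvestre-extension term in $\mathcal{A}$ and $\mathcal{B}$ I would use property (d) of Theorem~\ref{thm:CF}, namely that the extension commutes with spatial translations, so $(\Theta)^*(z,y,s) = \theta^*(z + \Phi(s) + x, y, s+t)$ and the Dirichlet-type energy transforms by the same spatial change of variables. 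Assembling these gives~\eqref{eq:changeofvarexcess}--\eqref{eq:changeofvar} with a single universal constant $C_1(\alpha,p)$.

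The main obstacle is the second stage: one must carefully track that the displaced balls stay inside a fixed reference ball throughout the flow, which is exactly the self-referential difficulty that the smallness assumption~\eqref{eq:changeofvarhyp} is designed to break via the bootstrap. A secondary technical point is verifying that the suitability (the full family of nonlinear local energy inequalities, for every linear transformation $\eta$) genuinely passes through the time-dependent change of variables — one must check that the class of admissible test functions $\varphi \in C^\infty_c(\R^3_+ \times (0,T))$ with $\partial_y\varphi(\cdot,0,\cdot) = 0$ is preserved under pullback by $(z,s) \mapsto (z - \Phi(s) - x, s - t)$, which it is since $\Phi$ is $C^1$ in $s$, but the extra drift $\dot\Phi(s)$ that appears must be shown to generate no new terms in~\eqref{eqn:suit-weak-tested} beyond replacing $u$ by $U$ (it does not, precisely because $\dot\Phi$ is divergence-free and the term $\int u|\eta|^q\cdot\nabla\varphi|_{y=0}$ is the only place $u$ enters).
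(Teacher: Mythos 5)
Your proposal is correct and follows essentially the same approach as the paper, which gives no detailed proof itself but instead refers to \cite[Lemma~5.1]{CH2021} and summarizes the three ingredients exactly as you present them: log-Lipschitz well-posedness via the $L^\infty_t BMO_x$ control, a bootstrap on the set $\{s: |\Phi(s)| \leq 1/4\}$ to convert the $L^q$ smallness into the confinement~\eqref{eq:controlonflow}, and then a routine change-of-variables comparison of the integral quantities once the flow stays in $B_{1/4}$. Your treatment of the tail $\mathcal{T}_p$ (ball inclusion $B_R(\Phi(s)+x)\subset B_{cR}(x)$ for $R\geq r/4$), of the added-and-subtracted spacetime averages, and of property (d) of Theorem~\ref{thm:CF} to transport the Caffarelli--Silvestre energy are all in line with what the paper takes for granted.

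One imprecision worth flagging: you write that the pullback of test functions is admissible ``since $\Phi$ is $C^1$ in $s$''. In fact, from the ODE~\eqref{eq:ODE} and the bound $u\in L^\infty_t BMO_x$ of Theorem~\ref{thm:LerayConstWu}, the right-hand side is a priori only $L^\infty$ in time, so $\Phi$ is Lipschitz (not $C^1$) in $s$. The pullback $\varphi(z-\Phi(s-t)-x,y,s-t)$ is then Lipschitz-in-time rather than $C^\infty$, and passing the local energy inequality~\eqref{eqn:suit-weak-tested} through this change of variables requires either a mollification of $\Phi$ or a preliminary observation that the inequality remains valid for Lipschitz-in-time test functions (both standard, and handled in \cite{CH2021}). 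Everything else — the continuity/bootstrap argument of Stage~2, the volume-ratio constants in Stage~3, and the observation that $\dot\Phi\cdot\nabla\Theta$ exactly cancels against the subtraction in $U$ — is sound.
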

%
%
%

The lemma follows from \cite[Lemma 5.1]{CH2021} and collects useful properties of the flow; several of them are simple consequences of the definitions. For instance, under the assumption \eqref{eq:changeofvarhyp} the vector field in the right hand side of \eqref{eq:ODE} is uniformly small, hence the flow does not move much in time $1$ as stated in \eqref{eq:controlonflow}. Moreover, \eqref{eq:changeofvarexcess}-- \eqref{eq:changeofvar} follows immediately from \eqref{eq:controlonflow} since the two excesses are then comparable; notice that the enlarging factor in \eqref{eq:changeofvarexcess}--\eqref{eq:changeofvar} can be chosen arbitrarily above $1$, provided that $\varepsilon_1$ is chosen sufficiently small in \eqref{eq:controlonflow}. We don't give the details. At difference from \cite{CH2021}, we will impose the quantitative control on the tilting effect of the flow \eqref{eq:changeofvarhyp} in terms of the $L^{p/(1+\alpha)}(Q_1(x,t))$-norm of the velocity (i.e. $q=p/(1+\alpha)$ in \eqref{eq:changeofvarhyp}).

The power of Lemma \ref{lem:changeofvar} lies in the fact that it allows to produce, from a given suitable Leray-Hopf solution of \eqref{eq:SQG}--\eqref{eq:u}, for every fixed center $(x,t)$ and scale $\mu$ a sequence of suitable Leray-Hopf solutions  $ \{\theta_j=\theta_j^{(x,t), \mu}, u_j= u_j^{(x,t), \mu} \}_{j \geq 0}$ of \eqref{eq:SQG} such that \ref{flow:i}--\ref{flow:iii} (see Section \ref{s:iteration}) hold. Each element of the sequence can be defined in two ways: either through \eqref{def:thetajcompact}, \eqref{def:ujcompact}, \eqref{eq:ODERj}, or inductively. We proceed in this second way in the following, since in iteration arguments it is more natural to drop one scale at a time. Indeed, the iterative construction is as follows. Fix a center $(x,t) \in Q_1$ and a scale $\mu \in (0, \frac{1}{4})$. 

\textit{The step $j=0$:} we set, with the notation of Lemma \ref{lem:changeofvar}, $x_0(s):=\Phi^{(x,t)}(u; s)$ and
\begin{align}
\theta_0^{(x,t), \mu} (z,s)&:= \theta(z+ x_0(s)+ x, s+t) \\
u_0^{(x,t), \mu} (z,s)&= u(z+x_0(s)+ x, s+t) - \dot x_0(s)\,.
\end{align}
Observe that this first step is independent of the choice of $\mu$; it just translates $(x,t)$ to the origin $(0,0)$ and it produces a new suitable Leray-Hopf solution whose velocity $u_0$ satisfies \ref{flow:i}--\ref{flow:ii}. Hereafter, the process will always be centred in $(0, 0)\,.$

\textit{The iterative step:} Assume that we have constructed $\theta_{j-1}^{(x,t), \mu}$ obeying \ref{flow:i}--\ref{flow:iii} , let us build $\theta_j^{(x,t), \mu}$  for some $j \geq 1$. We rescale $\theta_{j-1}$ and $u_{j-1}$ by $\mu$ according to \eqref{eq:scaling}, that is 
\begin{equation}\label{eq:theta_jrescaled}
\theta_{j-1, \mu}(z,s) := \mu^{2\alpha-1} \theta( \mu z, \mu^{2\alpha} s ) \qquad \text{ and } \qquad u_{j-1, \mu}(z,s):= \mu^{2\alpha-1} u_{j-1}(\mu z , \mu^{2\alpha} s)\,.
\end{equation}
Next, we apply Lemma \ref{lem:changeofvar} to $(\theta_{j-1, \mu}, u_{j-1, \mu})$ at the point $(0,0)$: in other words, defining $x_j(s):= \Phi^{(0,0)}(u_{j-1, \mu}; s)$, we set
\begin{align}\label{eq:thetaj-1mu}
\theta_j^{(x,t), \mu} (z,s)&:= \theta_{j-1, \mu}(z+ x_j(s), s) = \mu^{2\alpha-1} \theta_{j-1}(\mu z + \mu x_j(s), \mu^{2\alpha } s)\,, \\ \label{eq:uj-1mu}
u_j^{(x,t), \mu} (z,s)&= u_{j-1, \mu} (z+x_j(s),s) - \dot x_j(s)= \mu^{2\alpha-1} u_{j-1}(\mu z + \mu x_j(s) , \mu^{2\alpha} s) - \dot x_j(s) \,.
\end{align}
The validity of \ref{flow:i}--\ref{flow:iii} is an immediate consequence of Lemma \ref{lem:changeofvar} and the inductive hypothesis 
Finally, it is easy to verify inductively that $\theta_j$ and $u_j$ are of the form \eqref{def:thetajcompact} and \eqref{def:ujcompact}, if we set 
\begin{equation}\label{def:Rj}
R_{j}^{(x,t), \mu}(s)=R_j(s):= \sum_{k=0}^j \mu^k x_k( \mu^{2\alpha(k-j)} s)\,,
\end{equation}
and that $R_j$ solves \eqref{eq:ODERj}.

\section{Decay of the kinetic part of the nonlinear local energy: proof of Proposition~\ref{prop:energyiter}}
We introduce the following scaling-invariant analogues of the excess quantities $E^S$ and $E^V$ (defined in \eqref{def:Es} and \eqref{def:excess}): 
\begin{align}
\mathcal{C}(\theta;x,t,r)&:= \frac{1}{r^{p(1-2\alpha) + 2 +2\alpha}} \int_{Q_r(x,t)} \lvert \theta - (\theta)_{Q_r(x,t)} \rvert^p \, dz \, ds\,, \label{def:C}\\
\mathcal{D}(u; x,t,r)&:= \frac{1}{r^{p(1-2\alpha)+2+2\alpha}} \int_{Q_r(x,t)} \lvert u -[u]_{B_r(x)} \rvert^p \, dz \, ds \,.\label{def:D}
\end{align}

We prove a preliminary ``interpolation" result which controls $\mathcal{C}$ and $\mathcal{D}$ in terms of $\mathcal{A}$, $\mathcal{B}$ and $\mathcal{T}_p$. It is not a mere interpolation result, but uses already the equation through Lemma \ref{lem:nonlinearPoincare}.

\begin{lemma}\label{lem:interpol} Let $\alpha \in \big(\frac{1}{4}, \frac{1}{2}\big)$, $p> \frac{1+\alpha}{\alpha}$ and  $\sigma \in (0, 2\alpha) \,.$ There exists a universal $C_2=C_{2}(p) \geq 1$, uniform in $p$ away from $p= \infty$,  such that for every  Leray-Hopf solution $(\theta, u)$  of \eqref{eq:SQG}--\eqref{eq:Cauchy} with velocity field of the form $u= \mathcal{R}^\perp \theta +f$ for some $f \in L^1_{loc}(\R)$, it holds
\begin{equation}\label{e:interpol1}
\mathcal{D}(u;r) \leq C_2 \big( \mathcal{A}(\theta; 3r/2)^\alpha \mathcal{B}(\theta;3r/2) + \mathcal{T}_p(\theta; 3r/2) \big) \,.
\end{equation}
If additionally $
\left[u(s)\right]_{B_r(x)} = 0$ for all $s \in [t-(2r)^{2\alpha}, t]$, we also have
\begin{equation}\label{e:interpol2}
\mathcal{C}(\theta;r) \leq C_2 \, \mathcal{A}(\theta;2r)^\alpha \mathcal{B}(\theta;2r)\Big(1+ \mathcal{A}(\theta;2r)^{2\alpha/p} \mathcal{B}(\theta;2r)^{2/p}+ \mathcal{T}_p(\theta;2r)^{2/p} \Big) \,.
\end{equation}
\end{lemma}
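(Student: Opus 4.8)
The plan is to prove both estimates by a combination of interpolation in Lebesgue spaces (in space and time separately), the Sobolev/Poincaré control \eqref{eq:Poincare} linking the Gagliardo seminorm to the weighted Dirichlet energy of the Caffarelli--Silvestre extension, and the nonlinear parabolic Poincaré inequality of Lemma~\ref{lem:nonlinearPoincare}. The guiding heuristic is the one already described in Section~\ref{s:strategy}: the quantity $\mathcal{A}$ controls $|\theta - (\theta)_{Q_r}|^{p/(2(1+\alpha))}$ in $L^\infty_s L^2_z$, while $\mathcal{B}$ controls the same power in $L^2_s W^{\alpha,2}_z$; interpolating and using the $W^{\alpha,2}\hookrightarrow L^{2/(1-\alpha)}$ Sobolev embedding gives control of $|\theta - (\theta)_{Q_r}|^{p/(2(1+\alpha))}$ in a spacetime Lebesgue space whose exponent, after unwinding, is exactly what is needed to bound the $L^p$-average appearing in $\mathcal{C}$ and $\mathcal{D}$. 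The tail term $\mathcal{T}_p$ enters because all these averages are taken with respect to the \emph{spacetime} average $(\theta)_{Q_r}$ rather than a slice average $[\theta(s)]_{B_r}$, and switching between the two costs a tail contribution; likewise, passing from $\theta$ to $u = \mathcal{R}^\perp\theta + f$ (for \eqref{e:interpol1}) costs a tail because the Riesz transform is nonlocal — this is precisely the content of Lemma~\ref{lem:excessofu}.

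For \eqref{e:interpol1}, first I would apply Lemma~\ref{lem:excessofu} with $q = p$ and the radius $3r/2$ in place of $r$ (adjusting the enlarging factor), which bounds $\mathcal{D}(u;r)$, up to constants, by the analogous $L^p$-average of $\theta$ minus its slice averages on a slightly larger cylinder, plus $\mathcal{T}_p(\theta; 3r/2)$. Note we do \emph{not} need the zero-average hypothesis on $u$ here, consistent with the statement. It then remains to control that $L^p$-average of $\theta$ by $\mathcal{A}^\alpha\mathcal{B}$. For this, write $\theta - [\theta(s)]_{B}$, decompose $|\theta - [\theta(s)]_B|^p = (|\theta - [\theta(s)]_B|^{p/(2(1+\alpha))})^{2(1+\alpha)}$, and interpolate the inner factor between its $L^\infty_s L^2_z$ norm (which is $\mathcal{A}$-type, after absorbing the $L^\infty$ prefactor of $\theta$ into the exponent bookkeeping) and its $L^2_s L^{2/(1-\alpha)}_z$ norm (obtained from $\mathcal{B}$ via \eqref{eq:Poincare}). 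The interpolation exponents must be chosen so that the total spacetime integrability is $2(1+\alpha)$; tracking the scaling weights $r^{\cdot}$ through this computation is where one must be careful, but it is routine, and the outcome is the product structure $\mathcal{A}^\alpha \mathcal{B}$ with the stated power $\alpha$ on $\mathcal{A}$.

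For \eqref{e:interpol2}, the extra zero-average assumption $[u(s)]_{B_r(x)} = 0$ allows the use of Lemma~\ref{lem:nonlinearPoincare}, which controls the $L^2_s L^q_z$ norm of $\theta - (\theta)_{Q_r}$ (with $q = 2/(1-\alpha)$, the Sobolev exponent) by $\mathcal{E}(\theta;x,t,2r)^{1/2}$ times a factor $1 + (\text{$L^2$-average of }u)$. The $L^2$-average of $u - [u(s)]_B$ can in turn be bounded, again by Lemma~\ref{lem:excessofu} with $q=2$, or more directly by Jensen from the $L^p$-control, hence ultimately by $\mathcal{C}^{1/p}$ or $\mathcal{A}^{\cdot}\mathcal{B}^{\cdot} + \mathcal{T}_p^{\cdot}$; this is the source of the parenthetical correction factor $\big(1 + \mathcal{A}^{2\alpha/p}\mathcal{B}^{2/p} + \mathcal{T}_p^{2/p}\big)$ in \eqref{e:interpol2}. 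One then interpolates $|\theta - (\theta)_{Q_r}|$ between the $L^\infty_s L^{p/(1+\alpha)}_z$ bound coming directly from $\mathcal{A}$ and the $L^2_s L^{2/(1-\alpha)}_z$ bound just obtained, choosing exponents so the result lands in $L^p_{z,s}$, which is the definition of $\mathcal{C}$. The main obstacle, and the step requiring the most care, is this last bookkeeping: one must verify that with $q = 2/(1-\alpha)$ the arithmetic of Hölder exponents closes (this is where $\alpha > \tfrac14$, equivalently $p/(1+\alpha) < 2/(1-\alpha)$ being compatible with $p > (1+\alpha)/\alpha$, is used so the two endpoint exponents straddle $p$), and that the scaling powers of $r$ produced by \eqref{eq:Poincare}, by Lemma~\ref{lem:nonlinearPoincare}, and by the interpolation all combine to give exactly the normalization $r^{p(1-2\alpha)+2+2\alpha}$ built into $\mathcal{C}$ — in particular checking that the $L^\infty$-prefactors $\norm{\theta}_{L^\infty}^{p/(1+\alpha)-2}$ hidden in $\mathcal{A}$ and $\mathcal{B}$ recombine to the correct power after the interpolation, so that no uncontrolled $L^\infty$-norm of $\theta$ survives.
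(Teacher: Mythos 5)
Your overall architecture is exactly the paper's: pass from $u$ to $\theta$ via Lemma~\ref{lem:excessofu} for \eqref{e:interpol1}, interpolate the resulting $L^p$-average against the kinetic and dissipative parts of the nonlinear energy, and for \eqref{e:interpol2} use the zero-average hypothesis through Lemma~\ref{lem:nonlinearPoincare} and feed the $L^2$-excess of $u$ back through Jensen and \eqref{e:interpol1} to produce the parenthetical correction factor. Your treatment of \eqref{e:interpol1} via the nonlinear power $g:=|\theta-[\theta(s)]_B|^{p/(2(1+\alpha))}$ is correct, and it is in fact identical (after integrating the fixed-time H\"older inequality) to the paper's split.

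However, your description of the interpolation step for \eqref{e:interpol2} has a concrete arithmetic error. You propose to interpolate $f:=\theta-(\theta)_{Q_r}$ itself between $L^\infty_s L^{p/(1+\alpha)}_z$ and $L^2_s L^{2/(1-\alpha)}_z$ and claim that ``the two endpoint exponents straddle $p$,'' attributing this to $\alpha>\tfrac14$. This is not what happens: a two-endpoint Riesz--Th\"orin interpolation between these spaces reaches $L^q_s L^q_z$ only with $q=2(1+\alpha)$, and since $\alpha<\tfrac12$ one has $p>\tfrac{1+\alpha}{\alpha}>2(1+\alpha)$, so $L^p_{z,s}$ is strictly outside the interpolation segment. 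The fix is exactly the device you already used for \eqref{e:interpol1}: work with the power $|f|^{p/(2(1+\alpha))}$ (equivalently, add $L^\infty_{z,s}$ as a third interpolation endpoint). Doing so peels off precisely the factor $\|\theta\|_{L^\infty}^{\frac{p}{1+\alpha}-2}$ that is \emph{built into the definition of} $\mathcal{B}$ --- not into $\mathcal{A}$, contrary to your parenthetical remark, since \eqref{def:A} contains no $L^\infty$-weight. Finally, $\alpha>\tfrac14$ plays no role in this interpolation arithmetic: it is required by Lemma~\ref{lem:excessofu} (which controls the nonlocal Riesz-transform excess and the tails), and this is the step that brings $\mathcal{T}_p$ into \eqref{e:interpol1}.
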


\begin{proof} By translation and scaling invariance, we may assume that $(x,t)=(0,0)$ and that $r=1\,.$ In the following, the constant $C$ may change line by line. By Lemma \ref{lem:excessofu}, by H\"older and the Poincar\'e inequality \eqref{eq:Poincare} ( changing the enlarging factor of the radius to $9/8$ to our convenience)
\begin{align}
&\int_{Q_1} \lvert u - [u(t)]_{B_1} \rvert^p \, dx \, dt  \leq C \int_{Q_{4/3}} \lvert \theta - [\theta(t)]_{B_{4/3}} \rvert^p \, dx \, dt + C \mathcal{T}_p(\theta; 4/3)  \\
&\leq C \int_{-\left( \frac{4}{3}\right)^{2\alpha}}^0 \bigg( \int_{B_{4/3}}\lvert \theta - [\theta(t)]_{B_{4/3}} \rvert^\frac{p}{1+\alpha}  \bigg)^\alpha \bigg( \int_{B_{4/3}} \lvert  \theta - [\theta]_{B_{4/3}} \rvert^\frac{p}{(1+\alpha)(1-\alpha)} \, dx \bigg)^{1-\alpha} \, dt + C \mathcal{T}_p(\theta; 4/3) \\
&\leq C \Big( \mathcal{A}(\theta; 4/3)^\alpha (2 \norm{\theta}_{L^\infty(Q_{3/2})})^{\frac{p}{1+\alpha}-2} \int_{Q_{3/2}^*} y^b \lvert \overline{\nabla} \theta^* \rvert^2 \, dx \, dy \, dt + \mathcal{T}_p(\theta; 4/3)\Big) \\
&\leq C \Big( \mathcal{A}(\theta; 3/2)^\alpha \mathcal{B}(\theta; 3/2) + \mathcal{T}_p(\theta; 3/2)\Big) \, .
\end{align}
As for second inequality, we estimate by H\"older, Lemma \ref{lem:nonlinearPoincare} and by \eqref{e:interpol1}
\begin{align}
\int_{Q_1} \lvert \theta &- (\theta)_{Q_1} \rvert^p \, dx \, dt \leq \int_{-1}^0 \bigg(\int_{B_1} \lvert \theta - (\theta)_{Q_1} \rvert^\frac{p}{1+\alpha} \, dx \bigg)^\alpha \left( \int_{B_1} \lvert \theta - (\theta)_{Q_1} \rvert^\frac{p}{(1+\alpha)(1-\alpha)}  \, dx \right)^{1-\alpha} \, dt \\
&\leq C \mathcal{A}(\theta; 1)^\alpha (2\norm{\theta}_{L^\infty(Q_{1})})^{\frac{p}{1+\alpha}-2} \bigg( 1 + \int_{Q_{4/3}} \lvert u - [u(t)]_{B_{4/3}} \rvert^2 \, dx \, dt  \bigg) \mathcal{E}(\theta; 2)  \\
&\leq C\mathcal{A}(\theta; 2)^\alpha \mathcal{B}(\theta; 2) \Big(1+ \mathcal{A}(\theta; 2)^{2\alpha/p} \mathcal{B}(\theta; 2)^{2/p}+  \mathcal{T}_p(\theta; 2)^{2/p}\Big)\,.
\end{align}
\end{proof}

\begin{proof}[Proof of Proposition~\ref{prop:energyiter}]
By translation and scaling invariance, we may assume $(x,t)=(0,0)$ and $r=1$. Let $K\geq 2$ be given. Let $\mu_0\in (0, \frac{1}{4}]$ yet to be chosen. We consider a scale $\mu \in (0, \mu_0]$ and $\varepsilon \in (0,1)\,.$ We assume that $[u(t)]_{B_{1/2}}=0$ for $t \in [-1, 0]$ and $\mathcal{B}(\theta; 1) \leq \delta$
for a $\delta \in (0,1)$ yet to be chosen in function of $\mu, p, \eps, K\,.$ 

To lighten the notation in this proof, we do not indicate the dependence of all fundamental quantities of the solution $\theta$ and write, for instance, $\mathcal{A}(1)$ instead of $\mathcal{A}(\theta;1)\,.$

\textit{Step 1: We chose $\mu_0$ small enough to reduce the size of the tails $\mathcal{T}_p$ at scale $\mu\,.$}\\
By adding and subtracting $[\theta(t)]_{B_1}$ at fixed time, we have by the triangular inequality
\begin{align}
\mathcal{T}_p( \mu) 
&\leq 4^{p}  \mu^{-(p(1-2\alpha)+2\alpha)} \int_{-\mu^{2\alpha}}^0  \sup_{R \geq \frac{\mu}{4}} \left( \frac{\mu}{R} \right)^{\sigma p} \left(\mean{B_R} \lvert \theta - [\theta(t)]_{B_{1/4}} \rvert^\frac{3}{2} \, dx \right)^\frac{2p}{3} \, dt\,.
\end{align}
We split the supremum over radii $ \{ R \geq \frac{1}{4}\}$ and $\{ \frac{1}{4} \geq R \geq \frac \mu 4 \}\,.$ As for the first one, we have 
\begin{align}
\int_{-\mu^{2\alpha}}^0  \sup_{R \geq \frac{1}{4}} \left( \frac{\mu}{R} \right)^{\sigma p} \left(\mean{B_R} \lvert \theta - [\theta(t)]_{B_{1/4}} \rvert^\frac{3}{2} \, dx \right)^\frac{2p}{3} \, dt &\leq  2^p \mu^{\sigma p } \, \mathcal{T}_p(1) \,.
\end{align}
As for the second supremum, we have by H\"older and the Poincar\'e inequality \eqref{eq:Poincare}, with a constant $C=C(\alpha, p)$ changing line by line, that
\begin{align}
\int_{-\mu^{2\alpha}}^0  &\sup_{\frac{1}{4} \geq R \geq \frac{\mu}{4}} \bigg( \frac{\mu}{R} \bigg)^{\sigma p} \bigg(\mean{B_R} \lvert \theta - [\theta(t)]_{B_{1/4}} \rvert^\frac{3}{2}  \, dx\bigg)^\frac{2p}{3}  \, dt
\leq 4^{\sigma p +2  }\mu^{-2} \int_{Q_{1/4}}\lvert  \theta- [\theta(t)]_{B_{1/4}} \rvert^p \, dx \, dt \\
&\leq C \mu^{-2} \int_{-\left( \frac{1}{4} \right)^{2\alpha}}^0 \bigg( \int_{B_{1/4}} \lvert \theta - [\theta(t)]_{B_{1/4}} \rvert^\frac{p}{1+\alpha} \, dx \bigg)^\alpha \bigg( \int_{B_{1/4}} \lvert \theta - [\theta(t)]_{B_{1/4}} \rvert^\frac{p}{(1+\alpha)(1-\alpha)} \, dx \bigg)^{1-\alpha} \, dt \\
&\leq C \mu^{-2} \mathcal{A}(1)^\alpha \norm{\theta}_{L^\infty(Q_{1/4})}^{\frac{p}{1+\alpha}-2} \int_{-\left( \frac{1}{4} \right)^{2\alpha}}^0 \bigg( \int_{B_{1/4}} \lvert \theta - [\theta(t)]_{B_{1/4}} \rvert^\frac{2}{1-\alpha} \, dx \bigg)^{1-\alpha} \, dt\\
&\leq C \mu^{-2} \mathcal{A}(1)^\alpha \mathcal{B}(1) \, .
\end{align}
Using also Young (with exponents $(p/2, p/(p-2) )$), we conclude that 
\begin{align}
\mathcal{T}_p( \mu)^\frac{2}{p} &\leq 8 \mu^{2(\sigma-(1-2\alpha))-\frac{4\alpha}{p}} \mathcal{T}_p( 1)^\frac{2}{p} + C \mu^{-(2(1-2\alpha)+\frac{4(1+\alpha)}{p})} \mathcal{A}(1)^{\frac{2\alpha}{p}} B(1)^\frac{2}{p} \nonumber
\\ 
&\leq \frac{1}{4K} \mathcal{T}_p( 1)^\frac{2}{p} + C \mu^{-(2(1-2\alpha)+\frac{4(1+\alpha)}{p})} \mathcal{A}(1)^{\frac{2\alpha}{p}} \mathcal{B}(1)^\frac{2}{p}  \nonumber\\
&\leq  \frac{1}{4K} \mathcal{T}_p( 1)^\frac{2}{p} +\frac{1}{4K} \mathcal{A}(1)^\alpha + C K^{-1} \Big( \mu^{-(2(1-2\alpha)+\frac{4(1+\alpha)}{p})} K \mathcal{B}(1)^\frac{2}{p}\Big)^\frac{p}{p-2}\nonumber \\
&\leq  \frac{1}{4K} \Big(\mathcal{T}_p( 1)^\frac{2}{p} + \mathcal{A}(1)^\alpha \Big) + C \mu^{-8} K^2 \delta^\frac{2}{p-2} \,,\label{eq:iter1}
\end{align}
where in the second line we have chosen $\mu_0$ small enough, only depending on $K, \alpha, \sigma$ and $p$.

\textit{Step 2: We use the local energy inequality of order $\frac{p}{1+\alpha}$ to reduce the size of $\mathcal{A}(\theta; \mu)$.} \\
We use the local energy inequality with $\eta:= \theta- (\theta)_{Q_{1/4}}\,,$ $q=p/(1+\alpha) \,$ and a test function $\varphi \in C^\infty_c(\R^3_+ \times (0, \infty))$ such that $\varphi \equiv 1$ on $Q_{3/8}^*$, $\supp  \varphi\subset Q_{1/2}^*$, $\varphi$ constant in $y$ for small $y\,$ and $\norm{\varphi}_{C^2}$ smaller than a universal constant. Using H\"older to bound the nonlinear term and that $\eta^* = \theta^*-(\theta)_{Q_{1/4}}$,  we obtain, by varying $t \in [-(1/4)^{2\alpha}, 0]$, the following bound with $C=C(\alpha, p) \geq 1\,:$
\begin{align}
\mathcal{A}( 1/4) \leq C \Bigg( &\int_{Q_{1/2}} \lvert \theta - (\theta)_{Q_{1/4}} \rvert^\frac{p}{1+\alpha} + \bigg(\int_{Q_{1/2}} \lvert \theta - (\theta)_{Q_{1/4}} \rvert^p \bigg)^\frac{1}{1+\alpha} \bigg( \int_{Q_{1/2}} \lvert u - [u]_{B_{1/2}} \rvert^\frac{1+\alpha}{\alpha} \bigg)^\frac{\alpha}{1+\alpha} \nonumber \\
&+ \int_{Q_{1/2}^*} y^b \lvert \theta^* - (\theta)_{Q_{1/4}} \rvert^\frac{p}{1+\alpha} \Bigg) \nonumber \,.
\end{align}
Adding and subtracting $(\theta)_{Q_{1/4}}$ and using H\"older once more (and the assumption $p> \frac{1+\alpha}{\alpha}$), we deduce that 
\begin{align}
\mathcal{A}( \mu)^\alpha
&\leq C \mu^{-[\frac{p}{1+\alpha}(1-2\alpha)+2]\alpha} \Big(\mathcal{A}( 1/4)  + \lvert (\theta)_{Q_\mu} - (\theta)_{Q_{1/4}} \rvert^\frac{p}{1+\alpha} \Big)^\alpha \\
&\leq C \mu^{-[\frac{p}{1+\alpha}(1-2\alpha)+4]\alpha} \mathcal{A}( 1/4) ^\alpha\\
&\leq C \mu^{-[\frac{2p}{5}+4]\alpha} \mathcal{A}( 1/4)^\alpha
\\
& \leq C  \mu^{-[\frac{2p}{5}+4]\alpha}
 \Bigg( \bigg(\int_{Q_{1/2}} \lvert \theta - (\theta)_{Q_{1/2}} \rvert^\frac{p}{1+\alpha} \bigg)^\alpha +\mathcal{C}(1/2)^\frac{\alpha}{1+\alpha}\mathcal{D}( 1/2)^\frac{\alpha}{p} \\
 &\qquad \qquad \qquad+ \bigg(\int_{Q_{1/2}^*} y^b \lvert \theta^* - (\theta)_{Q_{1/2}} \rvert^\frac{p}{1+\alpha} \bigg)^\alpha \Bigg) =: \text{I}+ \text{II}+ \text{III} \, .
\end{align}
By Young's convolution inequality and Theorem \ref{thm:CF}, we have, using that the $L^1$-norm of the Poisson kernel $P$ is normalised to $1$, that 
\begin{equation}\label{eq:linfty}
\norm{\theta^*}_{L^\infty(Q_{1/2}^*)}\leq \norm{\theta^*}_{L^\infty(\R^3_+ \times [-(1/2)^{2\alpha}, 0])} \leq \norm{\theta}_{L^\infty(\R^2 \times [-1, 0])} 
\end{equation}
{and hence, after adding and subtracting $\theta$ in $\text{III}$ and using that the weight $y^b$ is locally integrable, we can estimate with $C=C(\alpha, p)\geq 1$
\begin{equation}
\text{I} + \text{III} 
\leq C \mu^{-[\frac{2p}{5}+4]\alpha} 
\Bigg(  \bigg(\int_{Q_{1/2}} \lvert \theta - (\theta)_{Q_{1/2}} \rvert^\frac{p}{1+\alpha} \bigg)^\alpha + \norm{\theta}_{L^\infty(\R^2 \times [-1, 0])}^{[\frac{p}{1+\alpha}-2]\alpha} \bigg(\int_{Q_{1/2}^*} y^b \lvert \theta^* - \theta \rvert^2 \bigg)^\alpha  \Bigg)\,.
\end{equation}
By H\"older, the first term on the right-hand side is estimated by $C(\theta; 1/2)^\frac{\alpha}{1+\alpha}$ which in turn is controlled by Lemma \ref{lem:interpol}.
As for the second term,
we write $\theta^*(x,y,t)= \theta(x,t) + \int_0^y \partial_z \theta^*(x,z,t) \, dz$ and hence we estimate by H\"older
\begin{align}
 \int_{Q_{1/2}^*} y^b \lvert \theta^*- \theta \rvert^2 \, dx \, dy \, dt  &= 
 \int_{Q_{1/2}^*} y^b \left\lvert \int_0^y \partial_z \theta^*(x,z,t) \, dz \right\rvert^2  \, dx \, dy \, dt \\ 
 &\leq \int_{Q_{1/2}^*} y^b \left( \int_0^y z^b \left(\partial_z \theta^*(x,z,t)\right)^2 \, dz \right) \left( \int_{0}^y z^{-b} \, dz \right)  \, dx \, dy \, dt \leq C \mathcal{E}( 1)
\end{align}
for some $C=C(\alpha) \geq 1 \,.$ 
We collect terms and use Young, 
 using that $p/(p-2\alpha) \in [1, 2]$ and $1/(1-\alpha) \in [4/3, 2]$, to obtain
\begin{align}
\text{I} + \text{III} &\leq C  \mu^{-[\frac{2p}{5}+4] \alpha } \left(  \left(\mathcal{A}(1)^\alpha \mathcal{B}(1)\right)^\frac{\alpha}{1+\alpha} +   \left(\mathcal{A}(1)^{\alpha} \mathcal{B}(1)\right)^\frac{\alpha(1+ 2/p)}{1+\alpha}+ \left(\mathcal{A}(1)^\alpha \mathcal{B}(1)\mathcal{T}_p(1)^\frac{2}{p} \right)^\frac{\alpha}{1+\alpha} +\mathcal{B}(1)^\alpha \right) \\
&\leq \frac{1}{4K} \left( \mathcal{A}(1)^\alpha + \mathcal{T}_{p}(1)^\frac{2}{p} \right) + CK^{-1} \bigg(  \left(K\mu^{-[\frac{2p}{5}+4] \alpha }\mathcal{B}(1)^\frac{\alpha}{1+\alpha}\right)^{1+\alpha}  \\
&\quad+ \left(K\mu^{-[\frac{2p}{5}+4]\alpha }\mathcal{B}(1)^\frac{\alpha(1+ 2/p)}{1+\alpha}\right)^\frac{1+\alpha}{1-2\alpha/p}  +  \left(K\mu^{-[\frac{2p}{5}+4]\alpha}\mathcal{B}(1)^\frac{\alpha}{1+\alpha} \right)^\frac{1+\alpha}{1-\alpha}  \bigg)+ C \mu^{-[\frac{2p}{5}+4]\alpha}  \mathcal{B}(1)^\alpha  \\
&\leq  \frac{1}{4K} \left( \mathcal{A}(1)^\alpha + \mathcal{T}_{p}(1)^\frac{2}{p} \right) + C  \mu^{-[\frac{3p}{5}+6]} K^2 \delta^\frac{1}{4} \,.
\end{align}
}
We now estimate the nonlinear term II using Lemma \ref{lem:interpol}. 
\begin{align}
\text{II} &\leq C \mu^{-[\frac{2p}{5}+4] \alpha} \Big(\mathcal{A}(1)^\alpha \mathcal{B}(1) \big( 1+ \mathcal{A}(1)^\frac{2\alpha}{p} \mathcal{B}(1)^\frac{2}{p} + \mathcal{T}_p(1)^\frac{2}{p} \big)\Big)^\frac{\alpha}{1+\alpha} \Big(\mathcal{A}(3/4)^\alpha \mathcal{B}(3/4)+ \mathcal{T}_p(3/4) \Big)^\frac{\alpha}{p} \\
 &\leq C \mu^{-[\frac{2p}{5}+4]\alpha} \Big((\mathcal{A}(1)^\alpha \mathcal{B}(1))^{\frac{\alpha(p+1+\alpha)}{p(1+\alpha)}}+(\mathcal{A}(1)^\alpha \mathcal{B}(1))^\frac{\alpha}{1+\alpha}\mathcal{T}_p(1)^\frac{\alpha}{p}+ (\mathcal{A}(1)^\alpha \mathcal{B}(1))^\frac{\alpha(p+3+\alpha)}{p(1+\alpha)}\\
 &+(\mathcal{A}(1)^\alpha \mathcal{B}(1))^{\frac{\alpha(p+2)}{p(1+\alpha)}}\mathcal{T}_p(1)^\frac{\alpha}{p}
 + (\mathcal{A}(1)^\alpha \mathcal{B}(1))^\frac{\alpha(p+1+\alpha)}{p(1+\alpha)} \mathcal{T}_p(1)^\frac{2\alpha}{p(1+\alpha)}+ (\mathcal{A}(1)^\alpha \mathcal{B}(1))^\frac{\alpha}{1+\alpha}\mathcal{T}_p(1)^{\frac{\alpha(3+\alpha)}{p(1+\alpha)}} \Big) \,.
\end{align}
All terms are of the form $C \mu^{-[\frac{2p}{5}+4] \alpha} (\mathcal{A}(1)^\alpha \mathcal{B})^{\frac{1}{r_{1}}}( \mathcal{T}_{p}(1)^\frac{2}{p})^\frac{1}{ r_{2}}$ with  $\frac{1}{r_{1}} + \frac{1}{r_{2}}<1$ (allowing  formally $r_{2}=\infty$) and hence can be estimated by Young with exponents $(r_{1}, r_{2}, \frac{r_{1} r_{2}}{r_{1}r_{2}- r_{1}- r_{2}})$ by $$\frac{1}{12} (\mathcal{A}(1)^\alpha + \mathcal{T}_{p}(1)^\frac{2}{p}) + C K^{-1} \big( \mu^{-[\frac{2p}{5}+4]} K \mathcal{B}^\frac{1}{r_{1}} \big)^\frac{r_{1} r_{2}}{r_{1}r_{2}- r_{1}- r_{2}}\,. $$ 
For instance,  we have, using also 
$\frac{p(1+\alpha)}{p-\alpha p - \alpha(1+\alpha)} \leq 6$ and $ \frac{\alpha (p+1+\alpha)}{p-\alpha p - \alpha(1+\alpha)} \geq \frac{1}{3}$, that
\begin{align}
 (\mathcal{A}(1)^\alpha \mathcal{B}(1))^\frac{\alpha(p+1+\alpha)}{p(1+\alpha)} \mathcal{T}_p(1)^\frac{2\alpha}{p(1+\alpha)} &\leq \frac{\mathcal{A}(1)^\alpha + \mathcal{T}_p(1)^\frac{2}{p}}{12K}+ \frac CK \Big(\mu^{-[\frac{2p}{5}+4]\alpha} K \mathcal{B}(1)^\frac{\alpha(p+1+\alpha)}{p(1+\alpha)}\Big)^\frac{p(1+\alpha)}{p-\alpha p - \alpha(1+\alpha)} \\
&\leq \frac{1}{12K}\Big(\mathcal{A}(1)^\alpha + \mathcal{T}_p(1)^\frac{2}{p}\Big) + C \mu^{-[\frac{6p}{5}+12]} K^5 \delta^\frac{1}{4} \,.
\end{align}
Proceeding in this way for each term and carefully minimizing the final powers on $\delta$ and maximising the ones on $K$ and $\mu^{-1}$, we obtain
\begin{equation}\label{eq:iter2}
\mathcal{A}(\mu)^\alpha \leq \frac{3}{4K} \Big(\mathcal{A}(1)^\alpha+ \mathcal{T}_p(1)^\frac{2}{p}\Big) + C \mu^{-[\frac{6p}{5}+12]}K^5 \delta^\frac{1}{4} \, .
\end{equation}

\textit{Step 3: Conclusion.} Combining \eqref{eq:iter1} and \eqref{eq:iter2}, we obtain that for all $\mu \in (0, \mu_0]$
\begin{equation}
\mathcal{A}(\mu)^\alpha + \mathcal{T}_p(\mu)^\frac{2}{p} \leq \frac{1}{K}\Big(\mathcal{A}(1)^\alpha + \mathcal{T}_p(1)^\frac{2}{p}\Big) + C \mu^{-[\frac{6p}{5}+12]} K^5(\delta^\frac{1}{4} + \delta^\frac{2}{p-2}) \, .
\end{equation}
Therefore, \eqref{eq:energyiter} holds if we choose
\begin{equation}
\delta:= \min \left \{ \Big[\mu^{[\frac{6p}{5}+12]}C^{-1} K^{-5}\Big]^4,  \Big[\mu^{[\frac{6p}{5}+12]}C^{-1} K^{-5}\Big]^{\frac{p-2}{p}}, 1 \right \} \, .
\end{equation}
\end{proof}

\section{Iteration of the local energy inequality: Proof of Proposition~\ref{prop:energyiter2}} 

\begin{proof}[Proof of Proposition~\ref{prop:energyiter2}] Let $\varepsilon \in (0,1)$ be given. We let $K\geq2$ be a parameter yet to be chosen big enough. We will use Proposition \ref{prop:energyiter} iteratively with parameters $\frac \varepsilon 2$ and $K$ and we fix an admissible rescaling parameter $\mu \in (0, \frac{1}{4})\,.$
We denote by $\delta'=\delta'(K, \mu, p,\varepsilon)\in (0,1)$ the smallness parameter given by Proposition \ref{prop:energyiter}. 
 
We fix a suitable Leray-Hopf solution $(\theta, u)$ and we consider for a fixed center $(x,t) \in Q_{1}^+$, the sequence of solutions $\{\theta_{j}=\theta_{j}^{(x,t), \mu}, u_{j}=u_{j}^{(x,t), \mu}\}_{j=0}^{j_{0}}\,.$ We recall from the construction (see Section \ref{s:particleflow}) that $u_{j} = \mathcal{R}^\perp \theta_{j} + f_{j}$ and $[u_{j}(s)]_{B_{1/4}}=0$ uniformly in $s$. We assume furthermore that the smallness assumption \eqref{eq:hypiterj} holds for $\delta \in (0, \delta']$ (which we might decide to choose even smaller). Since the latter guarantees in particular that  $\mathcal{B}(\theta_j; 1/2) \leq \delta' $ for $j=0, \dots, j_{0}$, we deduce from Proposition \ref{prop:energyiter} (applied to $(\theta_{j}, u_{j})$ with radius $r=1/2$ and center $(0,0)$) that \begin{equation}
\mathcal{A}(\theta_j; \mu)^\alpha + \mathcal{T}_p (\theta_j; \mu)^\frac{2}{p} \leq \frac{1}{K}\left(\mathcal{A}(\theta_j; 1/2)^\alpha + \mathcal{T}_p(\theta_j; 1/2)^\frac{2}{p}\right)+ \frac{\varepsilon}{2} \qquad \text{ for all } j=0, \dots j_{0} \,.
\end{equation}
We are left to estimate the effect of particle flow in order to connect the previous inequality with the one at step $j-1\,.$  Recall from the construction in Section \ref{s:particleflow} that
$$\theta_{j}(z,s)=  \theta_{j-1, \mu}(z + x_{j}(s), s) \qquad u_{j}(z,s) = u_{j-1, \mu}(z+ x_{j}(s), s)- \dot x_{j}(s)\,,$$
where the particle flow $x_{j}(s)= \Phi^{(0,0)}(u_{j-1, \mu};s)$ is obtained from the rescaling $u_{j-1, \mu}$ (as defined in \eqref{eq:uj-1mu}) by means of Lemma \ref{lem:changeofvar}. Since $[u_{j-1}]_{B_{1/4}}=0$ uniformly in time, we estimate by Lemma \ref{lem:excessofu} and Poincar\'e
\begin{align}
\bigg( \mean{Q_1} &\lvert u_{j-1, \mu} \rvert^\frac{p}{1+\alpha} \, dz \, ds \bigg)^\frac{1+\alpha}{p} = \mu^{2\alpha-1}\bigg( \mean{Q_\mu} \lvert u_{j-1} \rvert^\frac{p}{1+\alpha} \, dz \, ds \bigg)^\frac{1+\alpha}{p} \\
&\leq \mu^{2\alpha-1 - (2+2\alpha)(1+\alpha)/p} \bigg( \mean{Q_{1/4}} \lvert u_{j-1} -[u_{j-1}(s)]_{B_{1/4}} \rvert^\frac{p}{1+\alpha} \, dz \, ds\bigg)^\frac{1+\alpha}{p} \\
&\leq C \mu^{2\alpha-1- (2+2\alpha)(1+\alpha)/p} \bigg( \bigg(\mean{Q_{1/3}} \lvert \theta_{j-1} - [\theta_{j-1}(s)]_{B_{1/3}} \rvert^\frac{p}{1+\alpha} \, dz \, ds \bigg)^\frac{1+\alpha}{p}+ \mathcal{T}_\frac{p}{1+\alpha}(\theta; 1/3)^\frac{1+\alpha}{p} \bigg) \\
&\leq C  \mu^{2\alpha-1 - (2+2\alpha)(1+\alpha)/p}\Big(\mathcal{B}(\theta_{j-1}; 1/2)^\frac{1+\alpha}{p} + \mathcal{T}_\frac{p}{1+\alpha}(\theta_{j-1}; 1/2)^\frac{1+\alpha}{p}  \Big) \\
&\leq C  \mu^{2\alpha-1 - (2+2\alpha)(1+\alpha)/p} \delta^\frac{1+\alpha}{p} \, ,
\end{align}
where we use the assumption \eqref{eq:hypiterj} in the last inequality.
Choosing
\begin{equation}
\delta := \min \left \{ (C \mu^{(2\alpha-1 - (2+2\alpha)(1+\alpha)/p)})^{-p/(1+\alpha)} \varepsilon_1^{p/(1+\alpha)} ,\delta' \right\}
\end{equation}
we can therefore guarantee that 
\begin{align}\label{eq:reuse}
\bigg( \mean{Q_1} \lvert u_{j-1, \mu} \rvert^\frac{p}{1+\alpha}  \, dz \, ds \bigg)^\frac{1+\alpha}{p}  \leq \varepsilon_1 \qquad \text{ for all } j=1, \dots, j_{0}
\, .
\end{align}
Thus we have by Lemma \ref{lem:changeofvar} and by scaling invariance that
\begin{equation}
\mathcal{A}(\theta_j; 1/2)^\alpha + \mathcal{T}_p(\theta_j; 1/2)^\frac{2}{p} \leq  C_1 \Big(\mathcal{A}(\theta_{j-1, \mu}; 1)^\alpha + \mathcal{T}_p(\theta_{j-1, \mu}; 1)^\frac{2}{p}\Big) =  C_1 \Big(\mathcal{A}(\theta_{j-1}; \mu)^\alpha + \mathcal{T}_p(\theta_{j-1}; \mu)^\frac{2}{p}\Big) 
\end{equation} 
for all $j=1, \dots, j_{0}\,.$ Thus choosing $K:= 2 C_1 \geq 2\,,$ we have
\begin{align}
\mathcal{A}(\theta_{j_{0}}; \mu)^\alpha + \mathcal{T}_p(\theta_{j_{0}}; \mu)^\frac{2}{p} &\leq \frac{1}{2C_1}  \Big(\mathcal{A}(\theta_{j_{0}}; 1/2)^\alpha + \mathcal{T}_p(\theta_{j_{0}}; 1/2)^\frac{2}{p} \Big) + \frac{\varepsilon}{2}  \\
&\leq \frac{1}{2}  \Big(\mathcal{A}(\theta_{j_{0}-1}; \mu)^\alpha + \mathcal{T}_p(\theta_{j_{0}-1}; \mu)^\frac{2}{p} \Big) + \frac{\varepsilon}{2} \\
&\leq \frac{1}{2^2 C_1}  \Big(\mathcal{A}(\theta_{j_{0}-1}; 1/2)^\alpha + \mathcal{T}_p(\theta_{j_{0}-1}; 1/2)^\frac{2}{p} \Big) + \frac{ \varepsilon}{4}+ \frac{\varepsilon}{2} \\
&\leq \frac{1}{2^2}  \Big(\mathcal{A}(\theta_{j_{0}-2}; \mu)^\alpha + \mathcal{T}_p(\theta_{j_{0}-2}; \mu)^\frac{2}{p} \Big) + \frac{ \varepsilon}{4}+ \frac{\varepsilon}{2}  \\
&\leq \cdots \leq \frac{1}{2^{j_{0}} C_1}   \Big(\mathcal{A}(\theta_0; 1/2)^\alpha + \mathcal{T}_p(\theta_0; 1/2)^\frac{2}{p} \Big) + \frac{\varepsilon}{2} \sum_{j=0}^{j_0-1} 2^{-j} \\
& \leq \frac{1}{2^{j_{0}}}   \Big(\mathcal{A}(\theta_0; 1/2)^\alpha + \mathcal{T}_p(\theta_0; 1/2)^\frac{2}{p} \Big) + \varepsilon \,.
\end{align}
\end{proof}

\section{Deduction of H\"older continuity: proof of Proposition~\ref{prop:ex-dec}}

\begin{proof}[Proof of Proposition~\ref{prop:ex-dec}] Let $\varepsilon \in (0,1)$ yet to be chosen small enough and $j_0 \in \N$ yet to be chosen large enough. We will apply Proposition~\ref{prop:energyiter2} with $\frac{\varepsilon}{2}$: we denote by $\mu \in (0, \frac{1}{4}]$ an admissible rescaling parameter and by $\delta=\delta(\varepsilon, \mu, p) \in (0,1)$ the associated smallness requirement given by Proposition ~\ref{prop:energyiter2}. We now choose $\mu$ small enough such that it is also an admissible scale for the excess decay of Proposition \ref{prop:excessdecay} and we denote by $\varepsilon_0$ the universal smallness requirement of the excess decay of Proposition \ref{prop:excessdecay}, by $\varepsilon_1$ and $C_1$ the constants of Lemma \ref{lem:changeofvar} and by $C_2$ the constant from Lemma \ref{lem:interpol}. We fix a suitable Leray-Hopf solution $(\theta, u)$ and assume that \eqref{hyp:ex-dec} holds with the above choice of $\delta\,.$

\textit{Step 1:  With a suitable choice of $j_0 \in \N$, depending only on global norms of $\theta$, we reach the hypothesis of the excess decay Proposition \ref{prop:excessdecay} uniformly in $Q_{\mu^{j_0}}^+\,.$}\\
Observe that for every fixed center $(x,t) \in Q_1^+$ it holds $Q_{1/2}(x,t) \subset B_{3/2} \times [-2,2 ]$. Thus, there is a universal constant $C=C(p) \geq 1$ such that for all $(x,t) \in Q_1^+$
\begin{align}
\mathcal{A}\big(\theta_0^{(x,t), \mu }; 1/2\big) + \mathcal{T}_p\big(\theta_0^{(x,t), \mu }; 1/2\big) &\leq C \norm{\theta}_{L^\infty(\R^2 \times [-2, 2])}^{p-3} \norm{\theta}_{L^\infty([-2, 2], L^2(\R^2))}^2 \,.
\end{align}
This bound is independent of $\mu$, since $(\theta_0^{(x,t), \mu }, u_{0}^{(x,t), \mu})$ is only a translation of $(\theta, u)$ according to the particle flow $x_{0}=\Phi^{(x,t)}(u;\cdot)$ which doesn't involve any rescaling yet  (see Section \ref{s:particleflow}). This allows to choose ${j_0}={j_0}(\norm{\theta}_{L^\infty(\R^2 \times [-2, 2])}, \norm{\theta}_{L^\infty([-2, 2], L^2(\R^2))}, \varepsilon)\in \N$ large enough so that 
\begin{equation}
\frac{1}{2^{{j_0}-1}} \Big( \mathcal{A}\big(\theta_0^{(x,t), \mu}; 1/2\big)^\alpha + \mathcal{T}_{p}\big(\theta_0^{(x,t), \mu}; 1/2\big) \Big) \leq \frac{\varepsilon}{2}  \quad \text{for all } (x,t) \in Q_1^+ \,.
\end{equation}
We apply Proposition \ref{prop:energyiter2} with this choice of $j_{0}$ and we infer that if \eqref{hyp:ex-dec} holds, then in particular
\begin{equation}\label{eq:smallness}
\mathcal{A}\big(\theta_{j_0-1}^{(x,t), \mu}; \mu\big)^\alpha + \mathcal{T}_p\big (\theta_{j_0-1}^{(x,t), \mu}; \mu\big)^\frac{2}{p} \leq \varepsilon \quad  \text{for all } (x,t) \in Q_{\mu^{j_0}}^+ \, .
\end{equation}
Moreover, from the proof of Proposition \ref{prop:energyiter2} (see specifically \eqref{eq:reuse}), we also have that
\begin{equation}\label{eq:controlchangevar}
  \bigg( \mean{Q_1} \big\lvert u_{j}^{(x,t), \mu}(z, s) \big\rvert^\frac{p}{1+\alpha} \, dz \, ds \bigg)^\frac{1+\alpha}{p} \leq \varepsilon_1 \quad  \text{for all } (x,t) \in Q_{\mu^j_0}^+ \text{ and for all } j=0, \dots, j_0 -1 \,.
\end{equation}
This allows now to bound the excess of $\theta_{j_{0}}^{(x,t), \mu}$ at scale $\frac{1}{4}$ uniformly in the centers $(x,t) \in Q_{\mu^{j_0}}^+$. To lighten notation, we sometimes write $\theta_{j_{0}}$ instead of $\theta_{j_{0}}^{(x,t), \mu}$ in the following three estimates.  By means of Lemma \ref{lem:interpol}, by combining \eqref{eq:controlchangevar} with Lemma \ref{lem:changeofvar} and by scale invariance, we deduce that if \eqref{hyp:ex-dec} holds, then for every $(x,t) \in Q_{\mu^{j_0}}^+$
\begin{align}
\big(4^{1-2\alpha} &E^S(\theta_{{j_0}}^{(x,t), \mu}; 1/4)\big)^p = \mathcal{C}(\theta_{{j_0}}; 1/4) \\
&\leq C_2 \mathcal{A}(\theta_{{j_0}}; 1/2)^\alpha \mathcal{B}(\theta_{{j_0}}; 1/2)\big( 1+ \mathcal{A}(\theta_{{j_0}}; 1/2)^\frac{2\alpha}{p} \mathcal{B}(\theta_{{j_0}}; 1/2)^\frac{2}{p} + \mathcal{T}_p(\theta_{{j_0}}; 1/2)^\frac{2}{p}\big) \\
&\leq  C_2 C_1^2 \mathcal{A}(\theta_{{j_0}-1, \mu};1)^\alpha \mathcal{B}(\theta_{{j_0}}; 1/2)( 1+ \mathcal{A}(\theta_{{j_0}-1,  \mu}; 1)^\frac{2\alpha}{p} \mathcal{B}(\theta_{{j_0}}; 1/2 )^\frac{2}{p} + \mathcal{T}_p(\theta_{{j_0}-1, \mu};1)^\frac{2}{p}) \\
&=  C_2 C_1^2 \mathcal{A}(\theta_{{j_0}-1}; \mu)^\alpha \mathcal{B}(\theta_{{j_0}}; 1/2)( 1+ \mathcal{A}(\theta_{{j_0}-1}; \mu)^\frac{2\alpha}{p} \mathcal{B}(\theta_{{j_0}}; 1/2 )^\frac{2}{p} + \mathcal{T}_p(\theta_{{j_0}-1}; \mu)^\frac{2}{p}) \\
&\leq 3 C_2 C_1^2 \varepsilon \, .
\end{align}

Similarly, we have for all $(x,t) \in Q_{\mu^{j_0}}^+$ (with a universal constant $C=C(p) \geq 1$) that 
\begin{align}
\big(4^{1-2\alpha} E^V(u_{{j_0}}^{(x,t), \mu}; 1/4)\big)^p &\leq C_2 (\mathcal{A}(\theta_{{j_0}}; 3/8)^\alpha \mathcal{B}(\theta_{{j_0}}; 3/8)+ \mathcal{T}_p(\theta_{{j_0}}; 3/8)^\frac{2}{p}) \\
&\leq C  C_2 (\mathcal{A}(\theta_{{j_0}}; 1/2)^\alpha \mathcal{B}(\theta_{{j_0}}; 1/2)+ \mathcal{T}_p(\theta_{{j_0}}; 1/2)^\frac{2}{p})\\
&\leq C C_1 C_2(\mathcal{A}(\theta_{{j_0}-1}; \mu)^\alpha \mathcal{B}(\theta_{{j_0}}; 1/2)+ \mathcal{T}_p(\theta_{{j_0}-1}; \mu)^\frac{2}{p}) \\
&\leq 2 C C_1 C_2 \varepsilon \,
\end{align}
and  that
\begin{align}
(4^{1-2\alpha} E^{NL}(\theta_{{j_0}}^{(x,t), \mu}; 1/4))^p \leq C \mathcal{T}_p(\theta_{{j_0}}; 1/2)\leq C C_1  \mathcal{T}_p(\theta_{{j_0}-1}; \mu) \leq C C_1 \varepsilon \, .
\end{align}
Thus, if we choose
\begin{equation}\label{eq:choiceofepsilon}
\varepsilon:= \min \left\{ \frac{\varepsilon_0^p}{3^p 3 C C_1^2 C_2},  1\right\} ,
\end{equation}
then we deduce from \eqref{hyp:ex-dec} that 
\begin{equation}\label{eq:endstep1}
E\big(\theta_{j_0}^{(x,t), \mu}; 1/4\big) \leq \varepsilon_0  \quad \text{ for all } (x,t) \in Q_{\mu^{j_0}}^+ \, .
\end{equation}

\textit{Step  2:  In what follows, we will fix a point $(x,t) \in Q_{\mu^{j_0}}^+$ and we denote $\theta_j^{(x,t), \mu}$ simply by $\theta_j\,.$ We prove the excess decay along  the sequence of solutions $(\theta_{j}, u_{j})$, i.e. we prove that for $j \geq {j_0}$ it holds
\begin{align}
E(\theta_j, u_j;\mu) &\leq C_1^{-1} \mu^{\gamma(j+1-j_0)} \mu^{(2\alpha-1)(j-j_0)} \varepsilon_0 \label{eq:ind1} \\
E(\theta_j, u_j ; 1/4) &\leq \mu^{(\gamma-(1-2\alpha))(j-j_0)} \varepsilon_0\,. \label{eq:ind2}
\end{align}
} \\
We proceed by induction on $j \geq j_0 \,.$

\textit{The case $j=j_0\,.$} We already established \eqref{eq:ind2} for $j=j_0$ in \eqref{eq:endstep1}. Since by construction $[u_{j_0}(s)]_{B_{1/4}}=0$ for $s \in [-1, 0]$ and $u_{j_0}= \mathcal{R}^\perp \theta_{j_0} + f_{j_0}$ for some $f_{j_0} \in L^1_{loc}$ (see Section \ref{s:particleflow}),  we apply Proposition \ref{prop:excessdecay} with parameters $c=C^{-1}$, $\gamma$ and $r=1/4$ and we deduce that
\begin{equation}
E(\theta_{j_0}, u_{j_0}; \mu)\leq C_1^{-1} \mu^\gamma E(\theta_{j_0}, u_{j_0}; 1/4)  \leq C_1^{-1} \mu^\gamma \varepsilon_0 \,.
\end{equation}

\textit{The inductive step.} Assume that the claim holds for some $j \geq j_0 \,.$ We need to show that it holds at $j+1 \,.$ Recall from Section \ref{s:particleflow} that 
$$\theta_{j+1}(z,s)= \theta_{j, \mu}(z + x_{j+1}(s), s) \qquad u_{j+1}(z,s)= u_{j, \mu}(z + x_{j+1}(s), s) - \dot x_{j+1}(s)\,, $$
where $x_{j+1}(s)= \Phi^{(0,0)}(u_{j, \mu}; s)$ is obtained from the rescaling $u_{j, \mu}$ by applying Lemma \ref{lem:changeofvar} at the point $(0, 0)\,.$ Since $[u_j]_{B_{1/4}}=0$ by construction, we observe that by induction hypothesis (\eqref{eq:ind2} holds at $j$) 
\begin{align}
\bigg(\mean{Q_1} \lvert u_{j, \mu} (y,s) \rvert^{p} \, dy \, ds\bigg)^\frac{1}{p} &\leq  \mu^{2\alpha-1} (4\mu)^{- \frac{2+2\alpha}{p}} \bigg(\mean{Q_\frac{1}{4}} \lvert u_{j} - [u_j(s)]_{B_\frac{1}{4}} \rvert^{p} \, dy \, ds \bigg)^\frac{1}{p} 
\leq  \mu^{2\alpha-1} (4\mu)^{- \frac{2+2\alpha}{p}}\varepsilon_0\,.
\end{align}
Up to choosing $\varepsilon_0$ even smaller such that the above doesn't exceed $\varepsilon_1$, we deduce from Lemma \ref{lem:changeofvar} (with $q=p$ this time) and the induction hypothesis (\eqref{eq:ind1} holds at $j$) that 
\begin{equation}
E(\theta_{j+1}, u_{j+1}; 1/4) \leq  C_1 E(\theta_{j, \mu}, u_{j, \mu}; 1) =  C_1 \mu^{2\alpha-1} E(\theta_{j}, u_{j}; \mu) \leq \mu^{(\gamma-(1-2\alpha))(j+1-j_0)} \varepsilon_0\,.
\end{equation}
From the above, we deduce with excess decay of Proposition \ref{prop:excessdecay} (applied with the same parameters as before) that 
 \begin{equation}
 E(\theta_{j+1}, u_{j+1}; \mu) \leq  C_1^{-1} \mu^\gamma  E(\theta_{j+1}, u_{j+1}; 1/4) \leq C_1^{-1}  \mu^{\gamma (j+2-j_0)} \mu^{(2\alpha-1)(j+1-j_0)} \varepsilon_0 \, .
 \end{equation}
 
\textit{Step 3: We deduce H\"older continuity of $\theta$ via Campanato's Theorem.}\\
We first establish a control on the tilting effect of particle flow. Let $(x,t) \in Q_{\mu^{j_0}}^+$ fixed and recall from the construction that 
\begin{equation}
\theta_j(z,s)= \mu^{(2\alpha-1)j} \theta( \mu^j z + R_j(s) +x, \mu^{2\alpha j} s + t)\,,
\end{equation}
where $R_j=R_{j}^{(x,t), \mu}$ is defined in \eqref{def:Rj}. For $j \geq j_0$ we write $R_j(s)= R_{j_0}(\mu^{2\alpha(j-j_0)} s) + r_{j > j_0}(s) \,,$ where
\begin{equation}
r_{j > j_0}(s):= \sum_{k=j_0+1}^j \mu^k x_k(\mu^{2\alpha(j-k)} s) \,.
\end{equation}
It is easy to prove that, up to requiring that $\varepsilon_0$ is smaller than $\frac{3}{4} \mu^{1-2\alpha+\frac{2}{p}}$, we have 
\begin{equation}
\norm{ \dot{x_j}}_{L^p((-1, 0))}\leq  \mu^{-(1-2\alpha+2/p)} E^V(u_{j-1}; 1/4) \leq \mu^{-(1-2\alpha+2/p)} \varepsilon_0 \qquad \text{ for all } j \geq j_{0}+1 \,.
\end{equation}
Thus we can bound 
\begin{align}
\lvert r_{j >j_0}(s ) \rvert &\leq \varepsilon_0 \mu^{-(1-2\alpha+2/p)} \lvert s \rvert^{1-1/p} \sum_{k=j_0+1}^j  \mu^k \mu^{2\alpha(j-k)(1-1/p)}\nonumber \\
&\leq C \varepsilon_0 \mu^{-2/p + 2\alpha/p}  \mu^{2\alpha j(1-1/p)} \lvert s \rvert^{1-1/p} \quad \forall s \in (-1,0)\,, \label{e:estrj}
\end{align}
where $C=C(\alpha, p, \mu):= \sum_{k=0}^\infty \mu^{k(1-2\alpha(1-1/p))}\,.$

The first $j_0$ spatial translations cannot be made small by means of the excess (which, in fact, might not be small); however, we observe that if we look at small enough times, the translations don't have enough time to act and thus can be made as small as we want thanks to the global control of $u \in L^\infty_{t} BMO_x$ (see Theorem \ref{thm:LerayConstWu}). More precisely, we make the following

\textit{Claim: there exists $ j_1=j_1(\alpha, \mu, p, \norm{u}_{L^{2p}(Q_{1}^+)}, j_{0})>j_0$ large enough such that for all $j \geq j_1$
\begin{equation}\label{e:claim}
\lvert R_{j_{0}}(\mu^{2\alpha(j-j_{0})}s) \rvert \leq \frac{1}{4}  \mu^{2\alpha j (1-1/p)} \lvert s \rvert^{1-1/p} \qquad  \forall s \in (-1, 0)  \, .
\end{equation}
}
\textit{Proof of the claim:} As long as $\lvert R_{j_{0}}(s) \rvert \leq \frac{1}{4}$, we deduce from \eqref{eq:ODERj} the estimate 
$$\lvert \dot R_{j_{0}}(s) \rvert \leq 2 \mu^{(2\alpha- \frac{1}{p})j_{0}} \left( \int_{B_{1/2}} \lvert u \rvert^{2p} (z+x, \mu^{2\alpha j_{0}} s +t) \, dz \right)^\frac{1}{2p} $$ 
and hence, recalling that $(x,t) \in Q_{\mu^{j_{0}}}^+ \subset Q_{1/4}^+$, the improved estimate
\begin{equation}\label{eq:proofofclaimR}
\lvert R_{j_{0}}(s) \rvert \leq 2 \mu^{(2\alpha- \frac{1}{p})j_{0}} \lvert s \rvert^{1-1/(2p)}  \norm{u}_{L^{2p}(Q_{1}^+)}\,.
\end{equation}
We now choose $j_{1}=j_{1}(\alpha, \mu, p, \norm{u}_{L^{2p}(Q_{1}^+)}, j_{0})>j_{0}$ large enough so that 
$$\mu^{\frac{\alpha}{p} j_{1}}\left( 2 \mu^{-\frac{(1-2\alpha)}{p}j_{0}} \norm{u}_{L^{2p}(Q_{1}^+)} \right) < \frac{1}{4} \,.$$
For every $j \geq j_{1}$, this choice guarantees that $\lvert R_{j_{0}}(s) \rvert < \frac{1}{4}$ for all $s \in [-\mu^{2\alpha(j-j_{0})}, 0]$ and hence, \eqref{e:claim}
is a consequence of \eqref{eq:proofofclaimR}.
\qed 

Combining \eqref{e:estrj} with the claim \eqref{e:claim} and up to choosing $\varepsilon_0$ smaller than $\frac{1}{2}C^{-1} \mu^{2/p -2\alpha/p}$, we have that 
\begin{equation}
\lvert R_j(s)  \rvert \leq \frac{3}{4} \mu^{2\alpha j(1-1/p)} \lvert s \rvert^{1-1/p}\, \quad \forall s \in (-1,0) \text{ and }\forall j\geq  j_1 \,.\label{e:estrjfinal}
\end{equation}
It is important to observe that $\varepsilon_{0}$ does not depend on neither $j_{0}$ nor $j_{1}$, but only on the parameters $\alpha, p, \sigma, \gamma$ and $\mu\,.$ For $j \geq j_1$, we therefore set
\begin{equation}\label{e:Ij}
I_{j+1}:= \Big( \mu^{(j+1)( \frac{p}{p-1} -2\alpha) }, 0\Big]\,,
\end{equation}
so that on $I_{j+1}$ we have the following control on the tilting effect of the particle flow:
$$ B_{1/4} \subseteq  B_1 + \mu^{-(j+1)} R_j(\mu^{2\alpha} s) \qquad  \forall s \in I_{j+1}  \,.$$
This control allows to deduce that $\theta$ belongs to some suitably chosen Campanato space. Indeed, we have for a constant $A$ yet to be chosen suitably, we have
\begin{align*}
\bigg(\mean{ I_{j+1}} \mean{ B_{1/4}} &\lvert \theta(\mu^{j+1}z +x, \mu^{(j+1)2\alpha}s +t)-A \rvert \, dz \,ds \bigg)^\frac{1}{p} \\
&\leq  4 \bigg(\mean{ I_{j+1}} \mean{ B_1}  \lvert \theta(\mu^{j+1}z + R_j (\mu^{2\alpha s}) +x, \mu^{(j+1)2\alpha}s +t)-A \rvert \, dz \,ds \bigg)^\frac{1}{p} \\
&\leq 4 \, \lvert I_{j+1} \rvert^{-1/p} \bigg(\mean{ Q_1}  \lvert \theta(\mu^{j+1}z + R_j(\mu^{2\alpha s}) +x, \mu^{(j+1)2\alpha}s +t)-A \rvert \, dz \,ds \bigg)^\frac{1}{p} \\
&=4 \mu^{-(j+1)(\frac{1}{p-1}-\frac{2\alpha}{p})} \mu^{(1-2\alpha)j} \bigg( \mean{Q_\mu} \lvert \theta_j(z,s) - A \rvert^p \, dz \, ds \bigg)^\frac{1}{p}\,. \\
\end{align*}
Choosing $A= (\theta_j)_{Q_\mu}$, we deduce from the previous estimate and \eqref{eq:ind1} that for $r= \mu^{j+1}$
\begin{align}\label{eq:Campanato}
&\bigg(\mean{t- r^{ \frac{p}{p-1}}}^t \mean{ B_{r/4}(x)} \lvert \theta - (\theta)_{B_{r/4}(x) \times (t- r^{p/(p-1)},t] }\rvert^p \, dz \, ds \bigg)^\frac{1}{p} \leq C \, r^{( \gamma -\frac{1}{p-1} [1-2\alpha + \frac{2\alpha}{p}])} \varepsilon_0
\end{align}
where we set $C:=8 C_1^{-1} \mu^{-(\gamma-(1-2\alpha))j_0}\,.$ Since \eqref{eq:Campanato} holds for all radii of the form $r= \mu^{j+1}$ for $j \geq j_1$, a standard argument shows that \eqref{eq:Campanato} holds in fact, up to enlarging the constant $C$, also for all intermediate radii $\mu^{j+2} < r < \mu^{j+1}\,.$ In other words, \eqref{eq:Campanato} holds for all $r \in (0,r_0]$ where $r_0:= \mu^{j_1+1}$ does not depend on $(x,t)$. Since $(x,t)$ was an arbitrarily chosen point in $Q_{\mu^{j_0}}^+$, we deduce by a variant of Campanato's Theorem\footnote{To be precise, we apply Campanto's Theorem respect to the metric  $d((x,t), (y,s)):= \max \{ \lvert x-y \rvert, \lvert t-s \rvert^{(p-1)/p} \}$ on spacetime) where in time, as usual for parabolic equations, we only look at backward-in-time intervals. The proof of this version of Campanato's Theorem follows, for instance, line by line \cite[Theorem 1]{Schlag1996} when replacing the parabolic metric by $d\,.$} \cite[Theorem 2.9.]{Giusti2003} that \eqref{eq:holdercont} holds. Finally, smoothness in the case that the H\"older continuity exponent exceeds $1-2\alpha$ follows for instance from \cite[Lemma B.1 ]{CH2021}.
\end{proof}

\section{$\varepsilon$-regularity result: Proof of Theorem~\ref{thm:running}}\label{s:proof eps}

In order to translate the smallness requirement of Proposition \ref{prop:energyiter2} into a smallness requirement on $\theta\,,$ we need a control on the tilting effect (in space) of the particle flow  $R_j\,.$ Recalling \eqref{eq:ODERj} and \eqref{def:ujcompact}, we estimate
\begin{align}\label{eq:boundflow}
\norm{ R_j^{(x,t), \mu}}_{L^\infty([-1, 0])} \leq\norm{ \dot R_j^{(x,t), \mu}}_{L^q([-1, 0])} &\leq 2 \mu^{(2\alpha - 2/q)j} \norm{u}_{L^\infty([t-\mu^{2\alpha j}, t], L^q(\R^2))} \nonumber \\
&\leq \mu^{(2\alpha -2/q)j} K_q(u;t, \mu^j) \,, 
\end{align}
where $K_q(u; t, r)$ is defined in \eqref{def:Kq}. Due to boundedness of the Riesz-transform on $L^q$ and the boundedness of Leray-Hopf solutions (see Theorem \ref{thm:LerayConstWu}), $ K_q(u;t, r)$ is, for small $r$, uniform on bounded sets of initial data in $L^2$ and for times well-separated from $0$ (see Remark \ref{rmk:strangegeometry}).

\begin{lemma}\label{lem:boundcurlybt} Let $\alpha\in (\frac{1}{\sqrt{6}}, \frac{1}{2})$, $p>3$, $\sigma \in (0, 2\alpha)$ and $q\geq 10$. There exists $C=C(\alpha, p, \sigma) \geq 1$ such that
\begin{align}
\sup_{(x,t) \in Q_{\mu^j}^+} \mathcal{B}(\theta_j^{(x,t), \mu}; 1/2) &\leq C \, \mathrm{B}(\theta; 4\mu^j)\,, \label{eq:boundcurlyb} \\
\sup_{(x,t) \in Q_{\mu^j}^+} \mathcal{T}_\frac{p}{1+\alpha}(\theta_j^{(x,t), \mu}; 1/2) &\leq C\, \mathrm{T}(\theta; 4\mu^j) \label{eq:boundcurlyt}
\end{align}
for $j \geq 0\,,$ where $\mathrm{B}$ is defined in \eqref{def:B} and 
\begin{align}\label{def:T} 
&\mathrm{T}(\theta;x,t, r):= \\
&\frac{1}{r^{\frac{p}{1+\alpha}(1-2\alpha)+2\alpha}} \int_{t-r^{2\alpha}}^{t+r^{2\alpha}} \,  \sup_{\lvert z-x \rvert\leq \frac{3}{4} K_q(u;t,r) r^{2\alpha-2/q}} \, \sup_{R \geq \frac{r}{4}} \left( \frac{r}{R}\right)^{ \frac{\sigma p}{1+\alpha}} \left( \mean{B_R(z)} \lvert \theta - [\theta(s)]_{B_\frac{r}{4}(z)} \rvert^\frac{3}{2} \, dz' \right)^\frac{2p}{3(1+\alpha)} \, ds \,.
\end{align}
\end{lemma}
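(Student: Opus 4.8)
The plan is to prove both \eqref{eq:boundcurlyb} and \eqref{eq:boundcurlyt} by the same change of variables, so I would treat them in parallel. The starting point is that the Caffarelli--Silvestre extension commutes with spatial dilations and translations: for each fixed $s$ the map $z\mapsto\theta_j^{(x,t),\mu}(z,s)=\mu^{(2\alpha-1)j}\theta(\mu^jz+R_j^{(x,t),\mu}(s)+x,\mu^{2\alpha j}s+t)$ is $\mu^{(2\alpha-1)j}$ times a dilation-plus-translation of $\theta(\cdot,\mu^{2\alpha j}s+t)$, so uniqueness in Theorem~\ref{thm:CF} forces the extension of $\theta_j^{(x,t),\mu}$ to be
\[
(z,y,s)\longmapsto \mu^{(2\alpha-1)j}\,\theta^*\big(\mu^jz+R_j^{(x,t),\mu}(s)+x,\ \mu^jy,\ \mu^{2\alpha j}s+t\big),
\]
and correspondingly $[\theta_j^{(x,t),\mu}(s)]_{B_\rho}=\mu^{(2\alpha-1)j}[\theta(\mu^{2\alpha j}s+t)]_{B_{\mu^j\rho}(R_j^{(x,t),\mu}(s)+x)}$. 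Inserting these identities into the definitions of $\mathcal{B}(\theta_j^{(x,t),\mu};1/2)$ and $\mathcal{T}_{p/(1+\alpha)}(\theta_j^{(x,t),\mu};1/2)$ and performing the substitution $z'=\mu^jz+R_j^{(x,t),\mu}(s)+x$, $y'=\mu^jy$, $s'=\mu^{2\alpha j}s+t$, all the powers of $\mu^j$ coming from the Jacobian, from the weight $y^b$ with $b=1-2\alpha$, from the factors $\mu^{(2\alpha-1)j}$ (squared, after $\lvert\overline\nabla\cdot\rvert^2$ resp.\ raised to $\tfrac{2p}{3(1+\alpha)}$ after the $\tfrac32$-power), and — for $\mathcal B$ — from the $L^\infty$-norm, combine into the single factor $\mu^{-j[\frac{p}{1+\alpha}(1-2\alpha)+2]}$ for $\mathcal B$ and $\mu^{-j[\frac{p}{1+\alpha}(1-2\alpha)+2\alpha]}$ for $\mathcal{T}_{p/(1+\alpha)}$; together with the harmless numerical powers $4^{\frac{p}{1+\alpha}(1-2\alpha)+2}$ etc.\ this is precisely the normalising prefactor appearing in $\mathrm B(\theta;4\mu^j)$ resp.\ $\mathrm T(\theta;4\mu^j)$. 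I expect this exponent-matching to go through mechanically, leaving only dimensional constants depending on $\alpha,p,\sigma$.

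The one genuinely geometric point is to check that, after the substitution, the domain of integration is contained in the (deliberately oversized) domain defining $\mathrm B(\theta;4\mu^j)$, resp.\ $\mathrm T(\theta;4\mu^j)$. Here I would invoke the particle-flow bound \eqref{eq:boundflow}, $\norm{R_j^{(x,t),\mu}}_{L^\infty([-1,0])}\le\mu^{(2\alpha-2/q)j}K_q(u;t,\mu^j)$, together with two elementary facts: (a) $K_q(u;t,r)\,r^{2\alpha-2/q}\ge 2r$ for every $r$, immediate from $K_q(u;t,r)\ge 2r^{1-2\alpha+2/q}$; and (b) $r\mapsto K_q(u;t,r)$ is nondecreasing and $K_q(u;t,\mu^j)\le K_q(u;0,4\mu^j)$ whenever $(x,t)\in Q_{\mu^j}^+$, because then $[t-\mu^{2\alpha j},t+\mu^{2\alpha j}]\subseteq[-(4\mu^j)^{2\alpha},(4\mu^j)^{2\alpha}]$ (using $4^{2\alpha}>2$, valid since $\alpha>\tfrac14$). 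For $z\in B_{1/2}$ and $\lvert x\rvert\le\mu^j$ one then gets $\lvert z'\rvert\le\tfrac32\mu^j+\mu^{(2\alpha-2/q)j}K_q(u;0,4\mu^j)$; dividing by $\mu^{(2\alpha-2/q)j}$ and using (a)--(b) yields $\lvert z'\rvert\le\big(\tfrac3{16}+4^{-(2\alpha-2/q)}\big)K_q(u;0,4\mu^j)(4\mu^j)^{2\alpha-2/q}$, which is $<K_q(u;0,4\mu^j)(4\mu^j)^{2\alpha-2/q}$ because $2\alpha-2/q>\tfrac{2}{\sqrt6}-\tfrac15>\tfrac35$, hence $4^{-(2\alpha-2/q)}<\tfrac12$; by the same estimate $z_0:=R_j^{(x,t),\mu}(s)+x$ satisfies $\lvert z_0\rvert<\tfrac34K_q(u;0,4\mu^j)(4\mu^j)^{2\alpha-2/q}$ and so is an admissible centre for the supremum in \eqref{def:T}. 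The time variable lands in $[t-(\mu^j/2)^{2\alpha},t]\subseteq[-(4\mu^j)^{2\alpha},(4\mu^j)^{2\alpha}]$ and $y'\in[0,\mu^j/2)\subseteq[0,4\mu^j)$, as required (similarly for the $L^\infty$-norm, $\norm{\theta_j^{(x,t),\mu}}_{L^\infty(\R^2\times[-2^{-2\alpha},0])}\le\mu^{(2\alpha-1)j}\norm{\theta}_{L^\infty(\R^2\times[-(4\mu^j)^{2\alpha},(4\mu^j)^{2\alpha}])}$). This containment step is where I expect the real work to lie, since it is the only place the hypotheses $\alpha>1/\sqrt6$, $q\ge 10$ and the specific radius $4\mu^j$ are used; everything else is dilation-invariance.

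For \eqref{eq:boundcurlyb} this already concludes, because enlarging the domain of a nonnegative integrand only increases $\mathrm B$. For \eqref{eq:boundcurlyt} two harmless mismatches remain, both due to $\mathcal{T}_{p/(1+\alpha)}$ being evaluated at scale $\tfrac12$ while $\mathrm T$ uses the base ball $B_{r/4}(\cdot)$ at scale $r=4\mu^j$: after rescaling the subtracted mean is $[\theta(s')]_{B_{\mu^j/2}(z_0)}$ rather than $[\theta(s')]_{B_{\mu^j}(z_0)}$, and the outer radii range over $R'\ge\mu^j/4$ rather than $R'\ge\mu^j$. Both are absorbed into $C$: one has $\lvert[\theta(s')]_{B_{\mu^j/2}(z_0)}-[\theta(s')]_{B_{\mu^j}(z_0)}\rvert\le 4\,\mean{B_{\mu^j}(z_0)}\lvert\theta(s')-[\theta(s')]_{B_{\mu^j}(z_0)}\rvert$, which by H\"older is controlled by the $R=\mu^j$ term of the supremum defining $\mathrm T$; and for $\mu^j/4\le R'<\mu^j$ one has $\mean{B_{R'}(z_0)}\lvert\cdot\rvert^{3/2}\le 16\,\mean{B_{\mu^j}(z_0)}\lvert\cdot\rvert^{3/2}$ while $(\tfrac{\mu^j/2}{R'})^{\sigma p/(1+\alpha)}\le 2^{\sigma p/(1+\alpha)}$. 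Taking the supremum over $(x,t)\in Q_{\mu^j}^+$ then gives \eqref{eq:boundcurlyb}--\eqref{eq:boundcurlyt} with a constant $C=C(\alpha,p,\sigma)$, the $\sigma$-dependence entering only through the finitely many powers $2^{\sigma p/(1+\alpha)},4^{\sigma p/(1+\alpha)},8^{\sigma p/(1+\alpha)}$.
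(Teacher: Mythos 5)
Your proof takes essentially the same route as the paper's: the extension commutes with the dilation--translation, so a change of variables together with the flow bound \eqref{eq:boundflow} and the parameter inequality $2\alpha-2/q>\tfrac12$ gives the required containment of domains, and your explicit treatment of the two mismatches for $\mathrm T$ correctly fills in the paper's ``follows analogously.'' The only slip is a factor of $2$: since $\mathcal T_{p/(1+\alpha)}(\theta_j;1/2)$ has $R\geq (1/2)/4=1/8$, the rescaled outer radii range over $R'\geq\mu^j/8$ rather than $\mu^j/4$, which only changes your absorption constants (to $64$ and $4^{\sigma p/(1+\alpha)}$) but nothing else.
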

\begin{proof} The choice of the parameters guarantees in particular that 
\begin{equation}\label{eq:stupidparameters}
2\alpha - 2/q > \frac{1}{2} \,.
\end{equation}
Fix $(x,t) \in Q_{\mu^j}^+\,.$ Using the rewriting \eqref{def:thetajcompact}, Theorem \ref{thm:CF}, the fact that the Poisson kernel $P$ is 2-homogeneous and that convolution commutes with translations, we have
\begin{equation}
\theta_j^*(z,y, s)= (P(\cdot, y) \ast \theta_j(\cdot, s))(z)  =  \mu^{(2\alpha-1)j} \theta^*(\mu^jz+ R_j(s)+x, \mu^j y, \mu^{2\alpha j} s+t)\,.
\end{equation}
Thus, for fixed times
\begin{align}
\int_{B_{\frac{1}{2}}^*} y^b \lvert \overline \nabla \theta_j^*(z,y,s) \rvert^2 \, dz \, dy  = \mu^{j(2\alpha-2(1-2\alpha)-2)} \int_0^{\frac{\mu^j}{2}} \int_{B_\frac{\mu^j}{2}(R_j(s) + x)} y^b \lvert \overline \nabla \theta(z,y,\mu^{2\alpha j} s + t) \rvert^2 \, dz \, dy \,. \,  \label{eq:rewritingcurlyb}
\end{align}
Using \eqref{eq:boundflow}, we bound the effect of the particle flow for every center $(x,t) \in Q_{\mu^j}^+$ and for every time $s \in [-(1/2)^{2\alpha}, 0]$ by
\begin{align}\label{eq:geometry}
\lvert \mu^{2\alpha j} s +t \rvert &\leq 2 \mu^{2\alpha j} \leq (4 \mu^j)^{2\alpha}\,,  \\
 \lvert R_j(s) + x \rvert &\leq  \mu^{j} + 2 \mu^{(2\alpha -2/q)j} \norm{u}_{L^\infty([t- \mu^{2\alpha j}, t], L^q(\R^2))} \nonumber\\
 &= (4\mu^j)^{2\alpha - 2/q} \Big( \frac{1}{4} (4 \mu^{j})^{1-2\alpha+2/q} + \frac{1}{4^{2\alpha -2/q}} 2 \norm{u}_{L^\infty([- (4\mu^j)^{2\alpha},(4\mu^j)^{2\alpha}], L^q(\R^2))} \Big) \\
 &\leq \frac{5}{8} (4\mu^j)^{2\alpha - 2/q} K_{q}(u; 4\mu^j)\,,
 \label{eq:geometry2}
 \end{align}
where the last inequality follows from the parameter inequality \eqref{eq:stupidparameters}. From \eqref{eq:geometry2} we deduce the set inclusion 
\begin{align}
B_\frac{\mu^j}{2}(R_j(s)+ x)
 &\subseteq B_{\frac{3}{4}K_q(u; 4\mu^j) (4\mu^j)^{2\alpha-2/q}}\,.
\end{align}
The validity of \eqref{eq:boundcurlyb} now follows from \eqref{eq:rewritingcurlyb} after a change of variables in time. The estimate \eqref{eq:boundcurlyt} follows analogously.
\end{proof}

\begin{lemma}\label{lem:limsup} Let $\alpha \in (\frac{1}{4}, \frac{1}{2})$, $p> \frac{1+\alpha}{\alpha}\,,$ $\sigma \in (0, 2\alpha)$ and $q \geq 10\,.$ Assume that 
\begin{equation}\label{e:parameter}
(\sigma - (1-2\alpha)) \frac{p}{1+\alpha} - 2\alpha >0 \,.
\end{equation}
Then, there exist a universal $C= C(\alpha, p, \sigma) \geq 1$ such that 
\begin{equation}
\limsup_{r \to 0} \mathrm{T}(\theta; x,t,r) \leq  C \limsup_{r \to 0 } \mathrm{B}(\theta; x,t,r) \,.
\end{equation}
\end{lemma}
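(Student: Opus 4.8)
The plan is to compare, for each fixed small scale $r$, the tail functional $\mathrm{T}(\theta;x,t,r)$ with the quantity $\mathrm{B}(\theta;x,t,r)$ evaluated at a comparable family of scales, and then to pass to the $\limsup$. The key observation is that $\mathrm{B}$ controls, through the Caffarelli–Silvestre energy and the Poincar\'e-type inequality \eqref{eq:Poincare}, a localized $W^{\alpha,2}$-norm of $\theta$ on balls of radius $\sim K_q(u;t,r)\,r^{2\alpha-2/q}$, and by Sobolev embedding in two dimensions this in turn controls the $L^{3/2}$-oscillation appearing inside $\mathrm{T}$. The scaling exponents match precisely because of the parameter hypothesis \eqref{e:parameter}: the power $\frac{p}{1+\alpha}(1-2\alpha)+2\alpha$ in front of $\mathrm{T}$ together with the factor $(r/R)^{\sigma p/(1+\alpha)}$ is exactly what is needed to absorb the gain $(r/R)^{\sigma p/(1+\alpha)}$ against the loss coming from enlarging the ball, using $\sigma<2\alpha$ for the small radii $R\le r/4$ and $(\sigma-(1-2\alpha))\frac{p}{1+\alpha}-2\alpha>0$ for the large radii $R\ge$ (the localization radius).

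Concretely, first I would reduce to $(x,t)=(0,0)$ by translation. For a fixed $s$ in the time interval and a fixed center $z$ with $|z|\le\frac34 K_q(u;t,r)r^{2\alpha-2/q}=:\rho$, split the supremum over $R$ into the range $\frac r4\le R\le \rho$ and $R\ge \rho$. In the first range, by H\"older and \eqref{eq:Poincare} (applied on $B_R(z)$, with the enlargement absorbed into $B_{4\rho/3}$) together with the $L^\infty$ bound on $\theta$ to pass from the $L^2$ of $|\theta^*|^{(\cdot)/2}$-type gradient to $|\overline\nabla\theta^*|^2$ exactly as in the proof of Lemma~\ref{lem:interpol}, one bounds the $L^{3/2}$-average oscillation by $C\,R^{\,\alpha-1}\big(\int_{B^*_{2\rho}}y^b|\overline\nabla\theta^*|^2\big)^{1/2}$, raised to the appropriate power; the factor $(r/R)^{\sigma p/(1+\alpha)}$ with $\sigma<2\alpha$ then makes the $R$-integral converge and produces a bound by $C\,\mathrm{B}(\theta;0,0,Cr)$ after rescaling back. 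In the second range $R\ge\rho$, I would first replace $[\theta(s)]_{B_{r/4}(z)}$ by $[\theta(s)]_{B_\rho(z)}$ (paying a term already estimated by the first range) and then use the Poincar\'e inequality on $B_R(z)$ controlled by the $W^{\alpha,2}$-seminorm on $B_R(z)$, which is bounded by that on the localization ball plus a Gagliardo tail; the hypothesis \eqref{e:parameter} guarantees that after multiplying by $(r/R)^{\sigma p/(1+\alpha)}$ and integrating/taking the supremum, everything is controlled by a fixed multiple of $\mathrm{B}(\theta;0,0,Cr)$ uniformly in $R$. Taking first the supremum over $z$ (which only enlarges the balls by a fixed factor, absorbed into the constant) and then over $R$, and finally integrating in $s$, yields $\mathrm{T}(\theta;0,0,r)\le C\,\mathrm{B}(\theta;0,0,Cr)$ for all small $r$; passing to the $\limsup$ as $r\to0$ gives the claim, since $\limsup_{r\to0}\mathrm{B}(\theta;0,0,Cr)=\limsup_{r\to0}\mathrm{B}(\theta;0,0,r)$.

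The main obstacle I expect is the large-$R$ regime: unlike the local part, here one genuinely uses a nonlocal (Gagliardo or Poisson-kernel) estimate to bound the average of $\theta$ on a ball much larger than the localization radius $\rho$ by the localized Dirichlet energy, and one must check that the tail decay $(r/R)^{\sigma p/(1+\alpha)}$ beats the growth of the crude bound $\big(\mean{B_R}|\theta-[\theta]_{B_\rho}|^{3/2}\big)^{2p/(3(1+\alpha))}$ — this is exactly where \eqref{e:parameter} is indispensable, and getting the bookkeeping of exponents right (especially that the power of $r$ out front combines correctly with the rescaling $\theta\mapsto r^{2\alpha-1}\theta(r\cdot)$ and with the definition of $K_q$) is the delicate point. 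A secondary technical nuisance is that $\mathrm{B}$ as defined in \eqref{def:B} carries the extra factor $\|\theta\|_{L^\infty}^{\,p/(1+\alpha)-2}$ and the ball radius itself depends on $K_q(u;t,r)$, so one must be careful that the comparison ball radii in $\mathrm{T}$ (which involve $K_q(u;t,r)$ at the \emph{outer} scale) and those in $\mathrm{B}$ match up to the universal constants $3/4$ versus $1/2$ versus $4/3$ — this is the same bookkeeping already carried out in Lemma~\ref{lem:boundcurlybt}, and I would mimic that argument.
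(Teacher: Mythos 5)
The paper does \emph{not} prove the one-scale bound you aim for; instead it proves an iterative decay inequality
\begin{equation}
\mathrm{T}(\theta;x,t,\rho r)\;\leq\;\tfrac12\,\mathrm{T}(\theta;x,t,r)+C\,\mathrm{B}(\theta;x,t,r)\,,
\end{equation}
for $\rho$ small (only depending on $\alpha,p,\sigma$), and then passes to the $\limsup$, which absorbs the $\tfrac12\,\mathrm{T}$ term and yields $\limsup\mathrm{T}\leq 2C\limsup\mathrm{B}$. The essential point is that for the supremum over large radii $R\geq 1/4$ the paper compares $\mathrm{T}(\rho)$ with $\mathrm{T}(1)$ — i.e.\ one tail functional against another tail functional at a larger scale — and the decay factor $(\rho/R)^{\sigma p/(1+\alpha)}$ against $(1/R)^{\sigma p/(1+\alpha)}$ supplies the gain $\rho^{\sigma p/(1+\alpha)}$, which combined with the prefactors and \eqref{e:parameter} gives a strictly positive power of $\rho$. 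The remaining intermediate range $R\in[\rho/4,1/4]$ is genuinely local and is bounded by $\mathrm{B}(1)$ via Poincar\'e.

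Your proposal takes a different and, I believe, unachievable route: a direct one-scale inequality $\mathrm{T}(\theta;r)\leq C\,\mathrm{B}(\theta;Cr)$. The gap is exactly where you flag it, in the large-$R$ regime, but your proposed fix does not close it. After replacing $[\theta(s)]_{B_{r/4}(z)}$ by $[\theta(s)]_{B_\rho(z)}$ you invoke a Poincar\'e estimate on $B_R(z)$ in terms of the $W^{\alpha,2}$-seminorm on $B_R(z)$, which you then split into ``the seminorm on the localization ball plus a Gagliardo tail.'' That Gagliardo tail — the double integral over pairs of points at least one of which lies outside $B_{K_q r^{2\alpha-2/q}}$ — is precisely the nonlocal information that $\mathrm{B}$ cannot see, and it is not dominated by the localized Caffarelli--Silvestre energy appearing in $\mathrm{B}$. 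Put differently, $\mathrm{B}(\theta;Cr)$ is a purely local quantity (Dirichlet energy on a shrinking cylinder), whereas $\mathrm{T}(\theta;r)$ encodes the far-field behaviour of $\theta$; no Poincar\'e or interpolation argument can bound the latter by the former at a single scale. The hypothesis \eqref{e:parameter} is not a device for controlling the Gagliardo tail by $\mathrm{B}$; it is a scaling condition guaranteeing that the tail-vs-tail comparison in the iteration loses a positive power of $\rho$, so that after taking $\limsup$ the tail term can be absorbed. To repair your argument you would need to replace the one-scale bound with the two-scale inequality above (or a variant), which is what the paper actually does.
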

\begin{proof} The lemma is a straightforward consequence of the following claim: if \eqref{e:parameter} holds, then for every $ \rho \in (0, \frac{1}{4})$ small enough, there exists $C=C(\rho, p, \sigma, \alpha) \geq 1$ such that 
\begin{equation}\label{eq:reduction}
\mathrm{T}(\theta; x,t,\rho r) \leq \frac{1}{2} \,\mathrm{T}(\theta; x,t,r) + C\, \mathrm{B}(\theta; x,t,r) \,.
\end{equation}
By translation and scaling invariance, it is enough to prove \eqref{eq:reduction} for $(x,t)=(0,0)$ and $r=1$.  We estimate for fixed time $s$ and fixed center $z$ by the triangular inequality and the Poincar\'e inequality  \eqref{eq:Poincare} 
\begin{align}
\Big( \mean{B_R(z)} \lvert \theta(z', s) - [\theta(s)]_{B_\frac{\rho}{4}(z) }\rvert^\frac{3}{2} \, dz' \Big)^\frac{2p}{3(1+\alpha)} &\leq 2^{\frac{p}{1+\alpha}-1} \Big( \mean{B_R(z)} \lvert \theta(z',s)- [\theta(s)]_{B_\frac{1}{4}(z) }\rvert^\frac{3}{2} \, dz' \Big)^\frac{2p}{3(1+\alpha)} \nonumber \\
&+ C \rho^{-\frac{4p}{3(1+\alpha)}} \norm{ \theta(s)}_{L^\infty(\R^2)}^{\frac{p}{1+\alpha}-2} \int_{B_{1/2}^*(z)} y^b \lvert \overline \nabla \theta^* \rvert^2 \, dz' \, dy\,. \label{eq:noname}
\end{align}
If  $\frac{\rho}{4} \leq R \leq \frac{1}{4}\,,$  also the first term on the right-hand side can be estimated by the term \eqref{eq:noname}. Observe that if $\lvert z \rvert \leq \frac{3}{4} K_q(u; \rho) \rho^{2\alpha - 2/q}\,,$ it holds in particular that 
\begin{equation}
\lvert z \rvert + \frac{1}{2} \leq \frac{3}{4} K_q(u; \rho) + \frac{1}{4}K_q(u; 1) \leq K_q(u;1)
\end{equation}
by the definition of $K_q$ in \eqref{def:Kq}. In other words, $B_{1/2}(z) \subseteq B_{K_q(u;1)}\,.$ Splitting thus the supremum over $ \{ R \geq \frac{1}{4} \}$ and $\{ \frac{\rho}{4} \leq R \leq \frac{1}{4} \}$, this shows that 
\begin{equation}
\mathrm{T}(\theta; \rho) \leq 2^{\frac{p}{1+\alpha}-1} \rho^{(\sigma-(1-2\alpha)) \frac{p}{1+\alpha}- 2\alpha} \,  \mathrm{T}(\theta; 1) + C  \rho^{-\frac{4p}{3(1+\alpha)}} \, \mathrm{B}(\theta; 1) 
\end{equation}
with $C=C(\alpha, p)\geq1\,.$ Thus choosing $\rho$ sufficiently small, we reach \eqref{eq:reduction}.
\end{proof}

\begin{proof}[Proof of Theorem~\ref{thm:running}] By translation invariance, we may assume that $(x,t)=(0,0)\,.$ Let $\alpha \in (\frac{1}{\sqrt{6}}, \frac{1}{2})\,,$ $q\geq 20$ and set $p:= \frac{1+\alpha}{\alpha} + \frac{1}{q}$ as in the statement. Furthermore, we set 
\begin{equation}\label{def:parameters}
\sigma:= 2\alpha - \frac{1}{q} \quad \text{ and } \quad  \gamma:= \sigma - \frac{2\alpha^2}{1+\alpha} \,.
\end{equation}
This is an admissible choice of parameters for Proposition \ref{prop:ex-dec} and we denote by $\delta'$ the smallness requirement and by $\mu$ the rescaling parameter coming from the proposition for this choice of parameters. Let $(\theta, u)$ be a suitable Leray-Hopf solution of \eqref{eq:SQG}--\eqref{eq:u} such that 
\begin{equation}\label{e:assum}
\limsup_{r \to 0} \mathrm{B}(\theta;r) < \delta
\end{equation}
for some $\delta \in (0, \delta')$ yet to be chosen even smaller. From \eqref{e:assum} and Lemma \ref{lem:limsup}, we deduce that there exists an $r_0 \in (0,1)$, depending on everything, such that $ \mathrm{B}(\theta;r) + \mathrm{ T}(\theta; r)  < 2 C \delta$ for all $r \in (0, r_0]\,$ where $C=C(\alpha, p, \sigma)$ is given by the lemma. Hence, up to considering the rescaled solution $\theta_{r_0/4}$ instead of $\theta\,,$ we may assume without loss of generality that 
\begin{equation}\label{e:assumbetter}
\mathrm{B}(\theta;r) + \mathrm{ T}(\theta; r)  < 2 C \delta  \quad \text{ for all } 0< r \leq 4 \,
\end{equation}
and that $\theta$ is a suitable Leray-Hopf solution on $\R^2 \times [-4^{2\alpha}, 4^{2\alpha}]\,.$ In view of Lemma \ref{lem:boundcurlybt}, we can therefore ensure, with an appropriate choice of $\delta= \delta(\alpha, p, \sigma)$, that
\begin{equation}\label{e:finalclaim}
\sup_{ (x,t) \in Q_{\mu^{j_0}}^+} \left\{ \mathcal{B}(\theta_j^{x,t, \mu}; 1/2) + \mathcal{T}_{\frac{p}{1+\alpha}} (\theta_j^{x,t, \mu};1/2) \right\} < \delta' \quad \text{ for all } j_0 \geq 0 \,.
\end{equation}
In particular, \eqref{hyp:ex-dec} holds and by Proposition \ref{prop:ex-dec}, $\theta$ is $C^\beta$-H\"older continuous in space, uniformly in time, a neighbourhood of $(0,0)$ in spacetime, where
$$\beta:= \gamma - \frac{1}{p-1} \Big[ 1-2\alpha +\frac{2\alpha}{p} \Big]  >1-2\alpha\,.$$
In particular, $(0,0)$ is a regular point. 
\end{proof}

\section{Covering argument: Proof of Theorem~\ref{thm:main}}\label{s:covering}

\begin{proof}[Proof of Theorem \ref{thm:main}] We show that for every $t>0$ and every $q\geq 20$, we have
\begin{equation}\label{finalgoal}
\mathcal{H}^{\beta} \big({\rm Sing} \, \theta \cap (\R^2 \times [t, \infty))\big)<\infty\,,
\end{equation}
where 
\begin{equation}
\beta= \beta(q):= \frac{1}{2\alpha-2/q} \Big[\frac{1}{\alpha} + \frac{1+\alpha}{q}(1-2\alpha) \Big]\,.
\end{equation}
The theorem follows by letting $q \to \infty$ and by writing ${\rm Sing } \, \theta = \bigcup_{n \in \N} {\rm Sing } \, \theta \cap (\R^2 \times [1/n, + \infty))\,.$

For the rest of the proof, we fix $q \geq 20$ and $t>0$, and we set $\mathcal{S}:={\rm Sing } \, \theta \cap (\R^2 \times [t, \infty))\,.$ Theorem \ref{thm:running} and Remark~\ref{rmk:strangegeometry}  give a necessary condition in order to have  $(x,s) \in \mathcal{S}\,,$ namely 
\begin{equation}\label{eq:singsetcond}
 \frac{1}{r^{(2\alpha - 2/q)\beta}}  \int_0^r \int_{B_{ r^{2\alpha - 2/q}}(x, s)} y^b \lvert \overline \nabla \theta^* \rvert^2 \, dz \, d\tau \, d y > \frac{\delta}{2^\beta M}:=\delta_{0}\,,
\end{equation}
for a sequence of radii $r \to 0$ where $M=M(\alpha, q, \norm{\theta_0}_{L^2}, t)\geq 1$.
For every $\varepsilon>0$ fixed, by Vitali's covering Lemma, there exists a countable family of disjoint balls $\{ B^i=B_{r_i^{2\alpha-2/q}}(x_i, s_i) \}_{i \in I}$ in spacetime such that $\diam B^i \leq \varepsilon$,  \eqref{eq:singsetcond} holds and yet 
$\mathcal{S} \subseteq \bigcup_{i \in I}  5 B^i\,.$ Hence, we have
\begin{align}
\mathcal{H}_{10 \, \varepsilon}^\beta (\mathcal{S})  \leq \sum_{i \in I} (10 \, r_i^{2\alpha-2/q})^\beta &\leq 10^\beta \delta_0^{-1} \sum_{i \in I} \int_0^{r_i} \int_{B_{r_i^{2\alpha-2/q}}(x_i, s_i)} y^b \lvert \overline{\nabla} \theta^* \rvert^2 \, dz \, d \tau \, dy \\
&\leq 10^\beta \delta_0^{-1}  \int_0^\infty \int_{\R^2\times (0, \infty)} y^b \lvert \overline{\nabla} \theta^* \rvert^2 \, dz \, d\tau \, dy\,,
\end{align}

By the global control $\theta \in L^2((0, \infty),W^{\alpha,2}(\R^3))$ and Theorem \ref{thm:CF}, we conclude \eqref{finalgoal}.
\end{proof}

\subsection*{Acknowledgements} The authors have been supported by the SNF Grant 182565 ``Regularity issues for the Navier-
Stokes equations and for other variational problems". 

\bibliographystyle{plain}
\bibliography{SQG}
\end{document}